\numberwithin{equation}{section}
                        \theoremstyle{plain}
\newcommand\no[1]{}
\newtheorem{theorem}{Theorem}[section]
\newtheorem{lemma}[theorem]{Lemma}
\theoremstyle{definition}
\newtheorem{definition}[theorem]{Definition}
\newcommand{\BC}{\mathbb C}
\newcommand{\BZ}{\mathbb Z}
\newcommand{\BR}{\mathbb R}
\newcommand{\BT}{\mathbb T}
\newcommand{\BD}{\mathbb D}
\tikzstyle directed=[postaction={decorate,decoration={markings,
    mark=at position #1 with {\arrow{>}}}}]
\tikzstyle rdirected=[postaction={decorate,decoration={markings,
    mark=at position #1 with {\arrow{<}}}}]
\tikzset{anchorbase/.style={baseline={([yshift=-0.5ex]current bounding box.center)}}}
\tikzset{
    partial ellipse/.style args={#1:#2:#3}{
        insert path={+ (#1:#3) arc (#1:#2:#3)}
    }
}
\newcommand{\be} { \begin{equation} }
\newcommand{\ee} { \end{equation} }
\def\l@subsection{\@tocline{2}{0pt}{2pc}{6pc}{}} \makeatother
\begin{document}

\title[The $\mathfrak{gl}_2$-Skein Module of Lens Spaces via the Torus and Solid Torus]
{The $\mathfrak{gl}_2$-Skein Module of Lens Spaces via the Torus and Solid Torus}

\author{Hoang-An Nguyen}  
\address{Department of Mathematics, The University of Iowa, 
Iowa City, Iowa 52242, USA}
\email{hnguyen19@uiowa.edu}


\begin{abstract}
We compute the action of the $\mathfrak{gl}_2$-skein algebra of the torus on the $\mathfrak{gl}_2$-skein module of the solid torus. As a result, we show that the $\mathfrak{gl}_2$-skein modules of lens spaces is spanned by $\left(\left\lfloor{\frac{p}{2}}\right \rfloor+1\right)\left(2\left\lfloor{\frac{p}{2}}\right \rfloor+1\right)$ elements.
\end{abstract}

\thanks{2000 {\it Mathematics Subject Classification}. Primary 57M27, Secondary 57M25.}

\thanks{{\it Key words and phrases.\/} .}

\maketitle



\tableofcontents

    
    
    
    
    
    

\section{Introduction}
\subsection{Motivation}

\subsubsection{Kauffman Bracket Skein Module, Torus, Solid Torus, Lens Spaces}

A fundamental invariant of oriented $3$-manifolds $M$ emerging from quantum topology is the "Kauffman bracket skein module" $S_A(M)$ introduced by \cite{Prz91, Tur88}. This is defined as the $\BC[A,A^{-1}]$-module formally spanned by all framed links in $M$, modulo isotopy equivalence and the linear relations
\begin{center}
\begin{tikzpicture}[anchorbase, scale=.5]
\draw [very thick] (2,1) to [out=180,in=0] (0,0);
 \draw [white,line width=.15cm] (2,0) to [out=180,in=0] (0,1) ;
\draw [very thick] (2,0) to [out=180,in=0] (0,1);
\end{tikzpicture} 
\;
$-$
\; A \;
\begin{tikzpicture}[anchorbase, scale=.5]
\draw [very thick] (2,1) to (0,1);
\draw [very thick] (2,0) to (0,0);
\end{tikzpicture}
\;$-$\;
$A^{-1}$ \;
\begin{tikzpicture}[anchorbase, scale=.5]
\draw [very thick] (2,0) to[out=180,in=315] (1.3,.5);
\draw [very thick] (2,1) to[out=180,in=45] (1.3,.5);
\draw [very thick] (.7,.5) to[out=135,in=0]  (0,1);
\draw [very thick] (.7,.5) to[out=225,in=0] (0,0);
\end{tikzpicture}
\end{center}
\begin{center}
\begin{tikzpicture}[fill opacity=.2,anchorbase,scale=.3]
\draw[very thick] (1,0) to [out=0,in=270] (2,1) to [out=90,in=0] (1,2)to [out=180,in=90] (0,1)to [out=270,in=180] (1,0);
\end{tikzpicture}    
\; $+$\; $(A^2+A^{-2})\emptyset$,
\end{center}
where the links in each expression are identical except in a small ball in which they look like the picture above.

In the case that $M=F\times [0,1]$, a surface cross an interval, the Kauffman bracket skein module $S_A(F\times I)$ (often denoted as $S_A(F)$) has an algebra structure via stacking of skeins on top of one another using the interval part of the manifold, i.e. $s*s'$ is the result of placing $s$ atop of $s'$.

In \cite{FG00} they study the Kauffman bracket skein algebra of the torus and are able to give an explicit presentation of the algebra. Using this presentation they study the skein module of the solid torus as a module over the skein algebra of the torus and are able to describe its action. Such an action is possible because the boundary of the solid torus is the torus, this action is defined in Section \ref{secAct}.

After this they are able to find a finite spanning set for the Kauffman bracket skein modules of lens spaces. This is done by expressing the Kauffman bracket skein module of a lens space as
$$S_A(L(p,q))= S_A(S^1\times \BD)\bigotimes_{S_A(\BT)}S_A(S^1\times 
\BD).$$
Here $\BD$ denotes the closed unit disk, $S^1$ denotes the circle, and $\BT:=S^1\times S^1$ denotes the torus.

\subsubsection{Extending results to the case of $\mathfrak{gl}_2$}
In this paper, we follow \cite{FG00} but in the case of the $\mathfrak{gl}_2$-skein algebra and arrive at analogous results. Namely, we compute the action of the $\mathfrak{gl}_2$-skein algebra of the torus $G_t(\BT)$ on the $\mathfrak{gl}_2$-skein module of the solid torus $G_t(S^1\times \BD)$. And as a result of this, we are also able to find a finite spanning set for $\mathfrak{gl}_2$-skein modules of lens spaces anaologous to the one found in \cite{FG00}.

The $\mathfrak{gl}_2$-skein algebra was introduced in \cite{QW18} and can be described in analogy to the Kauffman bracket skein module. Except it now consists of webs with labelled edges and crossing relations such as\\
\begin{gather}
\nonumber
\begin{tikzpicture}[anchorbase, scale=.5]
\draw [very thick, ->] (2,1) to [out=180,in=0] (0,0);
 \draw [white,line width=.15cm] (2,0) to [out=180,in=0] (0,1) ;
\draw [very thick, ->] (2,0) to [out=180,in=0] (0,1);
\end{tikzpicture}
 \;\;:= \;\;
\begin{tikzpicture}[anchorbase, scale=.5]
\draw [very thick, ->] (2,1) to (0,1);
\draw [very thick, ->] (2,0) to (0,0);
\end{tikzpicture}
\;\;-\;\;
 t\;
\begin{tikzpicture}[anchorbase, scale=.5]
\draw [very thick] (2,0) to[out=180,in=315] (1.3,.5);
\draw [very thick] (2,1) to[out=180,in=45] (1.3,.5);
\draw [double] (1.3,.5) -- (.7,.5);
\draw [very thick, ->] (.7,.5) to[out=135,in=0]  (0,1);
\draw [very thick, ->] (.7,.5) to[out=225,in=0] (0,0);
\end{tikzpicture}
\\ \nonumber
\begin{tikzpicture}[anchorbase, scale=.5]
\draw [very thick, ->] (2,0) to [out=180,in=0] (0,1);
 \draw [white,line width=.15cm] (2,1) to [out=180,in=0] (0,0) ;
\draw [double, ->] (2,1) to [out=180,in=0] (0,0);
\end{tikzpicture}
\;\;:= \;\;
-t^{-1}   \;
\begin{tikzpicture}[anchorbase, scale=.5]
\draw [double] (2,1) -- (1.4,1);
\draw [very thick, ->] (1.4,1) -- (0,1);
\draw [very thick] (2,0) -- (.6,0);
\draw [double, ->] (0.6,0) -- (0,0);
\draw [very thick] (.6,0) -- (1.4,1);
\end{tikzpicture} 
\quad,\quad
\begin{tikzpicture}[anchorbase, scale=.5]
\draw [double, ->] (2,0) to [out=180,in=0] (0,1);
 \draw [white,line width=.15cm] (2,1) to [out=180,in=0] (0,0) ;
\draw [double, ->] (2,1) to [out=180,in=0] (0,0);
\end{tikzpicture}
\;\;:= \;\; t^{-2} \;
\begin{tikzpicture}[anchorbase, scale=.5]
\draw [double, ->] (2,0) -- (0,0);
\draw [double, ->] (2,1) -- (0,1);
\end{tikzpicture}
\quad .
\end{gather}\\
\noindent
The full definition of $\mathfrak{gl_2}$-skein modules can be found in Section \ref{secgl2defns}. It should be noted that in \cite{QW18} they were able to find an explicit presentation for $G_t(\BT)$ in analogy to the presentation for the Kauffman bracket skein algebra of the torus found in \cite{FG00}. Namely, they find a basis consisting of elements $(m,n)_T$ and $\wedge^{(r,s)}$ which can be visualised by labelled multicurves on the torus. And most importantly, they find the relation 
$$(m,n)_T*(r,s)_T=(m+r,n+s)_T+(m-r,n-s)_T*\wedge^{(r,s)}$$
which allows us to do computations. This formula is called the Frohman-Gelca formula in \cite{QW18}.
See Section \ref{secgl2defns} for full details.

It may also be possible to view the $\mathfrak{gl_2}$-skein module in the framework of factorization homology following \cite{BBJ18}. This was explicitly done for the Kauffman bracket skein module in \cite{Coo19}.

\subsection{Main Results}
We now state the main results of the paper. Whose proofs are shown in Section \ref{secProofs}.
In the theorems below, $(m,n)_T$ and $\wedge^{(r,s)}$ are basis elements of $G_t(\BT)$ and can be visualised as labelled multicurves on the torus. Similarly, $(k)_T$ and $\wedge^{(l)}$ are basis elements of $G_t(S^1\times \BD)$ and can be visualised as multicurves inside the solid torus. For full details see Section \ref{secgl2defns}.

The first theorem gives an explicit formula describing the action of $\mathfrak{gl_2}$-skein algebra of the torus $G_t(\BT)$ on the $\mathfrak{gl_2}$-skein module of the solid torus $G_t(S^1\times \BD)$. 
\begin{theorem}\label{actthm}
(Action on Solid Torus)

\noindent
Let $x_{m,n}$ denote the image of $(m,n)_T\in G_t(\BT)$ under the projection map $\pi:G_t(\BT)\rightarrow G_t(S^1\times \BD)$, and $"\cdot"$ denote the canonical left action of $G_t(\BT)$ on $G_t(S^1\times \BD)$. We then have that  
\begin{align}\label{actform}
\begin{split}
(m,n)_T\cdot[(k)_T*\wedge^{(l)}]&=t^{-2nl} \wedge^{(l)}* x_{m+k,n}+t^{-2n(k+l)}\wedge^{(k+l)}* x_{m-k,n}\\
\wedge^{(m,n)}\cdot[(k)_T*\wedge^{(l)}]&=t^{-2n(m+k+2l)}(k)_T*\wedge^{(l+m)}.
\end{split}
\end{align}\vspace{1mm}
\end{theorem}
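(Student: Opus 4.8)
The plan is to deduce both identities from the Frohman-Gelca formula in $G_t(\BT)$ recalled above, together with the naturality of the projection $\pi$, and then to pin down the correction powers of $t$ coming from the oriented $\mathfrak{gl}_2$ crossing relations.

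First I would make precise the compatibility between the module action and the projection. By the definition of the canonical action (Section \ref{secAct}), $a\cdot z$ is obtained by pushing the boundary skein $a\in G_t(\BT)$ into a collar $\BT\times[0,1]$ of $\partial(S^1\times\BD)$ and laying it atop $z\in G_t(S^1\times\BD)$. Stacking inside the collar and then including into the solid torus is the same as stacking on the torus and then pushing the result inward, so the action satisfies $a\cdot \pi(b)=\pi(a*b)$ for all $a,b\in G_t(\BT)$, where $*$ is the torus product. Since $\pi$ is surjective and each generator $(k)_T*\wedge^{(l)}$ of $G_t(S^1\times\BD)$ is the image under $\pi$ of an explicit torus skein supported in a neighborhood of the longitude, this reduces every instance of $a\cdot[(k)_T*\wedge^{(l)}]$ to computing a single product in $G_t(\BT)$ followed by $\pi$.

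For the first identity I would write $(k)_T*\wedge^{(l)}=\pi(w)$ for the corresponding longitudinal torus skein $w$ and apply the Frohman-Gelca formula to the single-curve factor of $(m,n)_T*w$. This produces precisely the two terms $(m+k,n)_T$ and $(m-k,n)_T*\wedge^{(k,0)}$ dictated by the product-to-sum rule, with the double strand $\wedge^{(l)}$ riding along unchanged. Applying $\pi$ sends $(m\pm k,n)_T\mapsto x_{m\pm k,n}$, collapses the torus $\wedge$-classes to their longitudinal labels, and fuses the two double strands in the second term via $\wedge^{(k)}*\wedge^{(l)}=\wedge^{(k+l)}$; reordering each $\wedge$ into the position displayed in \eqref{actform} is what produces the scalars $t^{-2nl}$ and $t^{-2n(k+l)}$. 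For the second identity the same mechanism applies, but the diagram is simpler: the pushed-in double curve $\wedge^{(m,n)}$ does not split, it merely slides past $(k)_T$ and fuses with $\wedge^{(l)}$ to give $\wedge^{(l+m)}$, leaving the single term with scalar $t^{-2n(m+k+2l)}$.

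The main obstacle is the bookkeeping of the powers of $t$, which has no counterpart in the symmetric Kauffman bracket setting of \cite{FG00}. Because the webs are oriented and labeled, sliding a slope-$(m,n)$ curve of meridional winding $n$ past a single core strand, past the doubled strand $\wedge^{(l)}$ (which must be counted with weight two), or past the doubled strand it fuses into, each contributes a framing/crossing factor governed by the $\mathfrak{gl}_2$ crossing relations of the introduction. I would isolate these as a single commutation lemma, computing the scalar incurred when a curve of meridional winding $n$ is transported past a strand of prescribed label, and then assemble the exponents $-2nl$, $-2n(k+l)$, and $-2n(m+k+2l)$ by summing the contributions of the single and double strands present; verifying that these sums come out exactly as stated is the delicate step.
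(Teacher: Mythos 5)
Your proposal matches the paper's proof in all essentials: both compute $(m,n)_T*[(k,0)_T*\wedge^{(l,0)}]$ (resp.\ $\wedge^{(m,n)}*[(k,0)_T*\wedge^{(l,0)}]$) in $G_t(\BT)$ via the Frohman--Gelca formula, push the result into the solid torus using the compatibility $\pi(A*B)=A\cdot\pi(B)$, and identify the action of a purely longitudinal boundary skein with the product in $G_t(S^1\times \BD)$. The one place you overestimate the work is the ``delicate'' commutation lemma: no new diagrammatic computation is needed, since the scalars $t^{-2nl}$, $t^{-2n(k+l)}$ and $t^{-2n(m+k+2l)}$ fall out of the already-quoted presentation relations $(m,n)_T*\wedge^{(r,s)}=t^{2(ms-nr)}\wedge^{(r,s)}*(m,n)_T$ and $\wedge^{(m,n)}*\wedge^{(r,s)}=t^{2(ms-nr)}\wedge^{(m+r,n+s)}$, together with the projection formula $\pi(\wedge^{(r,s)})=t^{-2rs}\wedge^{(r)}$.
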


The following theorem gives a spanning set for $\mathfrak{gl_2}$-skein modules of lens spaces when viewed as the tensor product $G_T(S^1\times \BD)\otimes_{G_t(\BT)}G_t(S^1\times \BD)$

\begin{theorem}\label{lenthm}
(Spanning set for $\mathfrak{gl_2}$-skein Module of Lens Spaces)

\noindent
For the lens space $L(p,q)$, we have that $G_t(L(p,q))$ as a $\BC(t)$-vector space is spanned by the elements
$$\{(n)*\wedge^{(m)}\otimes 1\},$$
where $n\in\{0,\dots,\left\lfloor{\frac{p}{2}}\right \rfloor\}$ and $m\in \{-\left\lfloor{\frac{p}{2}}\right \rfloor,\dots ,\left\lfloor{\frac{p}{2}}\right \rfloor\}$ with $\left\lfloor{\frac{p}{2}}\right \rfloor$ meaning the greatest integer less than or equal to $\frac{p}{2}$.
\end{theorem}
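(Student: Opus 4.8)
The plan is to follow the strategy of Frohman--Gelca \cite{FG00}, using the explicit action of Theorem \ref{actthm} as the computational engine. Write $L(p,q)=V_1\cup_h V_2$ as two solid tori glued along $\BT=\partial V_1$ by the homeomorphism $h$ defining the lens space, so that $G_t(L(p,q))=G_t(V_1)\otimes_{G_t(\BT)}G_t(V_2)$, where $G_t(\BT)$ acts on the second factor through $h$. The first step is to show that $G_t(S^1\times\BD)$ is a cyclic $G_t(\BT)$-module generated by the empty skein $1$: setting $k=l=0$ in Theorem \ref{actthm} gives $\wedge^{(m,0)}\cdot 1=\wedge^{(m)}$, and iterating the action of the $(k,0)_T$ produces every basis element $(k)_T*\wedge^{(l)}$ modulo lower-order terms, so the action is surjective. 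Writing any element of the second factor as $\alpha\cdot 1$ with $\alpha\in G_t(\BT)$ and pushing $\alpha$ across the balanced tensor product, every class in $G_t(L(p,q))$ becomes a combination of elements $(n)*\wedge^{(m)}\otimes 1$.

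The engine for cutting the indices down is the observation that the meridian $\mu_2$ of $V_2$ bounds a disk. Colored by the standard representation it is a single curve, hence equals some $(a,b)_T\in G_t(\BT)$ with longitudinal winding $p$ (after transport through $h$), and it acts on $1\in G_t(V_2)$ as multiplication by the quantum dimension scalar; colored by the determinant representation it is the curve $\wedge^{(a,b)}$ acting as the scalar value of a $\det$-colored unknot. Hence, in the tensor product, both $(a,b)_T$ and $\wedge^{(a,b)}$ act on $w\otimes 1$ by a scalar. Equating these with the values computed from the two formulas of Theorem \ref{actthm} yields the reduction relations.

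Carrying out the reductions: the second formula gives $\wedge^{(a,b)}\cdot[(k)_T*\wedge^{(l)}]=t^{\bullet}(k)_T*\wedge^{(l+a)}$, so the trivializing $\det$-meridian identifies $\wedge^{(l+p)}$ with $\wedge^{(l)}$ up to a unit; thus the determinant index $m$ is periodic of period $p$ and can be moved into the symmetric window $\{-\lfloor p/2\rfloor,\dots,\lfloor p/2\rfloor\}$, giving $2\lfloor p/2\rfloor+1$ values. The first formula contains the symmetric pair $x_{a+k,b}$ and $x_{a-k,b}$; after re-expressing the $x_{\bullet,\bullet}$ in the basis $\{(k)_T*\wedge^{(l)}\}$, this produces both a period-$p$ shift and a reflection $n\leftrightarrow -n$ of the core index, so $n$ can be brought into $\{0,\dots,\lfloor p/2\rfloor\}$, giving $\lfloor p/2\rfloor+1$ values. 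Combining the two reductions yields the spanning set of size $(\lfloor p/2\rfloor+1)(2\lfloor p/2\rfloor+1)$ claimed in Theorem \ref{lenthm}.

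The main obstacle will be the last paragraph: converting the two scalar relations into the exact ranges, and in particular handling the coupling visible in the first formula, where the reflection $n\to -n$ is accompanied by the shift $\wedge^{(l)}\to\wedge^{(k+l)}$, so that reducing the core index perturbs the determinant index. Checking that this coupling closes up consistently on the stated windows --- while keeping track of the powers of $t$ from Theorem \ref{actthm} --- together with pinning down the exact curve $h(\mu_2)=(a,b)_T$ and its scalar, is where the real work lies; the cyclicity and the reduction to the form $w\otimes 1$ are routine by comparison.
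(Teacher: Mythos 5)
Your overall strategy is the one the paper itself follows (both descend from Frohman--Gelca): realize $G_t(L(p,q))$ as $G_t(S^1\times\BD)\otimes_{G_t(\BT)}G_t(S^1\times\BD)$, observe that the elements $(n)*\wedge^{(m)}\otimes 1$ already span, and then exploit the relations forced by the gluing --- curves that become meridional, or acquire only framing twists, in the far solid torus act by scalars --- to push the indices into the stated windows. The cyclicity/balancing step and the identification of the ``engine'' are correct. The problem is that the proposal stops exactly where the proof begins: the coupled reduction you flag in your last paragraph as ``where the real work lies'' is the entire content of the paper's Section \ref{secProofs}, and it is not routine. Two specific pieces are missing. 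First, a termination argument: reducing the core index by $p$ via the product-to-sum formula does not simply replace $(n)$ by $(n\pm p)$; it produces the pair $x_{\bullet+k,\bullet}$, $x_{\bullet-k,\bullet}$ together with shifted $\wedge$'s, and re-expanding each $x_{m,n}$ in the basis $(i)*\wedge^{(j)}$ creates a tail of new terms. The paper controls this with Lemma \ref{lexpression}, which needs two inputs you never supply: $\gcd(a,p)=1$, so every integer is of the form $ma+kp$ (Lemma \ref{coprime}), and the triangularity of $x_{m,n}=\sum a_{ij}(i)*\wedge^{(j)}$ with leading term $(m)$ and all other $i$ strictly between $-m$ and $m$ (Lemma \ref{Highestpower}); only then does the recursion strictly decrease and terminate. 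Second, the induction on $m$ (base cases $m=0,1$, two-step inductive step via the Frohman--Gelca formula) showing that every $x_{am+kp,bm+kq}*y_{ar+ps,br+qs}\otimes 1$ lands in the window $-\lfloor p/2\rfloor\leq i,j\leq\lfloor p/2\rfloor$; this is Lemma \ref{mainlemma}, and it requires the absolute-least-remainder bound $|ra+ps_0|\leq\lfloor p/2\rfloor$ (Lemma \ref{RazvanNumbers}) to land in the claimed range rather than merely in some window of length $p$.

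A smaller but genuine point: the reflection that brings the core index from $\{-\lfloor p/2\rfloor,\dots,\lfloor p/2\rfloor\}$ down to $\{0,\dots,\lfloor p/2\rfloor\}$ is not obtained from the symmetric pair $x_{m+k,n}$, $x_{m-k,n}$ in the first formula of Theorem \ref{actthm}, as you propose. In this oriented setting it comes from the orientation-reversal relation $(k)*\wedge^{(-k)}=(-k)$ in the solid torus (equation \ref{revorien2}), which gives $(-k)*\wedge^{(j)}\otimes 1=(k)*\wedge^{(j-k)}\otimes 1$, followed by one more $p$-periodic reduction of the $\wedge$-index via Lemma \ref{lredy}. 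This is the step that accounts for the asymmetric ranges of $n$ and $m$ in the statement, and it should be made explicit rather than folded into the action formula.
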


\subsection{Outline} The paper is outlined as follows.\\

In Section \ref{secgl2defns} we go over prerequisite background. More specifically, we recall definitions of $G_t(M)$. After this we then look at the case of a 3-manifold $M$ with boundary $\partial M$, and look at the corresponding action of $G_t(\partial M)$ on $G_t(M)$. We then focus to the case of the solid torus and the torus.

In Section \ref{secAct} we find formulas for the projection of elements in $G_t(\BT)$ to $G_t(S^1\times \BD)$, which are used in the proof of Theorem \ref{actthm}.

In Section \ref{secLens} we recall Heegaard splittings of 3-manifolds and how this interacts with skein theory. We then look specifically at Heegaard splittings of lens spaces and its corresponding $\mathfrak{gl_2}$-skein module.

In Section \ref{secProofs} we establish the main lemmas \ref{mainlemma} and \ref{lexpression}. And then use these lemmas to finally prove Theorem \ref{lenthm}.

\section*{Acknowledgements} 
The author would like to thank Charles Frohman and Ben Cooper for their patient support and guidance while working on this problem. And especially so, to Ben Cooper for his time and advice while the author was making revisions to drafts of this paper.

\section{Preliminary Material for $\mathfrak{gl_2}$-Skein Theory}
\label{secgl2defns}
\subsection{$\mathfrak{gl_2}$-Skein Modules and Algebras} We introduce the definitions regarding the $\mathfrak{gl_2}$-skein module. Here we will be following \cite{QW18, FS20}.

\begin{definition} 
A \textbf{marked surface} is an oriented surface $F$ together with a finite subset $P$ of $\partial F$ ($\partial F$ may be empty) of \textbf{marked points}.

Let $I=[0,1]$. A $\mathfrak{gl}_2$-\textbf{web} in $(F,P)\times I$ consists of
\begin{itemize}
\item 1-valent external vertices in fibers $\{p\}\times I$ over (some of) the marked points $p\in P$
    
\item(Points in the middle) 3-valent internal vertices of the form
\begin{center}
\begin{tikzpicture}[anchorbase, scale=.5]
\draw [very thick] (1.5,1) -- (1,2);
\draw [very thick] (1.5,1) -- (2,2);
\draw [very thick, ->] (1.5,1) -- (1.1,1.8);
\draw [very thick, ->] (1.5,1) -- (1.9,1.8);
\draw [double] (1.5,0) -- (1.5,1);
\draw [double, ->] (1.5,0) -- (1.5,.6);
\end{tikzpicture},\quad
\begin{tikzpicture}[anchorbase, scale=.5]
\draw [very thick] (1,0) -- (1.5,1);
\draw [very thick] (2,0) -- (1.5,1);
\draw [very thick, ->] (1,0) -- (1.25,.5);
\draw [very thick, ->] (2,0) -- (1.75,.5);
\draw [double] (1.5,1) -- (1.5,2);
\draw [double, ->] (1.5,1) -- (1.5,1.75);
\end{tikzpicture}
\end{center}
    
\item 1-labelled oriented edges
\begin{center}
\begin{tikzpicture}[anchorbase, scale=.5]
\draw [very thick] (1.5,1) -- (1.5,2);
\draw [very thick, ->] (1.5,1) -- (1.5,1.75);
\end{tikzpicture}    
\end{center}

\item 2-labelled oriented edges
\begin{center}
\begin{tikzpicture}[anchorbase, scale=.5]
\draw [double] (1.5,1) -- (1.5,2);
\draw [double, ->] (1.5,1) -- (1.5,1.75);
\end{tikzpicture}    
\end{center}

\item 1-labelled oriented loops
\begin{center}
\begin{tikzpicture}[fill opacity=.2,anchorbase,scale=.3]
\draw[very thick, directed=.55] (1,0) to [out=0,in=270] (2,1) to [out=90,in=0] (1,2)to [out=180,in=90] (0,1)to [out=270,in=180] (1,0);
\end{tikzpicture},\quad
\begin{tikzpicture}[fill opacity=.2,anchorbase,scale=.3]
\draw[very thick, rdirected=.55] (1,0) to [out=0,in=270] (2,1) to [out=90,in=0] (1,2)to [out=180,in=90] (0,1)to [out=270,in=180] (1,0);
\end{tikzpicture}
\end{center}

\item 2-labelled oriented loops
\begin{center}
\begin{tikzpicture}[fill opacity=.2,anchorbase,scale=.3]
\draw[double, directed=.55] (1,0) to [out=0,in=270] (2,1) to [out=90,in=0] (1,2)to [out=180,in=90] (0,1)to [out=270,in=180] (1,0);
\end{tikzpicture},\quad
\begin{tikzpicture}[fill opacity=.2,anchorbase,scale=.3]
\draw[double, rdirected=.55] (1,0) to [out=0,in=270] (2,1) to [out=90,in=0] (1,2)to [out=180,in=90] (0,1)to [out=270,in=180] (1,0);
\end{tikzpicture}
\end{center}

\item 
A framing, which can be formally defined by
    \begin{itemize}
    \item a choice of a properly embedded oriented compact surface $S$ having the web as a spine, called framing, such that the projection on the first factor $p:F\times I\rightarrow F$ restricts to a local homeomorphism $p|_S:S\rightarrow F$.
    \item 
    $S\cap \partial F\times I$ is a collection of horizontal arcs, each of which containing a unique point of $M$.
    \end{itemize}
\end{itemize}
A web may be empty, $\emptyset$, or disconnected. Webs are considered up to isotopy.

A $\mathfrak{gl}_2$-\textbf{tangled web} is a $\mathfrak{gl}_2$-web with the additional information of 4-valent vertices of the form
\begin{center}
\begin{tikzpicture}[anchorbase, scale=.5]
\draw [very thick, ->] (2,1) to [out=180,in=0] (0,0);
 \draw [white,line width=.15cm] (2,0) to [out=180,in=0] (0,1) ;
\draw [very thick, ->] (2,0) to [out=180,in=0] (0,1);
\end{tikzpicture},\quad
\begin{tikzpicture}[anchorbase, scale=.5]
\draw [double, ->] (2,1) to [out=180,in=0] (0,0);
 \draw [white,line width=.15cm] (2,0) to [out=180,in=0] (0,1) ;
\draw [double, ->] (2,0) to [out=180,in=0] (0,1);
\end{tikzpicture},\quad
\begin{tikzpicture}[anchorbase, scale=.5]
\draw [double, ->] (2,1) to [out=180,in=0] (0,0);
 \draw [white,line width=.15cm] (2,0) to [out=180,in=0] (0,1) ;
\draw [very thick, ->] (2,0) to [out=180,in=0] (0,1);
\end{tikzpicture},\quad
\begin{tikzpicture}[anchorbase, scale=.5]
\draw [very thick, ->] (2,1) to [out=180,in=0] (0,0);
 \draw [white,line width=.15cm] (2,0) to [out=180,in=0] (0,1) ;
\draw [double, ->] (2,0) to [out=180,in=0] (0,1);
\end{tikzpicture}
\end{center}
\vspace{2mm}
\begin{center}
\begin{tikzpicture}[anchorbase, scale=.5]
\draw [very thick, ->] (2,0) to [out=180,in=0] (0,1);
 \draw [white,line width=.15cm] (2,1) to [out=180,in=0] (0,0) ;
\draw [very thick, ->] (2,1) to [out=180,in=0] (0,0);
\end{tikzpicture},\quad
\begin{tikzpicture}[anchorbase, scale=.5]
\draw [double, ->] (2,0) to [out=180,in=0] (0,1);
 \draw [white,line width=.15cm] (2,1) to [out=180,in=0] (0,0) ;
\draw [double, ->] (2,1) to [out=180,in=0] (0,0);
\end{tikzpicture},\quad
\begin{tikzpicture}[anchorbase, scale=.5]
\draw [double, ->] (2,0) to [out=180,in=0] (0,1);
 \draw [white,line width=.15cm] (2,1) to [out=180,in=0] (0,0) ;
\draw [very thick, ->] (2,1) to [out=180,in=0] (0,0);
\end{tikzpicture},\quad
\begin{tikzpicture}[anchorbase, scale=.5]
\draw [very thick, ->] (2,0) to [out=180,in=0] (0,1);
 \draw [white,line width=.15cm] (2,1) to [out=180,in=0] (0,0) ;
\draw [double, ->] (2,1) to [out=180,in=0] (0,0);
\end{tikzpicture}
\end{center}
\end{definition}

In this paper we consider consider oriented surfaces $\Sigma$ with no marked points. Namely, the webs have no 1-valent vertices, and hence do not start and end anywhere, as in the case of the original Kauffman bracket skein module.
\begin{definition}
We let $G_t(\Sigma)$, the $\mathfrak{gl_2}$-skein algebra of $\Sigma\times I$, denote the quotient of the free $\BZ[t^{\pm 1}]$-module spanned by isotopy classes of tangled $\mathfrak{gl}_2$-webs embedded in $F\times I$, by the ideal generated by local relations
\begin{gather}
\label{eqn:circles}
\begin{tikzpicture}[fill opacity=.2,anchorbase,scale=.3]
\draw[very thick, directed=.55] (1,0) to [out=0,in=270] (2,1) to [out=90,in=0] (1,2)to [out=180,in=90] (0,1)to [out=270,in=180] (1,0);
\end{tikzpicture} 
\quad=\quad
(t+ t^{-1}) \emptyset
\quad=\quad 
\begin{tikzpicture}[fill opacity=.2,anchorbase,scale=.3]
\draw[very thick, rdirected=.55] (1,0) to [out=0,in=270] (2,1) to [out=90,in=0] (1,2)to [out=180,in=90] (0,1)to [out=270,in=180] (1,0);
\end{tikzpicture}
\quad,\quad
\begin{tikzpicture}[fill opacity=.2,anchorbase,scale=.3]
\draw[double, directed=.55] (1,0) to [out=0,in=270] (2,1) to [out=90,in=0] (1,2)to [out=180,in=90] (0,1)to [out=270,in=180] (1,0);
\end{tikzpicture} 
\quad=\quad
\emptyset
\quad=\quad \begin{tikzpicture}[fill opacity=.2,anchorbase,scale=.3]
\draw[double, rdirected=.55] (1,0) to [out=0,in=270] (2,1) to [out=90,in=0] (1,2)to [out=180,in=90] (0,1)to [out=270,in=180] (1,0);
\end{tikzpicture}
\\
\label{eqn:digons}
\begin{tikzpicture}[anchorbase, scale=.5]
\draw [double] (.5,0) -- (.5,.3);
\draw [very thick] (.5,.3) .. controls (.4,.35) and (0,.6) .. (0,1) .. controls (0,1.4) and (.4,1.65) .. (.5,1.7);
\draw [very thick] (.5,.3) .. controls (.6,.35) and (1,.6) .. (1,1) .. controls (1,1.4) and (.6,1.65) .. (.5,1.7);
\draw [double, ->] (.5,1.7) -- (.5,2);
\end{tikzpicture}
\quad= \quad
(t+t^{-1})\;
\begin{tikzpicture}[anchorbase, scale=.5]
\draw [double,->] (.5,0) -- (.5,2);
\end{tikzpicture}
\quad,\quad
\begin{tikzpicture}[anchorbase, scale=.5]
\draw [very thick] (.5,0) -- (.5,.3);
\draw [very thick] (.5,.3) .. controls (.4,.35) and (0,.6) .. (0,1) .. controls (0,1.4) and (.4,1.65) .. (.5,1.7);
\draw [double, directed=0.55] (.5,.3) .. controls (.6,.35) and (1,.6) .. (1,1) .. controls (1,1.4) and (.6,1.65) .. (.5,1.7);
\draw [very thick, ->] (.5,1.7) -- (.5,2);
\end{tikzpicture}
\quad= \quad
\begin{tikzpicture}[anchorbase, scale=.5]
\draw [very thick,->] (.5,0) -- (.5,2);
\end{tikzpicture}
\quad= \quad
\begin{tikzpicture}[anchorbase, scale=.5]
\draw [very thick] (.5,0) -- (.5,.3);
\draw [double, directed=0.55] (.5,.3) .. controls (.4,.35) and (0,.6) .. (0,1) .. controls (0,1.4) and (.4,1.65) .. (.5,1.7);
\draw [very thick] (.5,.3) .. controls (.6,.35) and (1,.6) .. (1,1) .. controls (1,1.4) and (.6,1.65) .. (.5,1.7);
\draw [very thick, ->] (.5,1.7) -- (.5,2);
\end{tikzpicture}
\\
\label{eqn:squares}
\begin{tikzpicture}[anchorbase,scale=.5]
\draw [double] (0,0) -- (0,0.5);
\draw [very thick] (1,0) -- (1,.7);
\draw [very thick] (0,0.5) -- (1,.7);
\draw [double] (1,.7) -- (1,1.3);
\draw [very thick] (0,.5) -- (0,1.5);
\draw [very thick] (1,1.3) -- (0,1.5);
\draw [double,->] (0,1.5) -- (0,2);
\draw [very thick, ->] (1,1.3) -- (1,2);
\end{tikzpicture}
\quad = \quad
\begin{tikzpicture}[anchorbase,scale=.5]
\draw [double,->] (0,0) -- (0,2);
\draw [very thick,->] (1,0) -- (1,2);
\end{tikzpicture}
\quad,\quad
\begin{tikzpicture}[anchorbase,scale=.5]
\draw [double] (1,0) -- (1,0.5);
\draw [very thick] (0,0) -- (0,.7);
\draw [very thick] (1,0.5) -- (0,.7);
\draw [double] (0,.7) -- (0,1.3);
\draw [very thick] (1,.5) -- (1,1.5);
\draw [very thick] (0,1.3) -- (1,1.5);
\draw [double,->] (1,1.5) -- (1,2);
\draw [very thick, ->] (0,1.3) -- (0,2);
\end{tikzpicture}
\quad = \quad
\begin{tikzpicture}[anchorbase,scale=.5]
\draw [double,->] (1,0) -- (1,2);
\draw [very thick,->] (0,0) -- (0,2);
\end{tikzpicture}
\quad , \quad
\begin{tikzpicture}[anchorbase,scale=.5]
\draw [double,->] (0,0) to  (0,2);
\draw [double,->] (1,2) to (1,0);
\end{tikzpicture}
\quad =\quad
\begin{tikzpicture}[anchorbase,scale=.5]
\draw [double,->] (0,0) to (0,.5) to [out=90,in=90] (1,.5) to (1,0);
\draw [double,->] (1,2) to (1,1.5) to [out=270,in=270] (0,1.5) to (0,2);
\end{tikzpicture}
\quad , \quad
\begin{tikzpicture}[anchorbase,scale=.5]
\draw [double,<-] (0,0) to  (0,2);
\draw [double,<-] (1,2) to (1,0);
\end{tikzpicture}
\quad =\quad
\begin{tikzpicture}[anchorbase,scale=.5]
\draw [double,<-] (0,0) to (0,.5) to [out=90,in=90] (1,.5) to (1,0);
\draw [double,<-] (1,2) to (1,1.5) to [out=270,in=270] (0,1.5) to (0,2);
\end{tikzpicture}
\quad .
\end{gather}
and local crossing relations
\begin{gather}
\begin{tikzpicture}[anchorbase, scale=.5]
\draw [very thick, ->] (2,1) to [out=180,in=0] (0,0);
 \draw [white,line width=.15cm] (2,0) to [out=180,in=0] (0,1) ;
\draw [very thick, ->] (2,0) to [out=180,in=0] (0,1);
\end{tikzpicture}
 \;\;:= \;\;
\begin{tikzpicture}[anchorbase, scale=.5]
\draw [very thick, ->] (2,1) to (0,1);
\draw [very thick, ->] (2,0) to (0,0);
\end{tikzpicture}
\;\;-\;\;
 t\;
\begin{tikzpicture}[anchorbase, scale=.5]
\draw [very thick] (2,0) to[out=180,in=315] (1.3,.5);
\draw [very thick] (2,1) to[out=180,in=45] (1.3,.5);
\draw [double] (1.3,.5) -- (.7,.5);
\draw [very thick, ->] (.7,.5) to[out=135,in=0]  (0,1);
\draw [very thick, ->] (.7,.5) to[out=225,in=0] (0,0);
\end{tikzpicture}
\quad,\quad
\begin{tikzpicture}[anchorbase, scale=.5]
\draw [very thick, ->] (2,0) to [out=180,in=0] (0,1);
 \draw [white,line width=.15cm] (2,1) to [out=180,in=0] (0,0) ;
\draw [very thick, ->] (2,1) to [out=180,in=0] (0,0);
\end{tikzpicture}
\;\;:=\;\;
\begin{tikzpicture}[anchorbase, scale=.5]
\draw [very thick, ->] (2,1) to (0,1);
\draw [very thick, ->] (2,0) to (0,0);
\end{tikzpicture}
\;\;-\;\;
t^{-1}   \;
\begin{tikzpicture}[anchorbase, scale=.5]
\draw [very thick] (2,0) to[out=180,in=315] (1.3,.5);
\draw [very thick] (2,1) to[out=180,in=45] (1.3,.5);
\draw [double] (1.3,.5) -- (.7,.5);
\draw [very thick, ->] (.7,.5) to[out=135,in=0]  (0,1);
\draw [very thick, ->] (.7,.5) to[out=225,in=0] (0,0);
\end{tikzpicture}
\nonumber
\\
\label{eq:crossing}
\begin{tikzpicture}[anchorbase, scale=.5]
\draw [very thick, ->] (2,1) to [out=180,in=0] (0,0);
 \draw [white,line width=.15cm] (2,0) to [out=180,in=0] (0,1) ;
\draw [double, ->] (2,0) to [out=180,in=0] (0,1);
\end{tikzpicture}
\;\;:= \;\;-t \;\;\,
\begin{tikzpicture}[anchorbase, scale=.5]
\draw [double] (2,0) -- (1.4,0);
\draw [very thick, ->] (1.4,0) -- (0,0);
\draw [very thick] (2,1) -- (.6,1);
\draw [double, ->] (0.6,1) -- (0,1);
\draw [very thick] (.6,1) -- (1.4,0);
\end{tikzpicture} 
\quad,\quad
\begin{tikzpicture}[anchorbase, scale=.5]
\draw [double, ->] (2,1) to [out=180,in=0] (0,0);
 \draw [white,line width=.15cm] (2,0) to [out=180,in=0] (0,1) ;
\draw [very thick, ->] (2,0) to [out=180,in=0] (0,1);
\end{tikzpicture}
\;\;:= \;\;-t  \;\;\,
\begin{tikzpicture}[anchorbase, scale=.5]
\draw [double] (2,1) -- (1.4,1);
\draw [very thick, ->] (1.4,1) -- (0,1);
\draw [very thick] (2,0) -- (.6,0);
\draw [double, ->] (0.6,0) -- (0,0);
\draw [very thick] (.6,0) -- (1.4,1);
\end{tikzpicture} 
\quad,\quad
\begin{tikzpicture}[anchorbase, scale=.5]
\draw [double, ->] (2,1) to [out=180,in=0] (0,0);
 \draw [white,line width=.15cm] (2,0) to [out=180,in=0] (0,1) ;
\draw [double, ->] (2,0) to [out=180,in=0] (0,1);
\end{tikzpicture}
\;\;:= \;\;t^{2}  \;\;\,
\begin{tikzpicture}[anchorbase, scale=.5]
\draw [double, ->] (2,0) -- (0,0);
\draw [double, ->] (2,1) -- (0,1);
\end{tikzpicture}
\\ \nonumber
\begin{tikzpicture}[anchorbase, scale=.5]
\draw [very thick, ->] (2,0) to [out=180,in=0] (0,1);
 \draw [white,line width=.15cm] (2,1) to [out=180,in=0] (0,0) ;
\draw [double, ->] (2,1) to [out=180,in=0] (0,0);
\end{tikzpicture}
\;\;:= \;\;
-t^{-1}   \;
\begin{tikzpicture}[anchorbase, scale=.5]
\draw [double] (2,1) -- (1.4,1);
\draw [very thick, ->] (1.4,1) -- (0,1);
\draw [very thick] (2,0) -- (.6,0);
\draw [double, ->] (0.6,0) -- (0,0);
\draw [very thick] (.6,0) -- (1.4,1);
\end{tikzpicture} 
\quad,\quad
\begin{tikzpicture}[anchorbase, scale=.5]
\draw [double, ->] (2,0) to [out=180,in=0] (0,1);
 \draw [white,line width=.15cm] (2,1) to [out=180,in=0] (0,0) ;
\draw [very thick, ->] (2,1) to [out=180,in=0] (0,0);
\end{tikzpicture}
\;\;:= \;\;
- t^{-1} \;
\begin{tikzpicture}[anchorbase, scale=.5]
\draw [double] (2,0) -- (1.4,0);
\draw [very thick, ->] (1.4,0) -- (0,0);
\draw [very thick] (2,1) -- (.6,1);
\draw [double, ->] (0.6,1) -- (0,1);
\draw [very thick] (.6,1) -- (1.4,0);
\end{tikzpicture} 
\quad,\quad
\begin{tikzpicture}[anchorbase, scale=.5]
\draw [double, ->] (2,0) to [out=180,in=0] (0,1);
 \draw [white,line width=.15cm] (2,1) to [out=180,in=0] (0,0) ;
\draw [double, ->] (2,1) to [out=180,in=0] (0,0);
\end{tikzpicture}
\;\;:= \;\; t^{-2} \;
\begin{tikzpicture}[anchorbase, scale=.5]
\draw [double, ->] (2,0) -- (0,0);
\draw [double, ->] (2,1) -- (0,1);
\end{tikzpicture}
\quad .
\end{gather}
The diagrams above are from [QW 18]. For convenience, when orientation of an edge can be determined by the diagram we do not draw the arrows, as in the case of the diagrams above.

$G_t(\Sigma)$ has an algebra structure in the same way the Kauffman bracket skein algebra does, namely $w*w'$ is the result of stacking $w$ atop of $w'$ via the interval part of $\Sigma\times I$. In this paper, we use the term "skein" to mean an element of $G_t(\Sigma)$.
\end{definition}

The skein theory presented here encodes the pivotal tensor category of representations of $U_t(\mathfrak{gl}_2)$ generated by the vector representation $V$ and its exterior power $\wedge^{2}V$. In words, the 1-labeled edges represent the vector representation of $U_t(\mathfrak{gl}_2)$ and the 2-labeled edges represent the exterior square of the vector representation.\\

One defines $\mathfrak{gl_2}$-skein modules for 3-manifolds in a similar fashion, where the relations take place in a local 3-ball where one can choose a projection in the 3-ball. In this paper we only consider $\mathfrak{gl}_2$-webs in our manifold that do not have 1-valent vertices. This is in analogy of the Kauffman bracket skein module of 3-manifolds.

It should be noted that there is a module map between $G_t(\Sigma)$ and the Kauffman bracket skein algebra $S_A(\Sigma)$ that is described in \cite{QW18}. We refer to \cite{QW18} for more details.

\subsection{$\mathfrak{gl_2}$-Skeins and 3-Manifolds with Boundary} We now look at $G_t(M)$ for a 3-manifold $M$ with boundary $\partial M$, and a canonical action of $G_t(\partial M)$ on $G_t(M)$.

Note that there is an embedding map $i_M:\partial M\rightarrow 
 M$ which induces the map
\begin{align*}
\pi_M:G_t(\partial M)&\rightarrow G_t(M)\\
A &\mapsto \Tilde{i}_M(A),
\end{align*}
where $\Tilde{i}_M$ is $\BC(t)$-linearization of $i_M$. From $\pi_M$ we can define the following left action of $G_t(\partial M)$ on $G_t(M)$ by
$$A\cdot \alpha = \pi_M(A)\sqcup \alpha,$$
where $A\in G_t(\partial M)$ and $\alpha\in G_t(M)$.
In this paper we will call this the canonical left action of $G_t(\partial M)$ on $G_t(M)$.
It should also be noted that the projection map $\pi_M$ is not an algebra homomorphism, but we do have the following lemma.
\begin{lemma}
Let $A,B\in G_t(\partial M)$. Then we have
\begin{equation}
\label{projectionaction}
\pi_M(A* B)= A\cdot\pi_M(B).    
\end{equation}
Here $*$ on the left hand side denotes the multiplication in $G_t(\partial M)$, and $\cdot$ on the right hand side denotes the left action of $G_t(\partial M)$ on $G_t(M)$.
\end{lemma}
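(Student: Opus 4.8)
The plan is to reduce \eqref{projectionaction} to a statement about a collar of the boundary together with isotopy invariance of the skein module. Since $\pi_M$ and the action $\cdot$ are both defined by $\BZ[t^{\pm1}]$-linear extension from webs, and since $*$ is bilinear while the action is linear in each slot, it suffices by bilinearity in $A$ and $B$ to verify the identity when $A$ and $B$ are single tangled $\mathfrak{gl}_2$-webs; the general statement then follows by linearity in each variable.

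First I would recall that $G_t(\partial M)$ is the skein algebra of the thickened surface $\partial M \times I$, in which the product $A*B$ is represented by the web obtained by stacking $A$ in $\partial M \times [\tfrac{1}{2},1]$ atop $B$ in $\partial M \times [0,\tfrac{1}{2}]$. I would then fix a collar embedding $c : \partial M \times I \hookrightarrow M$ with $c(\partial M \times \{0\}) = \partial M$, so that the interval coordinate measures depth into $M$. The map on skeins induced by including this collar is precisely the linearization $\pi_M$ of $i_M$: an element of $G_t(\partial M) = G_t(\partial M \times I)$ is sent to the class in $G_t(M)$ of its representative web, transported into $M$ through $c$. By isotopy invariance this class does not depend on the level of the collar at which a single-surface web is placed, so for any web $w$ on $\partial M$ the skein $\pi_M(w)$ may be represented at any chosen depth $\partial M \times \{s_0\}$.

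Now I would compare the two sides directly inside the collar. Applying $\pi_M$ to the stacked representative of $A*B$ places $A$ in the sub-collar $c(\partial M \times [\tfrac{1}{2},1])$, nearer the boundary, and $B$ in $c(\partial M \times [0,\tfrac{1}{2}])$, deeper in $M$, with $A$ and $B$ supported in disjoint sub-collars. On the other hand, $A \cdot \pi_M(B) = \pi_M(A) \sqcup \pi_M(B)$, where the disjoint union is realized by pushing $\pi_M(A)$ into a thin collar near $\partial M$ while keeping $\pi_M(B)$ at a deeper level, so that the two webs are disjoint. These two prescriptions produce the very same embedded web in $M$, hence the same element of $G_t(M)$, establishing $\pi_M(A*B) = A \cdot \pi_M(B)$ on webs and, by linearity, in general.

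The only real obstacle is bookkeeping the two distinct notions of separation and matching their orders. In $A*B$ the separation is the stacking order along the interval factor of $\partial M \times I$, whereas in $A \cdot \pi_M(B)$ it comes from the operation $\sqcup$, which must be implemented by pushing one summand off the other into a collar. I would therefore make explicit that $c$ converts the stacking interval of $A*B$ into the depth direction of $M$, identifying ``$A$ atop $B$'' with ``$A$ nearer $\partial M$ than $B$,'' which matches the convention by which $\sqcup$ places $\pi_M(A)$ in the boundary collar. With the collar fixed so that these conventions agree, the equality is witnessed by a single isotopy in $M$, and the lemma follows.
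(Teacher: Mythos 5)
Your argument is correct and is essentially the paper's own proof: the paper likewise observes that $A\cdot\pi_M(B)=\pi_M(A)\sqcup\pi_M(B)=\pi_M(A\sqcup B)=\pi_M(A*B)$, relying on the fact that the stacking product in $G_t(\partial M)$ becomes a disjoint union after pushing into $M$. Your version merely makes explicit the collar neighborhood and the isotopy that the paper leaves implicit.
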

\begin{proof}
Using the definitions of the action and the fact the product in $G_t(\partial M)$ is defined via stacking of skeins, we see that
\begin{align*}
A\cdot \pi_M(B)&= \pi_M(A)\sqcup \pi_{M}(B)\\
&=\pi_M(A\sqcup B)\\
&=\pi_M(A*B)
\end{align*}
\end{proof}
\subsection{$\mathfrak{gl_2}$-Skein Algebra of the Torus and Solid Torus} We now look at the specific case when $M= S^1\times \BD$ and $\partial M=\BT$. In this section we will be following \cite{QW18}.\\

\noindent
\textbf{The Torus:}

Let $(m,n)$ denote the oriented framed 1-labeled curve in $\BT$ with homology class $(m,n)\in H_1(\BT)\cong \BZ\oplus \BZ$, and $\wedge^{(r,s)}$ denote the oriented framed 2-labeled curve with homology class $2(r,s)\in H_1(\BT)$ when viewing the $2$-labeled curve as two parallel curves.\\

Following \cite{QW18}, one basis for $G_t(\BT)$, called the standard basis, is
$$\{(m,n)*\wedge^{(r,s)}| m,n,r,s\in\BZ \}.$$
A more useful basis for $G_t(\BT)$ that we will use, also introduced in \cite{QW18}, is the standard $T$-basis, given by
$$\{(m,n)_T*\wedge^{(r,s)} | m,n,r,s\in\BZ, \hspace{2mm} m>0 \text{ or } n>m=0\}\cup \{\wedge^{(r,s)}|r,s\in\BZ\}$$
where $(m,n)_T$ is defined by induction on $gcd(m,n)$ as follows:\\
\begin{equation}\label{tbasis}
(m,n)_T=
\begin{cases}
(m,n), & \gcd(m,n)=1\\
(m,n)-2\wedge^{(m/2,n/2)}, & \gcd(m,n)=2\\
(m-a,n-b)_T*(a,b)-(m-2a,n-2b)_T*\wedge^{(a,b)}, & \gcd(m,n)=d\geq 3,\\ &\text{ and } (m,n)=(da,db)
\end{cases}.
\end{equation}

This basis is analogous to the one used in \cite{FG00}. The T-basis can be defined more generally for $G_t(\Sigma)$, but involves introducing some technical definitions such as \textit{laminations} that are not needed in this paper. We refer the reader to \cite{QW18} if they are interested.
\begin{theorem}\cite{QW18}
The $\BZ[t^{\pm 1}]$-algebra $G_t(\BT)$ is isomorphic to the abstract $\BZ[t^{\pm 1}]$-algebra with generators $(m,n)_T$ and $\wedge^{(m,n)}$ with $(m,n)\in \BZ^2$, subject to the following relations:
\begin{align}
\label{FGform}
(m,n)_T*(r,s)_T&=(m+r,n+s)_T+(m-r,n-s)_T*\wedge^{(r,s)}\\
\label{switch}
(m,n)_T*\wedge^{(r,s)}&=t^{2(ms-nr)}\wedge^{(r,s)}*(m,n)_T\\
\label{revorien}
(m,n)_T*\wedge^{(-m,-n)}&=(-m,-n)_T\\
\label{wedgecombinesplit}
\wedge^{(m,n)}*\wedge^{(r,s)}&=t^{2(ms-nr)}\wedge^{(m+r,n+s)}\\
\label{0guys}
(0,0)_T=2,&\hspace{2mm} \wedge^{(0,0)}=1
\end{align}
\end{theorem}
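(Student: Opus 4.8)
The plan is to exhibit a $\BZ[t^{\pm 1}]$-algebra homomorphism from the abstract algebra $\CA$ presented by the listed generators and relations onto $G_t(\BT)$, and then upgrade it to an isomorphism by comparing bases. Define $\phi:\CA\to G_t(\BT)$ on generators by sending the symbol $(m,n)_T$ to the skein $(m,n)_T$ built from the inductive formula \eqref{tbasis}, and the symbol $\wedge^{(m,n)}$ to the $2$-labeled framed multicurve of homology class $2(m,n)$. Three things must be established: that $\phi$ is well defined (the relations \eqref{FGform}--\eqref{0guys} hold among the actual skeins), that $\phi$ is surjective, and that $\phi$ is injective. The genuine content lies in the last point, which amounts to showing that the standard $T$-basis is an honest $\BZ[t^{\pm 1}]$-basis of $G_t(\BT)$, not merely a spanning set.

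For well definedness I would verify each relation diagrammatically. Relation \eqref{wedgecombinesplit} follows from superimposing two parallel $2$-labeled families on the torus and resolving the resulting crossings with the double--double relation in \eqref{eq:crossing}, which contributes the framing factor $t^{2(ms-nr)}$. Relation \eqref{switch} is the analogous computation with one $1$-labeled and one $2$-labeled family, using the mixed crossing relations in \eqref{eq:crossing}; the exponent records the total twist produced by the crossings. Relations \eqref{revorien} and \eqref{0guys} reduce to the circle and digon evaluations \eqref{eqn:circles} and \eqref{eqn:digons}. The Frohman--Gelca formula \eqref{FGform} is the crucial one: stacking $(m,n)_T$ over $(r,s)_T$ produces transverse crossings whose count is governed by $ms-nr$, and resolving a single crossing by the first relation of \eqref{eq:crossing} splits the product into the ``sum'' curve $(m+r,n+s)$ and a term carrying a $2$-labeled turnback, which after induction on $\gcd$ via \eqref{tbasis} becomes $(m-r,n-s)_T*\wedge^{(r,s)}$.

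For surjectivity I would use that every tangled web in $\BT\times I$ reduces to a $\BZ[t^{\pm 1}]$-combination of crossingless webs: apply the crossing relations \eqref{eq:crossing} to remove all $4$-valent vertices, then apply \eqref{eqn:circles}--\eqref{eqn:squares} to remove all contractible circles, digons, and squares. A crossingless reduced web on the torus is isotopic to a disjoint union of parallel $1$-labeled and $2$-labeled essential curves of a common slope, hence lies in the span of the elements $(k,l)*\wedge^{(r,s)}$, each of which is the image under $\phi$ of a product of generators; so $\phi$ is onto. The same reduction, together with repeated use of \eqref{FGform} run as a Euclidean algorithm on the homology coordinates and of \eqref{switch}, \eqref{wedgecombinesplit} to collect the wedge factors, shows that $\CA$ itself is spanned by the $T$-basis symbols, and that $\phi$ carries this spanning set onto the $T$-basis.

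The main obstacle is injectivity, i.e.\ linear independence of the $T$-basis in $G_t(\BT)$. Here I would argue that distinct crossingless reduced multicurves on the torus are non-isotopic and that no skein relation applies nontrivially to an already reduced diagram, so the associated skeins admit no nontrivial $\BZ[t^{\pm 1}]$-linear relation; the delicate part is certifying that the reduction to normal form is confluent and that the leading multicurve of each normal form is uniquely determined by its homology data and edge labels, so that the count of normal forms matches the index set of the claimed basis. An alternative route to the same conclusion is to produce a faithful action of $G_t(\BT)$ on an independently constructed module (for example a noncommutative-torus or lattice model) and check that the images of the $T$-basis act by linearly independent operators. Once independence is in hand, $\phi$ sends a basis of $\CA$ to a basis of $G_t(\BT)$ and is therefore an isomorphism.
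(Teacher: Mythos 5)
This theorem is not proved in the paper at all: it is quoted verbatim from \cite{QW18}, and the paper only uses it as imported input, so there is no internal proof to compare your attempt against. Judged on its own terms, your outline is the standard strategy (map the abstract presented algebra onto $G_t(\BT)$, check the relations, then match bases), and it correctly identifies where the real content lies; but at exactly those two places it stops short of an argument. First, the Frohman--Gelca formula \eqref{FGform} is not obtained by ``resolving a single crossing'': the curves $(m,n)$ and $(r,s)$ meet in $|ms-nr|$ points, so naive resolution produces on the order of $2^{|ms-nr|}$ terms, and the collapse of this expansion to exactly two $T$-basis terms is the theorem, not a diagrammatic observation. In \cite{QW18} this is handled by exploiting the comparison with the Kauffman bracket skein algebra of the torus, where the product-to-sum formula of \cite{FG00} is already established by a genuinely different mechanism (an embedding into a quantum torus / noncommutative torus), together with the extra bookkeeping for the $\wedge$-generators; some such structural input is unavoidable. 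Second, your injectivity step --- linear independence of the $T$-basis --- is asserted via ``no skein relation applies nontrivially to a reduced diagram,'' which is precisely the confluence statement that needs proof; without a diamond-lemma argument or a faithful representation (you mention both as options but carry out neither), the presented algebra could a priori be strictly larger than $G_t(\BT)$. So the proposal is a reasonable roadmap, consistent in spirit with how \cite{QW18} and \cite{FG00} proceed, but the two decisive steps are named rather than proved.
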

\noindent
The first formula is in analogy of the product-to-sum formula found in \cite{FG00}, and is called the Frohman-Gelca formula in \cite{QW18}. The theorem as a whole gives a presentation of the algebra $G_t(\BT)$ which is analogous to the way a presentation was given to the Kauffman bracket skein algebra of the torus in \cite{FG00}. With this presentation of $G_t(\BT)$, we are able to do many concrete computations regarding $G_t(\BT)$.\\

\noindent
\textbf{Solid Torus:}

Let $(m)$ denote the oriented framed 1-labeled curve in $\BT$ with homology class $m\in H_1(S^1\times \BD)\cong \BZ$, and $\wedge^{(r)}$ denote the oriented framed 2-labeled curve with homology class $2r\in H_1(S^1\times \BD)$ when viewing the $2$-labeled curve as two parallel curves. Note that the solid torus can be viewed as an annulus cross an interval, and thus we have an algebra structure.\\

From \cite{QW18}, a basis for $G_t(S^1\times \BD)$ is
$$\{(n)*\wedge^{(r)}| n,r\in \BZ \}.$$
Since the basis elements are just cores of the solid torus or annulus, we see that the elements in $G_t(S^1\times \BD)$ can be isotoped to not intersect one another. This tells us that the the algebra is commutative and that we actually have the follwing simple relations,
\begin{align}
\label{solidezrelations}
\begin{split}
(n)*(m)&=(n+m)\\
\wedge^{(k)}*\wedge^{(l)}&=\wedge^{(k+l)}.
\end{split}
\end{align}

Another useful relation that will be used is the following.
\begin{lemma}\cite{QW18} In $G_t(S^1\times \BD)$, we have that
\begin{align}
\label{revorien2}
(n)*\wedge^{(-n)}=(-n)    
\end{align}
for all $n\in\BZ$.
\end{lemma}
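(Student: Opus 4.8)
The plan is to reduce the identity to the primitive case $n=\pm 1$ and then bootstrap using the commutativity of $G_t(S^1\times\BD)$ together with the product relations \eqref{solidezrelations}. The base case needed is $(1)*\wedge^{(-1)}=(-1)$ and its orientation reverse $(-1)*\wedge^{(1)}=(1)$. I would obtain these as the solid-torus incarnation of the primitive case of \eqref{revorien}: for a primitive class $(m,n)_T=(m,n)$, so \eqref{revorien} with $(m,n)=(1,0)$ asserts $(1,0)*\wedge^{(-1,0)}=(-1,0)$, an identity whose derivation is supported entirely in an annular neighborhood $N$ of the $(1,0)$-curve. Since $N\times I$ is precisely the solid torus (annulus cross interval) with its stacking product, the same local skein manipulation yields $(1)*\wedge^{(-1)}=(-1)$ in $G_t(S^1\times\BD)$.

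With the base case in hand, the general case is a formal multiplicative bootstrap. Relations \eqref{solidezrelations} give $(n)=(1)^{*n}$ and $\wedge^{(-n)}=\bigl(\wedge^{(-1)}\bigr)^{*n}$ for $n\ge 1$, and since the algebra is commutative the factors may be interleaved freely with no powers of $t$:
\[
(n)*\wedge^{(-n)}=\bigl((1)*\wedge^{(-1)}\bigr)^{*n}=(-1)^{*n}=(-n),
\]
the final equality once more from \eqref{solidezrelations}. The range $n\le -1$ follows identically from the reversed base case, and $n=0$ is trivial.

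I expect the base case to be the only real obstacle, for two reasons. First, one must confirm that the derivation of the primitive relation is genuinely local, i.e. carried out within the neighborhood $N$ of a single core, so that it descends to $G_t(S^1\times\BD)$ rather than merely holding in $G_t(\BT)$; equivalently, one can check $(1)*\wedge^{(-1)}=(-1)$ from scratch by resolving the nested $1$- and $2$-labeled cores using the digon and square relations \eqref{eqn:digons}, \eqref{eqn:squares}. Second, one must verify that no spurious power of $t$ intrudes; this is consistent because the switch relation \eqref{switch} carries the exponent $2(ms-nr)$, which vanishes at $(r,s)=(-m,-n)$, and commutativity keeps the bootstrap free of $t$ as well.
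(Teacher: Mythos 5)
The paper does not actually prove this lemma; it is imported verbatim from \cite{QW18} as a citation, so there is no in-text argument to compare yours against. Judged on its own, your proof is correct, and it is worth recording the two points where care is needed. First, the bootstrap step is sound precisely because you only invoke \eqref{solidezrelations} for factors of the \emph{same} sign: $(n)=(1)^{*n}$, $\wedge^{(-n)}=(\wedge^{(-1)})^{*n}$, and $(-1)^{*n}=(-n)$ are all instances where the relation amounts to the definition of a multicurve as parallel copies of the core. (Taken literally for mixed signs, \eqref{solidezrelations} would assert that a positively and a negatively oriented core cancel, which is false in $G_t(S^1\times\BD)$ and would trivialize the lemma; your argument never needs such an instance.) Second, the base case $(1)*\wedge^{(-1)}=(-1)$ is indeed the only real content, and your locality argument for it is the right one: the identity is the skein-theoretic incarnation of the isomorphism $V\otimes(\wedge^2V)^{-1}\cong V^{*}$ of $U_t(\mathfrak{gl}_2)$-representations, and the digon and square relations \eqref{eqn:digons}, \eqref{eqn:squares} that realize it are supported in an annular neighborhood of the core, so the computation performed on the torus for \eqref{revorien} with $(m,n)=(1,0)$ descends unchanged to the solid torus. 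An alternative, less self-contained route to the same base case is to apply the projection $\pi$ to \eqref{revorien} for $(1,0)$ and use \eqref{projectionaction}, \eqref{projwedge}, and \eqref{initialxform}, but that machinery appears only later in the paper, so your direct local argument is preferable at this point in the exposition.
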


At this point we have covered the definitions and the notation necessary for Theorem \ref{actthm} and Theorem \ref{lenthm}.

\section{Action of $G_t(\BT)$ on $G_t(S^1\times \BD)$}
\label{secAct}

\subsection{Finding the Projection of Skeins in the Torus onto the Solid Torus}
\label{ProjNotation}
We now explicitly compute the projection of elements of $G_t(\BT)$ onto $G_t(S^1\times \BD)$. This will be used to express the action of $G_t(\BT)$ on $G_t(S^1\times \BD)$.

For the following lemmas we let $\pi:G_t(\BT)\rightarrow G_t(S^1\times \BD)$ be the projection map induced from $i:\BT\rightarrow S^1\times \BD$. And for convenience we introduce the following notation:
\begin{align*}
y_{r,s}&:=\pi(\wedge^{(r,s)})\\
x_{m,n}&:=\pi((m,n)_T)\\
(m)_T&:=\pi((m,0)_T)).
\end{align*}
\begin{lemma}
For all $r,s\in\BZ$, we have
\label{projwedge}
\begin{align}
y_{r,s}=t^{-2rs}\wedge^{(r)}.    
\end{align}
\end{lemma}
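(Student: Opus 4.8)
The plan is to reduce the computation of $\pi(\wedge^{(r,s)})$ to two \emph{atomic} projections---that of a $2$-labeled longitude and that of a $2$-labeled meridian---by first splitting $\wedge^{(r,s)}$ inside the torus algebra and then transporting the factorization through $\pi$. Concretely, relation \eqref{wedgecombinesplit} gives $\wedge^{(r,0)} * \wedge^{(0,s)} = t^{2rs}\,\wedge^{(r,s)}$ for all $r,s \in \BZ$, so that
\begin{equation*}
\wedge^{(r,s)} = t^{-2rs}\,\wedge^{(r,0)} * \wedge^{(0,s)}.
\end{equation*}
Applying $\pi$ together with the projection-action identity \eqref{projectionaction}, taking $A = \wedge^{(r,0)}$ and $B = \wedge^{(0,s)}$, then yields $\pi(\wedge^{(r,s)}) = t^{-2rs}\bigl(\wedge^{(r,0)} \cdot \pi(\wedge^{(0,s)})\bigr)$, which separates the problem into the two base cases and already exhibits where the prefactor $t^{-2rs}$ comes from.

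The two base cases are where the actual topology enters. First I would argue that the $2$-labeled meridian becomes trivial: the curve underlying $\wedge^{(0,s)}$ consists of $|s|$ parallel copies of the meridian $(0,1)$, and under the inclusion $i:\BT \hookrightarrow S^1\times\BD$ each meridian bounds an embedded disk $\{\mathrm{pt}\}\times\BD$ whose surface framing agrees with the $0$-framing of that disk. Hence each becomes a $0$-framed $2$-labeled unknot, which equals $\emptyset$ by the $2$-labeled circle relation in \eqref{eqn:circles}; taking disjoint copies gives $\pi(\wedge^{(0,s)}) = 1$. Second, the $2$-labeled longitude pushes straight to the core: $\wedge^{(r,0)}$ is $|r|$ parallel longitudes $(1,0)$, which isotope into $S^1\times\BD$ as $|r|$ parallel $2$-labeled cores carrying the surface framing, i.e.\ precisely $\wedge^{(r)}$, with no crossings or twists introduced. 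Thus $\wedge^{(r,0)}\cdot 1 = \pi(\wedge^{(r,0)}) = \wedge^{(r)}$, and combining the two computations gives $y_{r,s} = t^{-2rs}\,\wedge^{(r)}$.

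The step I expect to be the main obstacle is the careful \emph{framing bookkeeping} in these two base cases, that is, verifying that no spurious power of $t$ is created when the curves are pushed off $\BT$ into $S^1\times\BD$. For the meridian one must check that the surface framing and the disk framing coincide (equivalently, that the relevant self-linking is $0$), so that the $2$-labeled unknot evaluates to $\emptyset$ rather than to a $t$-multiple of it; for the longitude one must confirm that the surface framing of $(1,0)$ is exactly the framing used to define the generator $\wedge^{(1)}$ of $G_t(S^1\times\BD)$, so that $\pi(\wedge^{(r,0)}) = \wedge^{(r)}$ on the nose. Once these normalizations are pinned down, the remaining manipulations are purely formal: the identity holds for all $r,s\in\BZ$ (including negative or non-coprime values), since \eqref{wedgecombinesplit} and \eqref{projectionaction} are valid without restriction, and orientation reversals only relabel the disjoint unknots and cores without affecting their evaluations.
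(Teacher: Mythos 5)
Your proposal is correct and follows essentially the same route as the paper: split $\wedge^{(r,s)}=t^{-2rs}\,\wedge^{(r,0)}*\wedge^{(0,s)}$ via relation \eqref{wedgecombinesplit}, push the factorization through $\pi$ using \eqref{projectionaction}, and evaluate $\pi(\wedge^{(0,s)})=1$ and $\wedge^{(r,0)}\cdot 1=\wedge^{(r)}$. The paper treats the two base cases as immediate, whereas you spell out the framing bookkeeping, but the argument is the same.
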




\begin{proof}
Note from Equation \ref{wedgecombinesplit}, we have that
\begin{align*}
\wedge^{(r,s)}&=t^{-2rs}\wedge^{(r,0)}*\wedge^{(0,s)}.  
\end{align*}
And thus using the projection formula described by Equation \ref{projectionaction}, we see that
\begin{align*}
\pi(\wedge^{(r,s)}) &= t^{-2rs}\wedge^{(r,0)}\cdot \pi(\wedge^{(0,s)})\\
&= t^{-2rs}\wedge^{(r,0)}\cdot 1\\
&=t^{-2rs}\wedge^{(r)}.
\end{align*}
\end{proof}
We see that $y_{r,0}=\wedge^{(r)}$, and hence for convenience, instead of writing $y_{r,0}$ we will write $\wedge^{(r)}$ when applicable.

\begin{lemma}
\label{m0init}
For all $n\in \BZ$, we have
\begin{align}\label{m0initform}
x_{0,n}=(t)^n+(t^{-1})^n,    
\end{align}
\end{lemma}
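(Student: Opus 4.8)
The plan is to set up a two-term linear recursion in $n$ for $x_{0,n}$ by specializing the Frohman--Gelca formula \eqref{FGform} to $(r,s)=(0,1)$, and then to solve it by induction. Before starting I record the two base cases: from \eqref{0guys} we have $(0,0)_T=2$, so $x_{0,0}=\pi(2)=2=t^0+t^{-0}$; and since $\gcd(0,1)=1$ we have $(0,1)_T=(0,1)$, a single $1$-labeled meridian which bounds a disk in $S^1\times\BD$ and therefore projects, via the circle relation \eqref{eqn:circles}, to the scalar $x_{0,1}=t+t^{-1}$. Here it is important that, as confirmed by Lemma \ref{projwedge}, it is the second homology coordinate that is killed by $\pi$, so that $(0,1)$ is indeed the meridian.

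To produce the recursion I apply $\pi$ to the instance
\[
(0,n)_T*(0,1)_T=(0,n+1)_T+(0,n-1)_T*\wedge^{(0,1)}
\]
of \eqref{FGform}. On each term I use the projection--action identity \eqref{projectionaction}, $\pi(A*B)=A\cdot\pi(B)$, together with the two computations $\pi((0,1))=(t+t^{-1})\emptyset$ from above and $\pi(\wedge^{(0,1)})=y_{0,1}=\wedge^{(0)}=1$ (Lemma \ref{projwedge}). The one auxiliary fact I need is that the action of any $A\in G_t(\BT)$ on a scalar multiple of the empty skein is ordinary scalar multiplication, $A\cdot(c\,\emptyset)=c\,(\pi(A)\sqcup\emptyset)=c\,\pi(A)$, which is immediate from the definition $A\cdot\alpha=\pi(A)\sqcup\alpha$. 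Applying this, the left-hand side becomes $(0,n)_T\cdot x_{0,1}=(t+t^{-1})x_{0,n}$, while the two right-hand terms become $x_{0,n+1}$ and $(0,n-1)_T\cdot 1=x_{0,n-1}$. This yields
\[
x_{0,n+1}=(t+t^{-1})x_{0,n}-x_{0,n-1}.
\]

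With this recursion in hand, induction finishes the proof: the Chebyshev-type expression $t^n+t^{-n}$ satisfies the same recursion, and it agrees with $x_{0,n}$ at $n=0$ and $n=1$, so it agrees for all $n\ge 0$. For negative indices I would argue by symmetry: specializing \eqref{revorien} to $(0,n)_T*\wedge^{(0,-n)}=(0,-n)_T$ and projecting (again using \eqref{projectionaction} and $\pi(\wedge^{(0,-n)})=\wedge^{(0)}=1$) gives $x_{0,-n}=x_{0,n}$, and the target formula is already invariant under $n\mapsto -n$; alternatively one simply runs the recursion downward.

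The main obstacle is bookkeeping rather than anything deep: one must verify that $\pi((0,1))$ and $\pi(\wedge^{(0,1)})$ genuinely collapse to scalars, i.e.\ to contractible $1$- and $2$-labeled loops in the solid torus, and one must handle the fact that $\pi$ is not an algebra homomorphism and the $G_t(\BT)$-action need not be commutative. This is exactly what forces me to first reduce those two projections to multiples of the empty skein, so that \eqref{projectionaction} turns the action into plain scalar multiplication and the clean recursion emerges.
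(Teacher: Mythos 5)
Your proof is correct and follows essentially the same route as the paper: the recursion $x_{0,n+1}=(t+t^{-1})x_{0,n}-x_{0,n-1}$ you extract from the Frohman--Gelca formula \eqref{FGform} is literally the same identity the paper obtains from the recursive definition \eqref{tbasis} of $(0,n+1)_T$, and both arguments then close by induction from the base values $t+t^{-1}$ and $t^2+t^{-2}$ (you start one step earlier at $x_{0,0}=2$, which is equally valid). Your explicit symmetry argument $x_{0,-n}=x_{0,n}$ via \eqref{revorien} is a slightly cleaner way to dispatch the negative indices than the paper's ``argue analogously,'' but it is not a different method.
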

\begin{proof}

We will use induction to show this.\\

(Base Case $n=1$):
One can see that 
$$x_{0,1}=t+t^{-1},$$
since the meridional loop now bounds a disk in the solid torus.\\

(Base Case $n=2$):
Note that
$$(0,2)_T=(0,2)-2\wedge^{(0,1)},$$
then applying the projection map we have
\begin{align*}
x_{0,2}&=(t+t^{-1})^2-2\pi(\wedge^{(0,1)})\\
&=t^2+t^{-2}
\end{align*}

(Induction hypothesis):
Suppose that we have,
\begin{align*}
x_{0,n}&=(t)^n+(t^{-1})^n\\    
x_{0,n-1}&=(t)^{n-1}+(t^{-1})^{n-1}.
\end{align*}
Note that 
$$(m,n)_T=(m-a,n-b)_T*(a,b)-(m-2a,n-2b)*\wedge^{(a,b)}$$ if $\gcd(m,n)=d\geq 3$ and $(m,n)=(da,db)$.\\

For us we have,
$\gcd(0,n+1)=n+1\geq 3$ and $(0,n+1)=((n+1)0,(n+1)(1))$. So we have $a=0,b=1,$ and $d=n+1$, which give us
\begin{align*}
(0,n+1)_T &= (0,(n+1)-1)_T*(0,1)-(0-2(0),(n+1)-2(1))_T*\wedge^{(0,1)}\\
&=(0,n)_T*(0,1)-(0,n-1)_T*\wedge^{(0,1)}.
\end{align*}
Now applying the projection map to this equation, we have
\begin{align*}
x_{0,n+1} &= [(t)^n+(t^{-1})^n](t+t^{-1})-[(t)^{n-1}+(t^{-1})^{n-1}]\\
&=(t)^{n+1}+(t^{-1})^{n-1} +t^{n-1}+(t^{-1})^{n+1}-[(t)^{n-1}+(t^{-1})^{n-1}]\\
&=(t)^{n+1}+(t^{-1})^{n+1}.
\end{align*}
One can analogously prove the result for negative $n$ using the same methods above.
\end{proof}

\begin{lemma}
\label{initialx}
For all $k\in \BZ$, we have
\begin{align}
\label{initialxform}
\begin{split}
x_{1,k}&=t^k(1)\\
x_{-1,k}&=t^{-k}(-1)
\end{split}
\end{align}
\end{lemma}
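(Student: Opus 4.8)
The plan is to prove both identities by setting up a two-term recursion in the second index $k$ via the Frohman--Gelca formula \eqref{FGform} and then projecting. Since $\gcd(1,k)=1$ for every $k$, we have $(1,k)_T=(1,k)$ and there is no T-basis correction to track. Applying \eqref{FGform} with $(m,n)=(1,k)$ and $(r,s)=(0,1)$ gives
\[
(1,k)_T*(0,1)_T=(1,k+1)_T+(1,k-1)_T*\wedge^{(0,1)}.
\]
Now apply $\pi$ and use the projection identity \eqref{projectionaction}, $\pi(A*B)=A\cdot\pi(B)$. The two inputs needed are both benign: $\pi((0,1))=x_{0,1}=t+t^{-1}$ is a \emph{scalar} (the meridian bounds a disk, cf.\ Lemma~\ref{m0init}), and $\pi(\wedge^{(0,1)})=y_{0,1}=\wedge^{(0)}=1$ is \emph{trivial} by Lemma~\ref{projwedge} together with \eqref{eqn:circles}. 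Because these factors are a scalar and the identity respectively, no framing subtleties enter, and I obtain the clean recursion $x_{1,k+1}=(t+t^{-1})\,x_{1,k}-x_{1,k-1}$. The identical computation with $(m,n)=(-1,k)$ yields $x_{-1,k+1}=(t+t^{-1})\,x_{-1,k}-x_{-1,k-1}$.

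Next I would pin down two consecutive base values, which then determine each sequence for all $k\in\BZ$ by running the recursion upward and downward. The first value is immediate from isotopy: $(1,0)$ is the longitude, isotopic to the core of the solid torus, so $x_{1,0}=\pi((1,0))=(1)=t^{0}(1)$, and likewise $x_{-1,0}=(-1)$. The second value is the genuine content: the curve $(1,1)$ on $\partial(S^1\times\BD)$, pushed into the interior, is isotopic to the core carrying one unit of framing twist, and evaluating that twist on a $1$-labeled strand via the crossing relations \eqref{eq:crossing} produces a single power of $t$, giving $x_{1,1}=t(1)$. Feeding $x_{1,0}$ and $x_{1,1}$ into the recursion (whose characteristic roots are $t$ and $t^{-1}$) yields $x_{1,k}=t^{k}(1)$ for all $k\ge 0$, and rewriting the recursion as $x_{1,k-1}=(t+t^{-1})x_{1,k}-x_{1,k+1}$ propagates the formula to negative $k$.

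For the second identity the only new point is the sign of the framing exponent in the base case: reversing the longitudinal orientation reverses the sign of the twist picked up when the meridional winding is unwound, so $x_{-1,1}=t^{-1}(-1)$, and the same recursion then gives $x_{-1,k}=t^{-k}(-1)$. I would cross-check this sign using orientation reversal in the solid torus via \eqref{revorien2}, $(n)*\wedge^{(-n)}=(-n)$, which is the relation linking the two oriented cores.

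I expect the main obstacle to be precisely the base-case framing computation, i.e.\ showing $x_{1,1}=t(1)$ (and $x_{-1,1}=t^{-1}(-1)$) with the correct, orientation-dependent power of $t$. Everything algebraic---the recursion and its solution---follows formally from \eqref{FGform} and \eqref{projectionaction}, precisely because the projected factors appearing there are a scalar and the identity. By contrast, the twist factor is a genuinely diagrammatic quantity that must be extracted from the crossing relations \eqref{eq:crossing}; the delicate part is that this skein theory is not symmetric under orientation reversal, so one must take care not to conflate the $+1$-framed core (contributing $t$) with its reverse (contributing $t^{-1}$).
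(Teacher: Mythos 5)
Your argument is correct, but it takes a genuinely different route from the paper. The paper's proof is purely diagrammatic: it isotopes the $(1,k)$ curve to the core of the solid torus carrying $k$ framing twists and resolves the twists one at a time, each contributing a factor of $t$ via the crossing and circle relations (and says the negative-$k$ and $(-1,k)$ cases are analogous). You instead reduce the diagrammatic content to the single computation $x_{1,1}=t\,(1)$ (resp.\ $x_{-1,1}=t^{-1}(-1)$) and obtain everything else algebraically: the Frohman--Gelca formula \eqref{FGform} with $(r,s)=(0,1)$, projected via \eqref{projectionaction}, gives the three-term recursion $x_{\pm1,k+1}=(t+t^{-1})x_{\pm1,k}-x_{\pm1,k-1}$, which together with the two base values forces $x_{1,k}=t^k(1)$ and $x_{-1,k}=t^{-k}(-1)$ for all $k\in\BZ$. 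Your observation that the projected factors $\pi((0,1)_T)=t+t^{-1}$ and $\pi(\wedge^{(0,1)})=1$ are a scalar and the identity is exactly the point that makes this clean; had the acted-upon skein carried a nontrivial longitudinal component, extra crossing factors would appear (as they do elsewhere in the paper, cf.\ Theorem \ref{actthm}). What your approach buys is a formal induction replacing the paper's implicit ``each twist contributes a power of $t$,'' and a uniform treatment of negative $k$ by running the recursion downward; what it costs is reliance on the presentation of $G_t(\BT)$, whereas the paper's argument is self-contained at the level of diagrams. Both proofs ultimately rest on the same one-twist framing computation, including its orientation-dependence, which you correctly isolate as the essential content.
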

\begin{proof}
Note that $x_{1,k}$ is just the longitudinal element with $k$ twists. Locally each twist looks like the picture below for $k$ positive.
\begin{center}
\begin{tikzpicture}[anchorbase, scale=.4]
\draw [very thick, <-] (3,-1) to [out=180,in=-90] (0,1);
\draw [white,line width=.15cm] (2.05,0) to [out=180,in=0] (.05,-1) ;
\draw [very thick] (2,1) to [out=-90,in=0] (-1,-1);
\draw [very thick] (1,2) to [out=180,in=90] (0,1);
\draw [very thick] (1,2) to [out=0,in=90] (2,1);
\end{tikzpicture}    
\end{center}
Using the relations
\begin{center}
\begin{tikzpicture}[anchorbase, scale=.5]
\draw [very thick, ->] (2,0) to [out=180,in=0] (0,1);
 \draw [white,line width=.15cm] (2,1) to [out=180,in=0] (0,0) ;
\draw [very thick, ->] (2,1) to [out=180,in=0] (0,0);
\end{tikzpicture}
\;\;:=\;\;
\begin{tikzpicture}[anchorbase, scale=.5]
\draw [very thick, ->] (2,1) to (0,1);
\draw [very thick, ->] (2,0) to (0,0);
\end{tikzpicture}
\;\;-\;\;
$t^{-1}$   \;
\begin{tikzpicture}[anchorbase, scale=.5]
\draw [very thick] (2,0) to[out=180,in=315] (1.3,.5);
\draw [very thick] (2,1) to[out=180,in=45] (1.3,.5);
\draw [double] (1.3,.5) -- (.7,.5);
\draw [very thick, ->] (.7,.5) to[out=135,in=0]  (0,1);
\draw [very thick, ->] (.7,.5) to[out=225,in=0] (0,0);
\end{tikzpicture}
\quad,\quad
\begin{tikzpicture}[anchorbase, scale=.5]
\draw [very thick] (.5,0) -- (.5,.3);
\draw [very thick] (.5,.3) .. controls (.4,.35) and (0,.6) .. (0,1) .. controls (0,1.4) and (.4,1.65) .. (.5,1.7);
\draw [double, directed=0.55] (.5,.3) .. controls (.6,.35) and (1,.6) .. (1,1) .. controls (1,1.4) and (.6,1.65) .. (.5,1.7);
\draw [very thick, ->] (.5,1.7) -- (.5,2);
\end{tikzpicture}
\quad= \quad
\begin{tikzpicture}[anchorbase, scale=.5]
\draw [very thick,->] (.5,0) -- (.5,2);
\end{tikzpicture}
\quad= \quad
\begin{tikzpicture}[anchorbase, scale=.5]
\draw [very thick] (.5,0) -- (.5,.3);
\draw [double, directed=0.55] (.5,.3) .. controls (.4,.35) and (0,.6) .. (0,1) .. controls (0,1.4) and (.4,1.65) .. (.5,1.7);
\draw [very thick] (.5,.3) .. controls (.6,.35) and (1,.6) .. (1,1) .. controls (1,1.4) and (.6,1.65) .. (.5,1.7);
\draw [very thick, ->] (.5,1.7) -- (.5,2);
\end{tikzpicture}\\
\begin{tikzpicture}[fill opacity=.2,anchorbase,scale=.3]
\draw[very thick, directed=.55] (1,0) to [out=0,in=270] (2,1) to [out=90,in=0] (1,2)to [out=180,in=90] (0,1)to [out=270,in=180] (1,0);
\end{tikzpicture} 
\quad=\quad
$(t+ t^{-1}) \emptyset$
\quad=\quad 
\begin{tikzpicture}[fill opacity=.2,anchorbase,scale=.3]
\draw[very thick, rdirected=.55] (1,0) to [out=0,in=270] (2,1) to [out=90,in=0] (1,2)to [out=180,in=90] (0,1)to [out=270,in=180] (1,0);
\end{tikzpicture}
\end{center}
we see that
\begin{center}
\begin{tikzpicture}[anchorbase, scale=.4]
\draw [very thick, <-] (3,-1) to [out=180,in=-90] (0,1);
\draw [white,line width=.15cm] (2.05,0) to [out=180,in=0] (.05,-1) ;
\draw [very thick] (2,1) to [out=-90,in=0] (-1,-1);
\draw [very thick] (1,2) to [out=180,in=90] (0,1);
\draw [very thick] (1,2) to [out=0,in=90] (2,1);
\end{tikzpicture}    
\;\;=\;\;
$t$   \;
\begin{tikzpicture}[anchorbase, scale=.4]
\draw [very thick, <-] (2,1) to (0,1);
\end{tikzpicture}
\end{center}

\noindent
Hence we see that each twist contributes a power of $t$ and what is left is the longitudinal element $(1)$.  In the case that $k$ is negative we would have instead have negative power of $t$ for each twist. And hence in general we have $x_{1,k}=t^k(1)$.

This is done in a similar fashion for $x_{-1,k}$, except it is now the the curve now has reverse orientation.
\end{proof}

The following lemma allows us to recursively define $x_{m,n}$ using the base elements $x_{0,k}$, $x_{1,k}$, and $x_{-1,k}$, which we have already explicitly computed. 
\begin{lemma}
\label{recx}
For $m\geq 2$ we have
\begin{align*}
x_{m+1,k}&=(1,0)_T\cdot x_{m,k}-\wedge^{(1,0)}\cdot x_{m-1,k},\\
x_{-m-1,k}&=(-1,0)_T\cdot x_{-m,k}-\wedge^{(-1,0)}\cdot x_{-m+1,k}.
\end{align*}
\end{lemma}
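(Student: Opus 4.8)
The plan is to derive the recursion directly from the Frohman--Gelca formula (\ref{FGform}) by projecting it into the solid torus. Since the compatibility $\pi(A*B)=A\cdot\pi(B)$ of the action with the projection (equation (\ref{projectionaction})) requires the \emph{acting} skein to sit as the \emph{left} factor of the product, I would start from the expansion in which $(1,0)_T$ appears on the left,
$$(1,0)_T*(m,k)_T=(m+1,k)_T+(1-m,-k)_T*\wedge^{(m,k)}.$$
Applying $\pi$ and using (\ref{projectionaction}) with $A=(1,0)_T$ converts the left-hand side into $(1,0)_T\cdot x_{m,k}$ and yields $x_{m+1,k}$ as the first term on the right, so that
$$x_{m+1,k}=(1,0)_T\cdot x_{m,k}-\pi\!\left((1-m,-k)_T*\wedge^{(m,k)}\right).$$
The whole content of the lemma is then the identification of this last term with $\wedge^{(1,0)}\cdot x_{m-1,k}$.

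To carry out that identification I would simplify $(1-m,-k)_T*\wedge^{(m,k)}$ \emph{inside} $G_t(\BT)$ before projecting. First flip the orientation of the $1$-labelled factor using the reversal relation (\ref{revorien}) in the form $(1-m,-k)_T=(m-1,k)_T*\wedge^{(1-m,-k)}$; then fuse the two $2$-labelled curves by the wedge multiplication rule (\ref{wedgecombinesplit}), which collapses $\wedge^{(1-m,-k)}*\wedge^{(m,k)}$ to $t^{2k}\wedge^{(1,0)}$. This gives $(1-m,-k)_T*\wedge^{(m,k)}=t^{2k}(m-1,k)_T*\wedge^{(1,0)}$, hence $\pi\!\left((1-m,-k)_T*\wedge^{(m,k)}\right)=t^{2k}\,\pi\!\left((m-1,k)_T*\wedge^{(1,0)}\right)$. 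On the other side, the target $\wedge^{(1,0)}\cdot x_{m-1,k}=\pi\!\left(\wedge^{(1,0)}*(m-1,k)_T\right)$ is rewritten by the commutation relation (\ref{switch}), which introduces exactly the same factor $t^{2k}$ when moving $\wedge^{(1,0)}$ past $(m-1,k)_T$; comparing the two expressions shows they coincide. The negative identity follows from the mirror computation starting from $(-1,0)_T*(-m,k)_T$ and using (\ref{revorien}), (\ref{wedgecombinesplit}), (\ref{switch}) in the same way, where the wedge fusion now produces $t^{-2k}\wedge^{(-1,0)}$ and (\ref{switch}) supplies the matching $t^{-2k}$.

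The step I expect to be the main obstacle is the bookkeeping of the framing powers of $t$ together with the fact that $G_t(\BT)$ is noncommutative. The tempting shortcut is to treat the action of a boundary skein as an ordinary product in the commutative algebra $G_t(S^1\times\BD)$ and to drop the crossing factors; doing so loses the factor $t^{2k}$ and destroys the equality. The correct approach keeps every action written as $\pi$ of a genuine product in $G_t(\BT)$ until the very last step, so that the $t^{2k}$ arising from the wedge fusion (\ref{wedgecombinesplit}) is matched against the $t^{2k}$ arising from the commutation relation (\ref{switch}); it is precisely the cancellation of these two equal powers that makes the clean, power-free statement of the lemma correct. With this matching in hand no appeal to the explicit values of $x_{0,k}$ or $x_{\pm 1,k}$ (Lemmas \ref{m0init} and \ref{initialx}) is required; those enter only later, when one unwinds the recursion into closed form.
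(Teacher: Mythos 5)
Your proposal is correct and follows essentially the same route as the paper: both start from the Frohman--Gelca expansion of $(1,0)_T*(m,k)_T$, identify the remainder term with $\wedge^{(1,0)}*(m-1,k)_T$ using (\ref{revorien}), (\ref{wedgecombinesplit}) and (\ref{switch}) (the paper applies them in a slightly different order, commuting with (\ref{switch}) first, but the $t^{2k}$ cancellation you highlight is exactly the one occurring there), and then project via (\ref{projectionaction}). The negative case is handled identically in both.
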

\begin{proof}
Letting $m\geq 2$ and using the Frohman-Gelca formula (\ref{FGform}) on $(m+1,k)_T$, we have
\begin{align*}
(m+1,k)_T&=(1,0)_T*(m,k)_T-(1-m,0-k)_T*\wedge^{(m,k)}\\
(\ref{switch})&=(1,0)_T*(m,k)_T-t^{2((1-m)k-(-k)m))}\wedge^{(m,k)}*(1-m,0-k)_T\\
&=(1,0)_T*(m,k)_T-t^{2k}\wedge^{(m,k)}*(1-m,0-k)_T\\
(\ref{wedgecombinesplit})&=(1,0)_T*(m,k)_T-t^{2k}[t^{-2(k)}\wedge^{(1,0)}*\wedge^{(m-1,k)}]*(-(m-1),0-k)_T\\
(\ref{revorien})&=(1,0)_T*(m,k)_T-\wedge^{(1,0)}*(m-1,k)_T
\end{align*}
Applying the projection map $\pi:G_T(\partial M)\rightarrow G_t(M)$ to this equation we see that
\begin{align*}
\pi((m+1,k)_T)=\pi((1,0)_T*(m,k)_T-\wedge^{(1,0)}*(m-1,k)_T)   
\end{align*}
which then by the projection formula \ref{projectionaction}, becomes
$$x_{m+1,k}=(1,0)_T\cdot x_{m,k}-\wedge^{(1,0)}\cdot x_{m-1,k}.$$
Similarly we also see that
\begin{align*}
(-(m+1),k)_T&=(-1,0)_T*(-m,k)_T-(-1+m,0-k)_T*\wedge^{(-m,k)}\\
(\ref{switch})&=(-1,0)_T*(-m,k)_T-t^{2((-1+m)k-(-k)(-m)))}\wedge^{(-m,k)}*(-1+m,0-k)_T\\
&=(-1,0)_T*(-m,k)_T-t^{-2k}\wedge^{(-m,k)}*(-1+m,0-k)_T\\
(\ref{wedgecombinesplit})&=(-1,0)_T*(-m,k)_T-t^{-2k}[t^{2(k)}\wedge^{(-1,0)}*\wedge^{(-(m-1),k)}]*(m-1,0-k)_T\\
(\ref{revorien})&=(-1,0)_T*(-m,k)_T-\wedge^{(-1,0)}*(-m+1,k).
\end{align*}
Applying the projection map formula \ref{projectionaction} to this equation, we see that
$$x_{-m-1,k}=(-1,0)_T\cdot x_{-m,k}-\wedge^{(-1,0)}\cdot x_{-m+1,k}.$$
\end{proof}

\subsection{Proof Theorem \ref{actthm}, The Action}
Before we can prove Theorem \ref{actthm} we introduce one lemma that will be used often.

\begin{lemma}
Let $W\in G_t(\BT)$ such that it has no meridional component, namely it is a finite linear combination of elements of the form $(m,0)*\wedge^{(r,0)}$ where $m,r\in\BZ$. Then for all $U\in G_t(S^1\times \BD)$ we have
\begin{align}
\label{longprojez}
W\cdot U= \pi(W)*U.    
\end{align}
\end{lemma}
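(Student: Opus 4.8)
The plan is to prove the identity geometrically, by unwinding the definition of the canonical action and then exploiting the fact that a meridian-free skein projects to a \emph{core-parallel} web in the solid torus. By $\BC(t)$-linearity of both sides in $W$, it suffices to treat a single standard basis element $W=(m,0)*\wedge^{(r,0)}$; likewise I may reduce $U$ to a basis element $(n)*\wedge^{(l)}$ of $G_t(S^1\times\BD)$, so that $U$ too is represented by core-parallel curves.

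First I would recall that, by the definition of the canonical left action, $W\cdot U=\pi(W)\sqcup U$, the disjoint union being taken inside $S^1\times\BD$ after pushing $\pi(W)$ off the boundary torus $\BT$. The content of (\ref{longprojez}) is therefore to identify this disjoint union with the algebra product $\pi(W)*U$, which is computed in the annulus-times-interval model $S^1\times\BD\cong A\times I$ by stacking $\pi(W)$ above $U$ in the $I$-direction. Since $\pi(W)*U$ is itself literally a disjoint union of a repositioned $\pi(W)$ and a repositioned $U$, the whole problem reduces to producing an isotopy of $\pi(W)\sqcup U$ into that standard stacked position.

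The key step is the observation that, because $W$ has no meridional component, both factors $(m,0)$ and $\wedge^{(r,0)}$ are longitudinal on $\BT$; disjoint essential curves of the same (longitudinal) slope are isotopic to parallel copies of the longitude, so $W$ is represented by a web all of whose components are parallel to the longitude, with no crossings. Pushing this web off the boundary yields a representative of $\pi(W)$ consisting entirely of curves parallel to the core of the solid torus, each of the form $S^1\times\{pt\}$ for a point $pt$ of the meridian disk. I can then freely isotope all of these points into the upper slab $A\times[\tfrac{1}{2},1]$ while keeping them distinct, and simultaneously push $U$ off a collar of the top face into $A\times[0,\tfrac{1}{2}]$. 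With $\pi(W)$ lying entirely above $U$ in the $I$-direction, $\pi(W)\sqcup U$ is exactly the representative computing $\pi(W)*U$, giving $W\cdot U=\pi(W)\sqcup U=\pi(W)*U$.

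The main obstacle is making that key step rigorous: verifying that the projected web is genuinely core-parallel and can be combed into a single slab disjoint from $U$. This is precisely where the meridian-free hypothesis is essential, since a meridional component of $W$ would project to a curve bounding a meridian disk, which is \emph{not} isotopic to a core-parallel curve and cannot be slid past $U$ in the stacking direction. I would make the combing precise by noting that disjoint longitudinal curves on $\BT$ lie in a collar annulus of the boundary and that pushing this collar inward realizes them as core-parallel curves in $A\times[1-\varepsilon,1]$, after which the slab isotopy (and the routine bookkeeping of framings on the $1$- and $2$-labeled edges) is straightforward.
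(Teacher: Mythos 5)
Your proposal is correct and follows essentially the same route as the paper's own proof: both reduce to the observation that $W\cdot U=\pi(W)\sqcup U$ by definition and that, since $W$ has no meridional component (and $U$ can be represented by core-parallel curves), the disjoint union can be isotoped in $S^1\times\BD\cong A\times I$ into the stacked position that computes $\pi(W)*U$. Your version simply spells out the combing isotopy and the role of the meridian-free hypothesis in more detail than the paper does.
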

\begin{proof}
Note that $W\cdot U=\pi(W)\sqcup U$ by definition.
Now note that we can express $U$ in a way such that it has no meridional components, since it is an element of $G_T(S_1\times \BD)$ which has a basis consisting of purely longitudinal elements.

Now since $W$ also does not have meridional components, this means we can isotope $\pi(W)$ and $U$ in the $S^1\times \BD$ such that it has no crossings. Furthermore, after identifying $S^1\times \BD$ with the annulus we can isotope $\pi(W)$ and $U$ to be in line with the definition of $\pi(W)*U$. Namely, stacking $\pi(W)$ in the annulus atop of $U$ in the annulus. Hence we have $\pi(W)\sqcup U= \pi(W)*U$.
\end{proof}

We now prove the formulas
\begin{align*}
(m,n)_T\cdot\left[(k)_T*\wedge^{(l)}\right]&=t^{-2nl} \wedge^{(l)}* x_{m+k,n}+t^{-2n(k+l)}\wedge^{(k+l)}* x_{m-k,n}\\
\wedge^{(m,n)}\cdot\left[(k)_T*\wedge^{(l)}\right]&=t^{-2n(m+k+2l)}(k)_T*\wedge^{(l+m)},
\end{align*}
as stated in Theorem \ref{actthm}.
\begin{proof}
Note that
\begin{align*}
(m,n)_T*(k,0)_T&=(m+k,n)_T+(m-k,n)_T*\wedge^{(k,0)}
\end{align*}
and thus we have
\begin{align*}
(m,n)_T* [(k,0)_T*\wedge^{(l,0)}]&=[(m,n)_T*(k,0)_T]*\wedge^{(l,0)}\\
&=[(m+k,n)_T+(m-k,n)_T*\wedge^{(k,0)}]*\wedge^{(l,0)}\\
&=(m+k,n)_T*\wedge^{(l,0)}+(m-k,n)_T*\wedge^{(k+l,0)}\\
(\ref{switch})&=t^{2(-nl)}\wedge^{(l,0)}*(m+k,n)_T+t^{2(-n(k+l))}\wedge^{(k+l,0)}*(m-k,n)_T.
\end{align*}
Applying the projection map to this equation gives us
$$\pi((m,n)_T*[(k,0)_T*\wedge^{(l,0)}])=\pi (t^{2(-nl)}\wedge^{(l,0)}*(m+k,n)_T+t^{2(-n(k+l))}\wedge^{(k+l,0)}*(m-k,n)_T)$$
Note that the right hand side becomes
\begin{align*}
& \pi(t^{2(-nl)}\wedge^{(l,0)}*(m+k,n)_T+t^{2(-n(k+l))}\wedge^{(k+l,0)}*(m-k,n)_T) \\
(\ref{projectionaction})&=t^{-2nl} \wedge^{(l,0)}\cdot x_{m+k,n}+t^{-2n(k+l)}\wedge^{(k+l,0)}\cdot x_{m-k,n}\\
(\ref{longprojez})&=t^{-2nl} \wedge^{(l)}*x_{m+k,n}+t^{-2n(k+l)}\wedge^{(k+l)}*x_{m-k,n}.
\end{align*}
Note that using the projection formula \ref{projectionaction}, the left hand side becomes
\begin{align*}
\pi((m,n)_T*[(k,0)_T*\wedge^{(l,0)}])&=(m,n)_T\cdot [\pi((k,0)_T*\wedge^{(l,0)})]\\
(\ref{projectionaction})&=(m,n)_T\cdot [(k,0)_T\cdot\pi(\wedge^{(l,0)})]\\
(\ref{longprojez})&=(m,n)_T\cdot [(k,0)_T\cdot\wedge^{(l)}]\\
&=(m,n)_T\cdot [(k)_T*\wedge^{(l)}].
\end{align*}
Hence we then have
$$(m,n)_T\cdot\left[(k)_T*\wedge^{(l)}\right]=t^{-2nl} \wedge^{(l)}* x_{m+k,n}+t^{-2n(k+l)}\wedge^{(k+l)}* x_{m-k,n}.$$\\

Now for the 2-labelled action. Note that we have
\begin{align*}
\wedge^{(m,n)}* [(k,0)_T*\wedge^{(l,0)}]&=[t^{-2(mn)}\wedge^{(m,0)}*\wedge^{(0,n)}]* [(k,0)_T*\wedge^{(l,0)}]\\
&=t^{-2(mn)}\wedge^{(m,0)}*[\wedge^{(0,n)}* (k,0)_T]*\wedge^{(l,0)}\\
(\ref{switch})&=t^{-2(mn)}\wedge^{(m,0)}*[t^{-2(kn)}(k,0)_T*\wedge^{(0,n)}]*\wedge^{(l,0)}\\
&=t^{-2(mn+kn)}\wedge^{(m,0)}*(k,0)_T*[\wedge^{(0,n)}*\wedge^{(l,0)]}]\\
(\ref{wedgecombinesplit})&=t^{-2(mn+kn)}\wedge^{(m,0)}*(k,0)_T*[t^{2(-nl)}\wedge^{(l,n)}]\\
&=t^{-2(mn+kn+nl)}\wedge^{(m,0)}*(k,0)_T*\wedge^{(l,n)}.
\end{align*}
With this, applying the projection map we have
$$\pi(\wedge^{(m,n)}\cdot[(k,0)_T*\wedge^{(l,0)}])=\pi(t^{-2(mn-nk+nl)}\wedge^{(m,0)}*(k,0)_T*\wedge^{(l,n)}).$$
Note that the right hand side becomes
\begin{align*}
&\pi(t^{-2(mn-nk+nl)}\wedge^{(m,0)}*(k,0)_T*\wedge^{(l,n)})\\
(\ref{projectionaction})&=[t^{-2(mn+kn+nl)}\wedge^{(m,0)}*(k,0)_T]\cdot \pi(\wedge^{(l,n)})\\
(\ref{projwedge})&=[t^{-2(mn+kn+nl)}\wedge^{(m,0)}*(k,0)_T]\cdot t^{-2ln}\wedge^{(l)}\\
&=t^{-2(mn+kn+nl)}\wedge^{(m,0)}\cdot[(k,0)_T\cdot t^{-2ln}\wedge^{(l)}]\\
(\ref{longprojez})&=t^{-2(mn+kn+nl)}\wedge^{(m,0)}\cdot[(k)_T * t^{-2ln}\wedge^{(l)}]\\
(\ref{longprojez})&=t^{-2(mn+kn+nl)}\wedge^{(m)}*(k)_T * t^{-2ln}\wedge^{(l)}\\
(\ref{solidezrelations})&=t^{-2(mn+kn+2nl)}(k)_T*\wedge^{(l+m)}\\
&=t^{-2n(m+k+2l)}(k)_T*\wedge^{(l+m)}.
\end{align*}
And that the left hand side becomes
\begin{align*}
\pi(\wedge^{(m,n)}\cdot[(k,0)_T*\wedge^{(l,0)}])=\wedge^{(m.n)}\cdot[(k)_T*\wedge^{(l)}].
\end{align*}
And hence we finnally have that
$$\wedge^{(m,n)}\cdot\left[(k)_T*\wedge^{(l)}\right]=t^{-2n(m+k+2l)}(k)_T*\wedge^{(l+m)}.$$
\end{proof}

\section{Lens Spaces}
\label{secLens}
\subsection{Handle Bodies and Skein Modules}
We recall definitions regarding handles in dimension 3. Here we will be following \cite{GS99,Sch14}.
\begin{definition}
Let $M$ be a $3$-manifold with boundary $\partial M$. In this paper we will be using handles in the context of $3$-dimensions. For convenience we will use "$k$-handle" to mean a 3-dimensional $k$-handle.
\begin{itemize}
\item
 
A \textbf{1-handle} is the 3-manifold $ [-1,1]\times \BD$. One can attach a 1-handle to $M$ via an embedding $f:\partial([-1,1])\times \BD \rightarrow \partial M$. We call this a \textbf{1-handle attached to $M$}.
The \textbf{core of a 1-handle} is $[-1,1]\times {0}$ and
the \textbf{cocore of a 1-handle} is ${0}\times \BD$. 

\item
A \textbf{2-handle} is the 3-manifold $\BD\times [-1,1]$. One can attach a $2$-handle to $M$ via an embedding $\phi: \partial (\BD)\times [-1,1]\rightarrow \partial M$. We call this a \textbf{2-handle attached to $M$}.
The \textbf{core of a 2-handle} is $\BD\times \{0\}$ and
the \textbf{cocore of a 2-handle} is $\{0\}\times [-1,1]$. 

Let $N$ be a 3-dimensional manifold obtained attaching a 2-handle to a 3-manifold $M$. An isotopy that moves an edge of a web $w$ across the core of the $2$-handle is called a \textbf{handle slide}. 
    
\item 
A \textbf{3-handle} is the 3-dimensional closed ball $B$. One can attach a 3-handle to $M$ via an embedding (if it exists) $\phi:\partial (\mathbb{B})\rightarrow \partial M$. We call this a \textbf{3-handle attached to $M$}.
The \textbf{core of a  3-handle} is $\mathbb{B}$ and
the \textbf{cocore of a  3-handle} is $\{0\}$. 
\end{itemize}

A \textbf{3-dimensional $g$-handlebody} is the 3-manifold constructed by successively attaching $g$ 1-handles to the closed 3-dimensional ball. This 3-manifold is diffeomorphic to a closed regular neighborhood of a wedge of $g$ circles in $\BR^3$
\end{definition}
We now go over properties about $\mathfrak{gl_2}$-skein modules in regards to attaching handles. 
The following lemmas are from \cite{Prz99}, and we also follow \cite{Mcl04}. 
\begin{lemma}\cite{Prz99}
\label{Skein3ball}
Let $M$ be a 3-manifold with boundary $\partial M$. If $N$ is obtained from $M$ by attaching a $3$-handle $H_3$, and $i:M\rightarrow N$ the associated embedding, then the induced map $i_*:G_t(M)\rightarrow G_t(N)$ is an isomorphism.
\end{lemma}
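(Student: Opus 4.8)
The plan is to show that $i_*$ is simultaneously surjective and injective, using in both cases the same general-position principle: a $\mathfrak{gl}_2$-web is one-dimensional, so the web itself (and the two-dimensional surface swept out by an isotopy of it) can be pushed off the three-ball $H_3$. Throughout I write $S=\partial H_3\cong S^{2}$ for the attaching sphere, so that $H_3$ is glued to $M$ along $S\subset\partial M$ and $N\setminus\mathrm{int}(H_3)=M$, with the cocore of $H_3$ being a single point.

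First I would establish surjectivity. Given a web $w\subset N$, make it transverse to the cocore $\{0\}$ of $H_3$; since the cocore is zero-dimensional and $w$ is one-dimensional, $w$ misses it, so $w\cap H_3$ is a compact $1$-manifold with endpoints on $S$. Using the radial deformation retraction of $H_3\setminus\{0\}$ onto a collar of $S$, one pushes $w\cap H_3$ entirely into $M$, carrying its framing along. The resulting web $\bar w\subset M$ satisfies $i_*[\bar w]=[w]$, so every class in $G_t(N)$ lies in the image of $i_*$.

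For injectivity I would construct a one-sided inverse $\psi\colon G_t(N)\to G_t(M)$ and verify $\psi\circ i_*=\mathrm{id}$. On a web $w$ set $\psi([w])=[\bar w]$, the class of the push-off just constructed. The real content is well-definedness. Independence of the chosen push-off, together with invariance under isotopy of $w$ in $N$, both reduce to the following statement: if $\bar w$ and $\bar w'$ arise from webs that are isotopic in $N$, then the composite isotopy between them sweeps out a two-dimensional trace, which by general position in the three-manifold $N$ misses the zero-dimensional cocore and can therefore be radially pushed out of $H_3$, producing an isotopy of $\bar w$ to $\bar w'$ inside $M$. Invariance under a local skein relation is handled by first isotoping the small defining ball of the relation into $M$ (possible since the relation is local and the strands through it can be pushed off $H_3$), after which the relation already holds in $G_t(M)$. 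Granting well-definedness, $\psi\circ i_*=\mathrm{id}$ is immediate, since a web already lying in $M$ is its own push-off; combined with surjectivity this shows $i_*$ is an isomorphism.

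The main obstacle I anticipate is precisely the well-definedness of $\psi$: turning ``push the web, and the traces of its isotopies, off the ball'' into a rigorous argument requires the transversality fact that a two-dimensional isotopy trace in $N$ can be made disjoint from the cocore point of $H_3$, and then that the radial retraction of $H_3\setminus\{0\}$ carries both the webs and their framings coherently into $M$. One must also run general position uniformly for webs carrying trivalent vertices and $2$-labeled (double) edges, and confirm that sliding the defining ball of a local relation off $H_3$ does not disturb the relation itself. Since all of these objects are at most two-dimensional while $H_3$ is a ball capping a $2$-sphere, none of them obstruct the push-off, and the argument goes through.
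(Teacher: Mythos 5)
Your argument is correct and is essentially the standard general-position proof that the paper itself defers to: the paper gives no proof of this lemma, citing \cite{Prz99} and remarking only that the arguments there are topological (isotopies in the complement of the $3$-handle) and carry over verbatim to webs. Your push-off of webs and of $2$-dimensional isotopy traces past the cocore point, together with sliding the defining balls of the local relations into $M$, is exactly that argument.
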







\begin{lemma}\cite{Prz99}
\label{Sk2Hand}
Let $\gamma$ be a simple closed curve on $\partial M$ and $N=M\cup_\gamma H_2 $ be the 3-manifold obtained from $M$ by attaching a $2$-handle via the attaching map induced by $\gamma$. 
\begin{enumerate}
    \item $i_*:G_t(M)\rightarrow G_t(N)$ is surjective
    
    \item $G_t(N)\cong G_t(M)/J$ where $J$ is the submodule of $G_t(M)$ generated by expressions $w-sl(w)$, where $w$ is a tangled $\mathfrak{gl}_2$-web and $sl(w)$ is a tangled $\mathfrak{gl}_2$-web obtained from performing a handle slide with $w$ in regards to $H_2$.

\end{enumerate}

\end{lemma}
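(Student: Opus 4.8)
The plan is to prove part (2) by constructing an explicit isomorphism and to obtain part (1) as a byproduct; the whole argument is a general-position analysis of how webs and isotopies meet the cocore $\{0\}\times[-1,1]$ of the attached handle $H_2=\BD\times[-1,1]$.

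First I would establish surjectivity. Given a tangled web $w$ in $N$, I would put it in general position with respect to the cocore. Since $w$ is $1$-dimensional and the cocore is $1$-dimensional inside the $3$-manifold $N$, transversality forces $w$ to be disjoint from the cocore. The complement $(\BD\setminus\{0\})\times[-1,1]$ of the cocore in $H_2$ deformation retracts radially onto the attaching annulus $\partial\BD\times[-1,1]\subset\partial M$, and carrying $w$ along this retraction, through embedded framed webs, isotopes it into $\partial M\subset M$. Thus every class in $G_t(N)$ is represented by a web lying in $M$, so $i_*$ is surjective.

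Next I would check that $J\subseteq\ker(i_*)$. By definition a handle slide is an isotopy in $N$ dragging an edge of $w$ across the core of $H_2$, so $w$ and $sl(w)$ are isotopic in $N$ and $w-sl(w)\in\ker(i_*)$. Hence $i_*$ descends to a surjection $\bar\imath\colon G_t(M)/J\to G_t(N)$.

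The decisive step, and the one I expect to be the main obstacle, is to produce a two-sided inverse $\Phi\colon G_t(N)\to G_t(M)/J$. On a web $w$ in $N$ I would define $\Phi(w)$ to be the class in $G_t(M)/J$ of any pushoff of $w$ into $M$ constructed as above; the content is well-definedness. Given two pushoffs of one web, or more generally an isotopy $F\colon W\times[0,1]\to N$ between two webs lying in $M$, I would put $F$ in general position with respect to the cocore. The trace of $F$ is $2$-dimensional inside the $3$-manifold $N$, so it meets the $1$-dimensional cocore transversally in finitely many interior points; the traces of vertices and of $4$-valent crossings are only $1$-dimensional and hence generically miss the cocore, so at each of these finitely many times exactly one edge passes through the cocore. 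On each complementary time interval the web stays disjoint from the cocore, so that portion of the isotopy can be realized inside $M$ after retracting off the handle, while at each crossing time the single edge is dragged through the handle, which is precisely a handle slide across the core of $H_2$. Decomposing $F$ in this way exhibits the two pushoffs as related by a finite alternation of isotopies in $M$ and handle slides, so they agree in $G_t(M)/J$. Compatibility of $\Phi$ with the defining skein relations is immediate, since each relation is supported in a small ball that may be isotoped off the handle into $M$. Finally $\Phi$ and $\bar\imath$ are mutually inverse, which yields $G_t(N)\cong G_t(M)/J$ and completes both parts.
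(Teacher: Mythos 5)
Your argument is correct and is exactly the general-position/handle-slide argument that the paper relies on: the paper gives no proof of this lemma beyond citing \cite{Prz99} and remarking that Przytycki's topological isotopy arguments extend to webs, and what you have written is precisely that argument spelled out (including the one point where webs differ from links, namely that the trivalent vertices and crossings sweep out only a $1$-dimensional trace and hence generically miss the cocore). No gaps; this matches the intended proof.
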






It should be noted that the proofs of these lemmas given in \cite{Prz99} are topological. Namely, the key arguments are about isotopies of links, and hence these arguments extend to the case of webs.

\subsection{Heegaard Splittings and Heegaard Diagrams}
We recall Heegaard splittings and diagrams, following \cite{Hom20, Sch14} as reference.

\begin{definition}
Let $Y$ be a 3-manifold. A \textbf{Heegaard splitting} of $Y$ is a decomposition $Y=V\cup_f W$ where $V$, $W$ are handlebodies and $f$ is an orientation reversing homeomorphism from $\partial V$ to $\partial W$.
\end{definition}

\begin{definition}
Let $H$ be a handlebody of genus $g$. A \textbf{set of attaching circles} for $H$ is a set $\{\gamma_1,\dots,\gamma_g\}$ of simple closed curves in $\Sigma=\partial H$ such that
\begin{enumerate}
    \item the curves are pairwise disjoint
    \item $\Sigma-\gamma_1-\cdots-\gamma_g$ is connected,
    \item each $\gamma_i$ bounds a disk in $H$.
\end{enumerate}
\end{definition}

\begin{definition}
    
    

\label{HeegDiag}
A \textbf{Heegaard diagram} is a triple $\mathcal{H}=(\Sigma,\alpha,\beta)$ where
\begin{enumerate}
    \item $\Sigma$ is closed oriented surface of genus $g$,
    
    \item $\alpha=\{\alpha_1,\dots,\alpha_g\}$ is a set of attaching circles for a handle body $V$ with $\partial V=\Sigma$,
    
    \item $\beta=\{\beta_1,\dots \beta_g\}$ is a set of attaching circles for a handle body $W$ with $\partial W=\Sigma$.
\end{enumerate}

Given a Heegaard diagram, one constructs the manifold it represents in the following way: Take the surface $\Sigma$ and look at the 3-manifold $\Sigma\times [0,1]$. Note that $\partial(\Sigma\times [0,1])=\Sigma\times \{0\}\sqcup \Sigma\times \{1\}$. Now attach $2$-handles along $\alpha_i\times \{0\}$ followed by attaching a $3$-handle to the boundary sphere created by the $2$-handles attached to the set of attaching circles $\alpha_i$. This process creates the handle body $V$. Now we attach 2-handles along the curves $\beta_i\times \{1\}$ followed by attaching a 3 handle to boundary sphere created by attaching 2-handles to the set of attaching circles $\beta_i$. This process creates $W$ glued to $V$ and results in a $3$-manifold.

\end{definition}

With this we would like to look at Lemma \ref{Sk2Hand} in the case of the 2-handle attachments described above.
\begin{lemma}\label{Heeg2handSkein} \cite{Mcl04, Prz99}
Let $(\Sigma,\alpha,\beta)$ be Heegaard diagram, and $M$ be the constructed 3-manifold as described in Definition \ref{HeegDiag}. Then we have
$$G_t(M)=G_t(\Sigma)/(S_V+S_W)$$
where
\begin{align*}
S_V &= \sum_{i=1}^g J_{\alpha_i}\quad, &
S_W &= \sum_{i=1}^g J_{\beta_i}
\end{align*}
and $J_{\alpha_i},J_{\beta_i}$ are the ideals generated by handle slides in regards to the 2-handles $\alpha_i,\beta_i$ as described in Lemma \ref{Sk2Hand}.
\end{lemma}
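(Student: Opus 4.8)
\textbf{The plan is to} combine the two preceding ingredients directly: Lemma~\ref{Sk2Hand}, which identifies a skein module after a 2-handle attachment with a quotient of the un-attached skein module by the ideal of handle-slide relations, and the explicit construction of the manifold $M$ from the Heegaard diagram given in Definition~\ref{HeegDiag}. The construction there builds $M$ from $\Sigma \times [0,1]$ by attaching $2g$ 2-handles (along the $\alpha_i \times \{0\}$ and the $\beta_i \times \{1\}$) together with two 3-handles to cap off the resulting boundary spheres. So the proof is essentially an induction over these handle attachments, tracking how the skein module transforms at each stage.

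\textbf{First I would} set up the base object. Since $\Sigma \times \{0\}$ and $\Sigma \times \{1\}$ are both copies of $\Sigma$, and $G_t(\Sigma \times [0,1]) = G_t(\Sigma)$ by the definition of the skein algebra of a surface, the starting module is $G_t(\Sigma)$. \textbf{Then I would} attach the $2g$ 2-handles one at a time. By part (2) of Lemma~\ref{Sk2Hand}, each attachment of a 2-handle along a curve $\gamma$ quotients the current module by the ideal $J_\gamma$ generated by the handle-slide relations $w - sl(w)$ across that handle. Attaching along $\alpha_1, \dots, \alpha_g$ and then $\beta_1, \dots, \beta_g$ in succession therefore produces the quotient of $G_t(\Sigma)$ by the sum of ideals $\sum_{i=1}^g J_{\alpha_i} + \sum_{i=1}^g J_{\beta_i} = S_V + S_W$. \textbf{Finally I would} observe that attaching the two 3-handles, by Lemma~\ref{Skein3ball}, induces an isomorphism on skein modules and so does not alter the quotient. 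Assembling these, $G_t(M) = G_t(\Sigma)/(S_V + S_W)$, as claimed.

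\textbf{The one point requiring care} is that the handle attachments are performed on the two different boundary components $\Sigma \times \{0\}$ and $\Sigma \times \{1\}$ of $\Sigma \times [0,1]$, whereas Lemma~\ref{Sk2Hand} is phrased for attaching a single 2-handle along a curve on $\partial M$ of a generic manifold $M$. I would make explicit that at each intermediate stage the relevant attaching curve still lies on the boundary of the partially-built manifold, so that Lemma~\ref{Sk2Hand} applies verbatim, and that the order of attachment does not matter for the final quotient since summing ideals is commutative and associative. \textbf{The main obstacle} is bookkeeping rather than conceptual: one must verify that the handle slides across $\alpha_i \times \{0\}$ and across $\beta_j \times \{1\}$ genuinely generate independent relations that together exhaust $S_V + S_W$, and that no extra relations are forced by the interaction between handles attached on opposite ends. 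Since the surjectivity in part (1) of Lemma~\ref{Sk2Hand} and the 3-handle isomorphism of Lemma~\ref{Skein3ball} are both already available, the argument reduces to iterating these two lemmas carefully, which is why the statement follows cleanly once the handle-by-handle accounting is in place.
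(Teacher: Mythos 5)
Your proposal is correct and follows exactly the route the paper intends: the paper does not spell out a proof of Lemma~\ref{Heeg2handSkein} (it only cites \cite{Mcl04, Prz99}), but the surrounding Lemmas~\ref{Sk2Hand} and~\ref{Skein3ball} are stated precisely so that the lemma follows by iterating the 2-handle quotient over all $\alpha_i$ and $\beta_i$ and then absorbing the 3-handles via the induced isomorphism, which is what you do. Your flagged subtlety --- that later attaching curves live on the boundary of the partially built manifold, so the handle-slide ideals must be pulled back along the surjections of Lemma~\ref{Sk2Hand}(1) to be compared inside $G_t(\Sigma)$ --- is the right point to be careful about and is handled correctly.
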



\subsection{Heegaard Splittings and Skein Modules}
\label{SkeinHeegSec}
Let $V$ and $W$ be handlebodies of genus $g$ with $\partial W=\partial V =\Sigma$, and $\tilde{f}:\partial V\rightarrow \partial W$ be an orientation reversing homeomorphism from the boundary of $V$ to the boundary of $W$. By gluing $V$ to $W$ along $\tilde{f}$ we obtain the compact oriented $3$-manifold
$$M=V\cup_{\tilde{f}} W.$$
Note that the map $\tilde{f}$ induces a map $f:G_t(\partial V)\rightarrow G_t(\partial W)$. This map is actually an anti homomorphism, namely we have
$$f(A*_{\partial V}B)=f(B)*_{\partial W}f(A),$$
where $*_{\partial V}$ and $*_{\partial W}$ denotes the multiplication in $G_t(\partial V)$ and  $G_t(\partial W)$ respectively.
With this we can view $G_t(V)$ as a left $G_t(\partial V)$-module and $G_t(W)$ as right $G_t(\partial V)$-module. Where the action can be seen as follows,
\begin{align*}
A\cdot v&:=\pi_V(A)\sqcup v\\\\  
A\cdot' w&:=\pi_W(f(A))\sqcup w.
\end{align*}
Here $A\in G_t(\partial V)$, $\pi_V$ the the projection map induced from $i_V:\partial V\rightarrow V$,  and $\pi_W$ denotes the projection map induced from $i_W:\partial W\rightarrow W$.

We see that $G_t(V)$ and $G_t(W)$ are left and right $G_t(\partial V)$-modules respectively. Namely, we see that for $A,B\in G_t(\partial V)$
\begin{align*}
A\cdot[B\cdot v] &=A\cdot[\pi_V(B)\sqcup v]\\
&=\pi_V([A*_{\partial V} B])\sqcup v\\
&=[A*_{\partial V}B]\cdot v\\\\
B\cdot'[A\cdot'w]&=B\cdot'[\pi_W(f(A))\sqcup w]\\
&=\pi_W(f(B)*_{\partial W} f(A))\sqcup w\\
&=\pi_W(f(A*_{\partial V}B))\sqcup w\\
&=[A*_{\partial V}B] \cdot' w\\
&=[B*^{op}_{\partial V} A]\cdot'w,
\end{align*}
and that the empty skein in $G_t(\partial V)$ acts as the identity on skeins in both $G_t(V)$ and $G_t(W)$.\\

One can also express the right action of $G_t(\partial V)$ on $G_t(W)$ via the canonical left action of $G_t(\partial W)$ on $G_t(W)$. Namely we see that,
\begin{align}
\label{righttoleftaction}
A\cdot'w= f(A)\cdot w
\end{align}
and furthermore as it will used later on, we see that
\begin{align}
\label{rightleftantihom}
[A*_{\partial V}B]\cdot' w &= [f(B)*_{\partial W}f(A)]\cdot w     
\end{align}
where $\cdot w $ denotes the canonical left action of $G_t(\partial W)$ on $G_t(W)$.\\

Now we want to show that when $M=V\cup_{\tilde{f}}W$ we have $G_t(M)\cong G_t(W)\otimes_{G_t(\Sigma)}G_t(V)$. To do this we first need the following lemma.

\begin{lemma}\label{Bourbaki} \cite{Bou89}
Let $A$ be a $\BC(t)$-algebra, $E'$ be a submodule of a right $A$-module $E$, and $F'$ a submodule of a left $A$-module $F$. So there are the sub-$\BC(t)$-modules $Im(E'\otimes_A F)$ and $Im(E\otimes_A F')$ of $E\otimes_A F$ given by the respective images of the canonical mappings
\begin{align*}
E'\otimes_A F &\rightarrow E\otimes_A F, \\
E\otimes_A F' &\rightarrow E\otimes_A F .
\end{align*}
Then there is a canonical $\BC(t)$-module isomorphism
$$\pi: (E/E')\otimes_A (F/F')\rightarrow (E\otimes_A F)/(Im(E'\otimes_A F)+Im(E\otimes_A F'))$$
such that, $e\in E/E'$, $f\in F/F'$, $\pi(e\otimes f)$ is the class of all elements $x\otimes y\in E\otimes_A F$ such that $x\in e$ and $y\in f$.
\end{lemma}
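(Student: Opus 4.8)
The plan is to exhibit the map $\pi$ together with an explicit two-sided inverse, using nothing more than the universal properties of the tensor product over $A$ and of quotient modules; conceptually this is just the right exactness of $(-)\otimes_A(-)$ applied in each variable. Throughout write $N := Im(E'\otimes_A F)+Im(E\otimes_A F')$ for the submodule of $E\otimes_A F$ appearing in the statement, and let $q:E\otimes_A F\to (E\otimes_A F)/N$ be the quotient map.

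First I would construct the forward map. Consider $\beta:(E/E')\times(F/F')\to (E\otimes_A F)/N$ defined by $(\overline{e},\overline{f})\mapsto q(e\otimes f)$. This is independent of representatives: replacing $e$ by $e+e'$ with $e'\in E'$ changes $e\otimes f$ by $e'\otimes f\in Im(E'\otimes_A F)\subseteq N$, and symmetrically in the second slot, so $q(e\otimes f)$ is unchanged. Since $e\otimes f$ is biadditive and $A$-balanced in $E\otimes_A F$, so is $\beta$, and the universal property of $\otimes_A$ yields a $\BC(t)$-linear map $\pi:(E/E')\otimes_A(F/F')\to (E\otimes_A F)/N$ with $\pi(\overline{e}\otimes\overline{f})=q(e\otimes f)$, which is exactly the formula claimed in the statement.

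Next I would build the candidate inverse. The assignment $(e,f)\mapsto \overline{e}\otimes\overline{f}$ is $A$-balanced and biadditive, so it induces $\tilde\psi:E\otimes_A F\to (E/E')\otimes_A(F/F')$ with $\tilde\psi(e\otimes f)=\overline{e}\otimes\overline{f}$. I claim $\tilde\psi$ vanishes on $N$: on the generators $e'\otimes f$ of $Im(E'\otimes_A F)$ one has $\tilde\psi(e'\otimes f)=\overline{e'}\otimes\overline{f}=0$ because $\overline{e'}=0$ in $E/E'$, and symmetrically $\tilde\psi$ kills $Im(E\otimes_A F')$. Hence $\tilde\psi$ factors through $\psi:(E\otimes_A F)/N\to (E/E')\otimes_A(F/F')$ with $\psi(q(e\otimes f))=\overline{e}\otimes\overline{f}$. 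Checking on generators gives $\psi(\pi(\overline{e}\otimes\overline{f}))=\overline{e}\otimes\overline{f}$ and $\pi(\psi(q(e\otimes f)))=q(e\otimes f)$; since the pure tensors $\overline{e}\otimes\overline{f}$ generate the domain and the classes $q(e\otimes f)$ generate the target, both composites are the identity, so $\pi$ is the asserted isomorphism.

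I do not expect a deep obstacle here; the one point demanding care rather than cleverness is that $Im(E'\otimes_A F)$ and $Im(E\otimes_A F')$ must be read as the images of the canonical maps into $E\otimes_A F$, not as literal copies of $E'\otimes_A F$ and $E\otimes_A F'$. Because $\otimes_A$ is not left exact, the map $E'\otimes_A F\to E\otimes_A F$ need not be injective, so the whole argument must be phrased through these images and through $q$, exactly as above. Once that is kept in mind, every verification reduces to the universal properties already invoked, and no further computation is needed.
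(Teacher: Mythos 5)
Your proof is correct and complete: the paper itself gives no argument for this lemma, deferring entirely to the citation of Bourbaki, and what you have written is precisely the standard universal-property proof found there (construct $\pi$ by checking the bilinear map is well defined modulo $E'$ and $F'$, construct the inverse by factoring through the submodule $N$, and verify both composites on pure tensors). Your closing caveat --- that $Im(E'\otimes_A F)$ must be read as the image of the canonical map rather than as a literal copy of $E'\otimes_A F$, since $\otimes_A$ need not preserve injections --- is exactly the one subtlety worth flagging, and you handle it correctly.
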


\begin{theorem}\cite{Mcl04} Let $M$ be a 3-manifold with Heegaard splitting $M=V\cup_{\tilde{f}} W$ with $\partial V=\partial W = \Sigma$. Then we have that
$$G_t(M)=G_t(W)\bigotimes_{G_t(\Sigma)}G_t(V).$$
\end{theorem}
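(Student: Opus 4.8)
The strategy is to realize the manifold $M=V\cup_{\tilde f}W$ via a Heegaard diagram and apply Lemma \ref{Heeg2handSkein}, which already computes $G_t(M)$ as a quotient of $G_t(\Sigma)$ by the handle-slide submodules $S_V+S_W$. The target is to match this quotient against the tensor product $G_t(W)\otimes_{G_t(\Sigma)}G_t(V)$ using the Bourbaki isomorphism, Lemma \ref{Bourbaki}. The natural dictionary is $E=G_t(W)$ (a right $G_t(\Sigma)$-module) and $F=G_t(V)$ (a left $G_t(\Sigma)$-module), with $A=G_t(\Sigma)$. First I would invoke Lemma \ref{Sk2Hand} to identify $G_t(W)=G_t(\Sigma)/S_W$ and $G_t(V)=G_t(\Sigma)/S_V$, where I regard $G_t(\Sigma)$ as acting on itself by multiplication (on the right for $E$, on the left for $F$). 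Thus I set $E=G_t(\Sigma)$ with $E'=S_W$, and $F=G_t(\Sigma)$ with $F'=S_V$.

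\textbf{Key steps.}
With these choices, $E\otimes_A F=G_t(\Sigma)\otimes_{G_t(\Sigma)}G_t(\Sigma)\cong G_t(\Sigma)$ canonically, and under this identification the submodules $\mathrm{Im}(E'\otimes_A F)$ and $\mathrm{Im}(E\otimes_A F')$ become exactly the images of $S_W$ and $S_V$ inside $G_t(\Sigma)$. Lemma \ref{Bourbaki} then yields a $\BC(t)$-module isomorphism
\begin{align*}
G_t(W)\otimes_{G_t(\Sigma)}G_t(V)=(E/E')\otimes_A(F/F')\cong G_t(\Sigma)/(S_W+S_V).
\end{align*}
Since the right-hand side is precisely $G_t(M)$ by Lemma \ref{Heeg2handSkein}, this chain of isomorphisms gives $G_t(M)\cong G_t(W)\otimes_{G_t(\Sigma)}G_t(V)$, as desired. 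I would write this out with the explicit map sending $w\otimes v$ to the class of $\pi_W(w)\sqcup\pi_V(v)$ in $G_t(M)$, and check it is well defined by verifying it is balanced over $G_t(\Sigma)$, i.e. that it respects the relation $w\cdot'A\otimes v=w\otimes A\cdot v$; here equation \eqref{righttoleftaction} converting the right action into a left action via $f$ is the bookkeeping that makes the two sides agree geometrically.

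\textbf{The main obstacle.}
The delicate point is the compatibility of the algebra actions. The tensor product is balanced over $G_t(\Sigma)=G_t(\partial V)$, and $f:G_t(\partial V)\to G_t(\partial W)$ is only an \emph{anti}-homomorphism, so I must be careful that the right $G_t(\partial V)$-module structure on $G_t(W)$ defined through $f$ in equation \eqref{rightleftantihom} is genuinely a right action and that it matches the right multiplication on $E=G_t(\Sigma)$ used in Lemma \ref{Bourbaki}. Concretely, the hard part is confirming that under the surjection $G_t(\Sigma)\twoheadrightarrow G_t(W)$ the submodule $S_W$ of handle-slide relations for the $\beta$-curves coincides with the kernel $E'$ coming from the right-action description, and symmetrically for $S_V$ and $F'$; once the anti-homomorphism $f$ is tracked through correctly, the identification of submodules is the crux, and the remaining steps are formal applications of the two cited lemmas.
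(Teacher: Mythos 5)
Your proposal is correct and follows essentially the same route as the paper: identify $G_t(V)\cong G_t(\Sigma)/S_V$ and $G_t(W)\cong G_t(\Sigma)/S_W$ via the 2-handle and 3-handle lemmas, then apply the Bourbaki isomorphism with $E=F=A=G_t(\Sigma)$, $E'=S_W$, $F'=S_V$, and match the resulting quotient $G_t(\Sigma)/(S_W+S_V)$ with $G_t(M)$ via Lemma \ref{Heeg2handSkein}. The extra care you propose to take with the anti-homomorphism $f$ and the balanced condition is a reasonable supplement to what the paper records, but it does not change the argument.
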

\begin{proof}
Let $(\Sigma,\alpha,\beta)$ be a Heegaard diagram describing $M=V\cup_{\tilde{f}} W$. Note that $V$ can be obtained from $\Sigma$ by attaching a $2$-handle along $\alpha_i$ followed by attaching a 3-handle. From Lemma \ref{Sk2Hand} and Lemma \ref{Skein3ball} we have that $G_t(V)\cong G_t(\Sigma)/S_V$ where $S_V$ is the ideal generated by the difference of webs and handle slides of those webs across $\alpha_i$, as defined in lemmas \ref{Sk2Hand} and \ref{Heeg2handSkein}.\\

\noindent
Similarly $W$ can be obtained from $\Sigma$ by attaching a $2$-handle along $\beta_i$ followed by attaching a $3$-handle. From Lemma \ref{Sk2Hand} and Lemma \ref{Skein3ball} we have that $G_t(W)\cong G_t(\Sigma)/S_W$ where $S_W$ is the ideal generated by handle slides of webs across $\beta_i$.\\

\noindent
We will now use Lemma \ref{Bourbaki}. Let $E=G_t(\Sigma)$, $F=G_t(\Sigma)$, and $A=G_t(\Sigma)$. Here the left action of $A$ on $E$ is given by $a\cdot e= a*e$ and the right action of $A$ on $F$ is given by $a\cdot' f= f*a$ where $a\in A, e\in E, f\in F$. Now let $E'=S_W$ and $F'=S_V$, we see that $E/E'\cong G_t(V)$ and $F/F'\cong G_t(W)$.\\

\noindent
Now note that that $S_W\otimes_{G_t(\Sigma)}G_t(\Sigma)\cong S_W$ since $S_W$ is a $G_t(\Sigma)$-submodule. Similarly we have $G_t(\Sigma)\otimes_{\Sigma} S_V\cong S_V$. Using Lemma \ref{Bourbaki} we have
$$G_t(\Sigma)/S_W\otimes_{G_t(\Sigma)}G_t(\Sigma)/S_V\cong \left[G_t(\Sigma)\otimes_{G_t(\Sigma)}G_t(\Sigma)\right]/(S_W+S_V).$$
We see that $G_t(\Sigma)\otimes_{G_t(\Sigma)}G_t(\Sigma)\cong G_t(\Sigma)$.
Now note from Lemma \ref{Heeg2handSkein} we have that $G_t(\Sigma)/(S_W+S_V)\cong G_t(M)$. And hence we finally have
$$G_t(W)\otimes_{G_t(\Sigma)}G_t(V)\cong G_t(M).$$
\end{proof}
It should be noted that the proofs here are identical to the proofs found in the case of the Kauffman bracket skein module.

\subsection{Lens Spaces via Tensor Product}
The following lemma will be used to describe the tensor product structure of $G_t(L(p,q))$.
\begin{lemma}
Using the same notation introduced and used in Section \ref{ProjNotation}. We have the following equations,
\begin{align}
\label{pullx}
x_{m,n}*y_{r,s}&=t^{2rn}(m,n)_T\cdot y_{r,s}\\
\label{pully}
x_{m,n}*y_{r,s}&=t^{2ms}\wedge^{(r,s)}\cdot x_{m,n}.
\end{align}
\end{lemma}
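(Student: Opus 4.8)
The plan is to prove (\ref{pullx}) first, and then obtain (\ref{pully}) from it together with the commutation relation (\ref{switch}). The guiding observation is that both displayed identities merely convert the commutative product $*$ in $G_t(S^1\times\BD)$ into the canonical action $\cdot$ of $G_t(\BT)$; the only genuine content is bookkeeping the powers of $t$ that appear when one slides a $2$-labeled curve past $(m,n)_T$ on the torus and then projects into the solid torus.

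First I would establish the auxiliary identity
\[
(m,n)_T\cdot\wedge^{(r)}=t^{-2nr}\,x_{m,n}*\wedge^{(r)}.
\]
To get this, I start from the special case $s=0$ of (\ref{switch}), namely $(m,n)_T*\wedge^{(r,0)}=t^{-2nr}\,\wedge^{(r,0)}*(m,n)_T$ in $G_t(\BT)$, and apply $\pi$. By the projection formula (\ref{projectionaction}) together with Lemma \ref{projwedge}, the left-hand side becomes $(m,n)_T\cdot\wedge^{(r)}$, while the right-hand side becomes $t^{-2nr}\,\wedge^{(r,0)}\cdot x_{m,n}$. The key step is that $\wedge^{(r,0)}$ has no meridional component, so Lemma \ref{longprojez} turns this action into a product, $\wedge^{(r,0)}\cdot x_{m,n}=\wedge^{(r)}*x_{m,n}$, which commutativity of $G_t(S^1\times\BD)$ rewrites as $x_{m,n}*\wedge^{(r)}$. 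Next I would upgrade this to (\ref{pullx}) by reinserting the framing factor from Lemma \ref{projwedge}: since $y_{r,s}=t^{-2rs}\wedge^{(r)}$, multiplying the auxiliary identity by $t^{-2rs}$ and using bilinearity of both $*$ and $\cdot$ yields $x_{m,n}*y_{r,s}=t^{2rn}\,(m,n)_T\cdot y_{r,s}$, which is exactly (\ref{pullx}).

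Finally, for (\ref{pully}) I would apply $\pi$ to the full relation (\ref{switch}), $(m,n)_T*\wedge^{(r,s)}=t^{2(ms-nr)}\wedge^{(r,s)}*(m,n)_T$. Using (\ref{projectionaction}) on both sides gives $(m,n)_T\cdot y_{r,s}=t^{2(ms-nr)}\,\wedge^{(r,s)}\cdot x_{m,n}$. Substituting this into (\ref{pullx}) and collecting exponents, $t^{2rn}\cdot t^{2(ms-nr)}=t^{2ms}$, produces $x_{m,n}*y_{r,s}=t^{2ms}\,\wedge^{(r,s)}\cdot x_{m,n}$, which is (\ref{pully}).

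The main obstacle, and really the only subtle point, is the passage between the action $\cdot$ and the product $*$. Lemma \ref{longprojez} applies only to skeins with no meridional component, whereas a general $2$-labeled curve $\wedge^{(r,s)}$ acquires a meridional part when $s\neq 0$; this is precisely why I route the argument through $\wedge^{(r,0)}$ and recover the general case by absorbing the discrepancy into the scalar $t^{-2rs}$ supplied by Lemma \ref{projwedge}. Everything remaining is linearity, commutativity, and exponent arithmetic.
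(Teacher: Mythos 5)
Your proposal is correct, and both halves check out: the auxiliary identity $(m,n)_T\cdot\wedge^{(r)}=t^{-2nr}x_{m,n}*\wedge^{(r)}$ follows exactly as you say from projecting the $s=0$ case of (\ref{switch}) and invoking Lemma \ref{longprojez}, the rescaling by $t^{-2rs}$ is legitimate since the action is $\BC(t)$-linear and $y_{r,s}$ is a scalar multiple of $\wedge^{(r)}$ by Lemma \ref{projwedge}, and the exponent arithmetic $t^{2rn}\cdot t^{2(ms-nr)}=t^{2ms}$ for (\ref{pully}) is right. The route differs from the paper's in one genuine respect: to prove (\ref{pullx}) the paper does not project the commutation relation directly, but instead writes $y_{r,s}=\frac{t^{-2rs}}{2}(0)_T*\wedge^{(r)}$ using $(0,0)_T=2$ and then applies the already-established action formula of Theorem \ref{actthm} with $k=0$, $l=r$, whose two terms coincide and cancel the factor $\frac12$. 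Your argument bypasses Theorem \ref{actthm} entirely and works only from (\ref{switch}), (\ref{projectionaction}), Lemma \ref{projwedge} and Lemma \ref{longprojez}; since those are precisely the ingredients used to prove Theorem \ref{actthm} in the first place, you are in effect unwinding that dependency, which makes your proof more self-contained (it would survive even if the lemma were moved before Section \ref{secAct}), while the paper's version is shorter given that the action formula is already on the table. For (\ref{pully}) the two arguments are essentially the same: both commute $\wedge^{(r,s)}$ past $(m,n)_T$ via (\ref{switch}) and reduce to (\ref{pullx}). You are also right to flag the meridional-component hypothesis of Lemma \ref{longprojez} as the one subtle point; routing through $\wedge^{(r,0)}$ and absorbing the $s$-dependence into the scalar from Lemma \ref{projwedge} is exactly how the paper handles the same issue elsewhere.
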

\begin{proof}
First note and recall that from equations \ref{projwedge} and \ref{0guys} we have that
\begin{align}\label{lemlem410form}
\begin{split}
y_{r,s}&=t^{-2rs}\wedge^{(r)}\\  
(0)_T&=2.
\end{split}
\end{align}
Proving Equation \ref{pullx},
\begin{align*}
(m,n)_T\cdot y_{r,s}&=(m,n)_T\cdot  \left[\frac{t^{-2rs}}{2}(0)_T*\wedge^{(r)}\right]\hspace{15mm} (\ref{lemlem410form})\\
(\ref{actform})&=\frac{t^{-2rs}}{2} \left(t^{-2nr}\wedge^{(r)}*x_{m,n}+t^{-2nr}\wedge^{(r)}*x_{m,n}\right)\\
&=t^{-2r(n+s)}\wedge^{(r)}*x_{m,n}\\
&=t^{-2r(n+s)}x_{m,n}*\wedge^{(r)}\\
&=t^{-2rn}t^{-2rs}x_{m,n}*\wedge^{(r)}\\
(\ref{lemlem410form})&=t^{-2rn}x_{m,n}*y_{r,s}.
\end{align*}
Proving Equation \ref{pully},
\begin{align*}
\wedge^{(r,s)}\cdot x_{m,n}&=\left[\wedge^{(r,s)}*(m,n)_T\right]\cdot 1\hspace{15mm} (\ref{projectionaction})\\
(\ref{switch})&=\left[t^{-2(ms-nr)}(m,n)_T*\wedge^{(r,s)}\right]\cdot 1\\
&=t^{-2(ms-nr)}(m,n)_T\cdot y_{r,s}\\
(\ref{lemlem410form})&=t^{-2(ms-nr)}(m,n)_T\cdot\left[\frac{t^{-2rs}}{2}(0)_T*\wedge^{(r)}\right]\\
(\ref{actform})&=t^{-2(ms-nr)}t^{-2rn}x_{m,n}*y_{r,s}\\
&=t^{-2ms}x_{m,n}*y_{r,s}.
\end{align*}
\end{proof}
Now let us look at Heegaard splittings for $L(p,q)$. One has
$$L(p,q)=V\cup_{\tilde{f}} W$$
where $V$ and $W$ are solid tori, and the gluing map $\tilde{f}:\partial V=\BT \rightarrow \partial W =\BT$ can be described via the matrix
\begin{align}\label{gluematrix}
\begin{pmatrix}
a & p \\
b & q
\end{pmatrix}
\end{align}
As it will be used later on, note that since the gluing map is an orientation reversing homemorphism, the determinant of this matrix is $-1$. With this we have
$$G_t(L(p,q))=G_t(S^1\times \BD)\bigotimes_{G_t(\BT)} G_t(S^1\times \BD).$$
Recall that with the same set up as Subsection \ref{SkeinHeegSec}, we view the $G_t(S^1\times \BD)$ on the right as a left module over $G_t(\BT)$ using the canonical left action and the $G_t(S^1\times \BD)$ on the left as a right module over $G_t(\BT)$ using the induced map $f:G_t(\BT)\rightarrow G_t(\BT)$. Details on these actions were discussed in Subsection \ref{SkeinHeegSec}. The map $f$ is also described by using the matrix listed as Equation \ref{gluematrix}. With this, we have the following equations by using the definition of tensor product of left and right modules over an algebra:
\begin{align}\label{mainbalancetens}
\begin{split}
v\otimes (m,n)_T\cdot w &= (m,n)_T\cdot' v\otimes w \\\\
v\otimes \wedge^{(r,s)}\cdot w &= \wedge^{(r,s)}\cdot' v\otimes w,
\end{split}
\end{align}
where $v\in G_t(S^1\times \BD)$ on the left, $w\in G_t(S^1\times \BD)$ on the right, and $m,n,r,s\in \BZ$.
Using this, we also have the following equations that describe the tensor product structure:
\begin{align}
\label{balance}
\begin{split}
1\otimes x_{m,n} &= 1\otimes (m,n)_T\cdot 1  \hspace{15mm} (\ref{projectionaction})\\
(\ref{mainbalancetens})&=(m,n)_T\cdot' 1 \otimes 1\\
(\ref{righttoleftaction})&=(am+pn,bm+qn)_T\cdot 1 \otimes 1\\
(\ref{projectionaction})&=x_{am+pn,bm+qn}\otimes 1,\\\\
1\otimes y_{m,n} &= 1\otimes \wedge^{(m,n)}\cdot 1\\
(\ref{mainbalancetens})&=\wedge^{(m,n)}\cdot' 1 \otimes 1\\
(\ref{righttoleftaction})&=\wedge^{(am+pn,bm+qn)}\cdot 1 \otimes 1\\
(\ref{projectionaction})&=y_{am+pn,bm+qn}\otimes 1,\\\\
1\otimes x_{m,n}*y_{r,s} &=1\otimes t^{2rn}(m,n)_T\cdot y_{r,s}\hspace{15mm}(\ref{pullx})\\
(\ref{mainbalancetens})&=t^{2rn}(m,n)_T\cdot' 1\otimes y_{r,s}\\
(\ref{righttoleftaction})&=t^{2rn}(am+pn,bm+qn)_T\cdot 1\otimes y_{r,s}\\
(\ref{projectionaction})&=t^{2rn}x_{am+pn,bm+qn}\otimes y_{r,s}\\\\
1\otimes x_{m,n}*y_{r,s} &=1\otimes t^{2ms}\wedge^{(r,s)}\cdot x_{m,n}\hspace{15mm}(\ref{pully})\\
(\ref{mainbalancetens})&=t^{2ms}\wedge^{(r,s)}\cdot' 1\otimes x_{m,n}\\
(\ref{righttoleftaction})&=t^{2ms}\wedge^{(ar+ps,br+qs)}\cdot 1\otimes x_{m,n}\\
(\ref{projectionaction})&=t^{2ms}y_{ar+ps,br+qs}\otimes x_{m,n}.
\end{split}
\end{align}
We see that balancing things across the tensor product is just the act of sending skeins in one solid torus to the other solid torus through the gluing map of the solid tori.

It should also be noted that from equations \ref{mainbalancetens} we have that $\{(n_1)*\wedge^{(n_2)}\otimes 1\}$ for $n_1,n_2\in\BZ$ forms a spanning set for $G_t(L(p,q))$. Since for any element $v\otimes w$ one can balance $w$ to the other side of the tensor product using equations \ref{mainbalancetens}.

\section{Proof of Theorem \ref{lenthm}}
\label{secProofs}
\subsection{Overview and Statements of the Main Lemmas \ref{mainlemma} and \ref{lexpression}}
The main objective is to show that for all $n_1,n_2\in\BZ$ we have
$$(n_1)*\wedge^{(n_2)}\otimes 1\in V$$
where $V:=span\{(n)*\wedge^{(m)}\otimes 1\}$ as a $\BC(t)$-vector space, $n\in\{0,\dots,\left\lfloor{\frac{p}{2}}\right \rfloor\}$, $m\in \{-\left\lfloor{\frac{p}{2}}\right \rfloor,\dots ,\left\lfloor{\frac{p}{2}}\right \rfloor\}$ and $\left\lfloor{\frac{p}{2}}\right \rfloor$ meaning the greatest integer less than or equal to $\frac{p}{2}$.
We use two main important lemmas to do this, we state them here and then establish them in the next subsections.\\

The following lemma does the bulk of the work in showing $(n_1)*\wedge^{(n_2)}\otimes 1\in V$, as it shows that the elements $x_{am+kp,bm+kq}*y_{ar+ps,br+qs}\otimes 1$ are contained in particular vector space. The proof of it uses the fact that one can actually reduce the subscripts of $x_{am+kp,bm+kq}*y_{ar+ps,br+qs}\otimes 1$ at the cost of scalars.
\begin{lemma}\label{mainlemma}
For every $m,k,r,s\in \BZ$ one has $$x_{am+kp,bm+kq}*y_{ar+ps,br+qs}\otimes 1 \in \Tilde{V},$$
where
$$\Tilde{V}=span\left\{(i)*\wedge^{(j)}\otimes 1 \hspace{2mm}|\hspace{2mm} -\left\lfloor{\frac{p}{2}}\right \rfloor\leq i,j\leq \left\lfloor{\frac{p}{2}}\right \rfloor\right\}$$
as a $\BC(t)$-vector space.
\end{lemma}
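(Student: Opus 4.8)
The plan is to establish membership in $\tilde V$ by a descent on the size of the homology indices, using the balancing identities \eqref{mainbalancetens} and \eqref{righttoleftaction} to trade a large index for a smaller one modulo $p$ at the cost of an invertible scalar in $\BC(t)$ together with strictly lower-order terms. Since the gluing matrix satisfies $aq-bp=-1$, decreasing $k$ by $1$ replaces $(am+kp,bm+kq)$ by $(am+(k-1)p,bm+(k-1)q)$, i.e.\ it shifts the longitudinal homology by exactly $p=|H_1(L(p,q))|$; the same holds for $s$ and the $\wedge$-factor. Thus the natural induction is on $|k|+|s|$ (or, after noting that $(m,k)\mapsto(am+kp,bm+kq)$ is a bijection of $\BZ^2$ because the determinant is $-1$, on $\max(|M|,|R|)$ where $M,R$ are the longitudinal indices of the $x$- and $y$-factors).

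First I would strip the non-homological data. By Lemma~\ref{projwedge} the factor $y_{ar+ps,br+qs}$ is the scalar $t^{-2(ar+ps)(br+qs)}$ times $\wedge^{(ar+ps)}$, so the second subscripts enter only as $t$-powers; likewise the second subscript of $x_{am+kp,bm+kq}$ contributes only twisting, governed explicitly by Lemmas~\ref{m0init}, \ref{initialx} and \ref{recx}. This reduces the task to controlling the two longitudinal indices, and it lets me pass freely between the product form $x_{M,N}*y_{R,S}$ and the action form via \eqref{pullx}--\eqref{pully} and the projection formula \eqref{projectionaction}.

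The heart of the argument is the descent step, which rewrites an out-of-window element as a scalar multiple of an in-range element plus lower-order terms. To lower the longitudinal index by $p$ I would act by the torus class whose image under the gluing is the meridian $(p,q)$ of the opposite solid torus: acting through $\cdot'$ by $(0,1)_T$ equals, by \eqref{righttoleftaction}, acting by $f((0,1)_T)=(p,q)_T$, which shifts the homology by $p$. Pushing this across the tensor with \eqref{mainbalancetens} and evaluating the resulting action by the Frohman-Gelca formula \eqref{FGform} and Theorem~\ref{actthm}, whose right-hand side is a sum of the two terms $\wedge^{(l)}*x_{m+k,n}$ and $\wedge^{(k+l)}*x_{m-k,n}$, lets me solve for the top term in terms of a balanced (hence index-reducing) product term and a term of strictly smaller longitudinal index, exactly the descent used for the Kauffman bracket skein module in \cite{FG00}. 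Iterating this brings the $x$-index into $[0,p)$ and the $\wedge$-index into a window of width $p$, after which the orientation-reversal relations \eqref{revorien} and \eqref{revorien2} fold them into the symmetric range $\left[-\left\lfloor{\frac{p}{2}}\right\rfloor,\left\lfloor{\frac{p}{2}}\right\rfloor\right]$.

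For the base case, once both indices lie in $\left[-\left\lfloor{\frac{p}{2}}\right\rfloor,\left\lfloor{\frac{p}{2}}\right\rfloor\right]$ I expand $x_{M,N}$ in the solid-torus basis using Lemmas~\ref{m0init}, \ref{initialx}, \ref{recx} and multiply by $\wedge^{(R)}$ using \eqref{solidezrelations}; every resulting basis element $(i)*\wedge^{(j)}$ has indices inside the window, giving membership in $\tilde V$. The step I expect to be the main obstacle is precisely this descent: one must fix a well-founded measure on the pair of longitudinal indices and verify that \emph{every} term produced, both the explicit $x_{M-p}$-type term and all terms coming from expanding the balanced action of Theorem~\ref{actthm}, either has strictly smaller measure or already lies in the window, while simultaneously bookkeeping the $\BC(t)$-scalars and the reflections so that reducing one index never re-enlarges the other past $\left\lfloor{\frac{p}{2}}\right\rfloor$.
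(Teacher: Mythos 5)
You have correctly isolated the two mechanisms the paper's proof runs on: multiples of the $(p,q)$-curve balance across the tensor product to scalars (your computation $(0,1)_T\cdot' v=(p,q)_T\cdot v$ is exactly the content of Equation \eqref{reducingab}), and the Frohman--Gelca formula turns products into sums so that a recursion can be set up, with the absolute-remainder bound of Lemma \ref{RazvanNumbers} landing the indices in $[-\lfloor p/2\rfloor,\lfloor p/2\rfloor]$. However, the obstacle you flag in your last sentence is a genuine gap, not bookkeeping, and the measure you propose does not resolve it. Peeling one copy of $(p,q)$ off $x_{M,N}$ via \eqref{FGform} gives $(M,N)_T=(p,q)_T*(M-p,N-q)_T-(2p-M,2q-N)_T*\wedge^{(M-p,N-q)}$: the main term drops the longitudinal index by $p$, but the correction term shifts it by $2p$. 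Hence $|2p-M|<|M|$ only when $|M|>p$, and in the critical range $\lfloor p/2\rfloor<M\le p$ the correction term has index $|2p-M|\ge p>M$; reducing that term again returns you to index $\pm M$ (now carrying an extra wedge factor), so a descent on the maximum of the longitudinal indices never terminates.

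The paper avoids this by not inducting on the index itself. Using $\gcd(a,p)=1$ (Lemma \ref{coprime}) it writes the longitudinal index as $am+kp$ and inducts on $m$, with base cases $m=0,1$ computed from Lemmas \ref{m0init} and \ref{initialx}, and with the two-step recursion $m\mapsto m-1,m-2$ coming from \eqref{FGform} together with its variant \eqref{FGadj}; the coefficient $k$ of $p$ and the wedge index are reduced independently, at no cost to $m$, by the balancing identity \eqref{reducingab} and by Lemma \ref{lredy}. In that scheme every correction term produced by the product-to-sum formula has strictly smaller $m$, which is what makes the induction close. You in fact mention the bijection $(m,k)\mapsto(am+kp,bm+kq)$ parenthetically but then revert to measuring the actual indices; to repair your argument you would need to adopt $m$ itself as the induction variable (essentially reproducing the paper's proof) or supply some other well-founded measure that decreases on the $(2p-M)$-correction term. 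As written, the proposal does not go through.
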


The following lemma allows us to express $(n_1)*\wedge^{(n_2)}\otimes 1$ in terms of projection elements $x_{m,n}$ and $y_{r,s}$. And in doing so, this allows us to use Lemma \ref{mainlemma} to reduce the subscripts as a means towards the proof Theorem \ref{lenthm}.
\begin{lemma}\label{lexpression}
For any $n_1,n_2\in \BZ$ there exists $m,k,r,s\in \BZ$ such that we have
\begin{align}\label{expression}
(n_1)*\wedge^{(n_2)}\otimes 1&=x_{ma+kp,mb+kq}*y_{ra+ps,br+qs}\otimes c 
+ \sum a_{ij}(i)*\wedge^{(j)} \otimes 1
\end{align}
where $c,a_{ij}\in \BC(t)$ and
$i$ is strictly bounded by $-n_1$ and $n_1$, and $\sum a_{ij}$ is a finite sum.
\end{lemma}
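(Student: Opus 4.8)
The plan is to isolate the ``top'' longitudinal part of $(n_1)*\wedge^{(n_2)}$ as a single balanced term of the shape demanded by Lemma~\ref{mainlemma}, pushing all remaining contributions into terms of strictly smaller longitudinal index. The first step is to turn the action recursion of Lemma~\ref{recx} into an honest product recursion inside the commutative algebra $G_t(S^1\times\BD)$. Since $(1,0)_T$ and $\wedge^{(1,0)}$ have no meridional component, Equation~(\ref{longprojez}) gives $(1,0)_T\cdot U=(1)*U$ and $\wedge^{(1,0)}\cdot U=\wedge^{(1)}*U$, so that fixing the second index $N$ turns Lemma~\ref{recx} into
\[
x_{m+1,N}=(1)*x_{m,N}-\wedge^{(1)}*x_{m-1,N},
\]
a Chebyshev-type recursion whose base cases $x_{0,N}=t^{N}+t^{-N}$ and $x_{1,N}=t^{N}(1)$ are supplied by Lemmas~\ref{m0init} and~\ref{initialx}.

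Next, using the solid-torus relations~(\ref{solidezrelations}), I would prove by induction on $M$ that
\[
x_{M,N}=t^{N}(M)+\sum_{|i|<M}c_{ij}\,(i)*\wedge^{(j)}\qquad(M\ge 1),
\]
that is, the unique top term is $t^{N}(M)$ and every other term $(i)*\wedge^{(j)}$ satisfies $|i|<M$; the negative range $M\le-1$ is symmetric via the second recursion in Lemma~\ref{recx}. With this in hand I choose the indices: the gluing matrix~(\ref{gluematrix}) has determinant $-1$, so $aq-bp=-1$ forces $\gcd(a,p)=1$, and B\'ezout lets me pick $m,k$ with $ma+kp=n_1$ and $r,s$ with $ra+ps=n_2$. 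Setting $M=n_1$, $N=mb+kq$, $R=n_2$, $S=rb+qs$ makes the subscripts $(ma+kp,mb+kq)$ and $(ra+ps,rb+qs)$ exactly those appearing in the statement and in Lemma~\ref{mainlemma}.

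Finally, using $y_{n_2,S}=t^{-2n_2S}\wedge^{(n_2)}$ from Lemma~\ref{projwedge}, I multiply the leading-term formula by $y_{n_2,S}$; since right multiplication by $\wedge^{(n_2)}$ only shifts the $\wedge$-index and preserves $|i|<n_1$, this gives
\[
x_{n_1,N}*y_{n_2,S}=t^{\,N-2n_2S}(n_1)*\wedge^{(n_2)}+\sum_{|i|<n_1}b_{ij}\,(i)*\wedge^{(j)}.
\]
Solving for $(n_1)*\wedge^{(n_2)}$, tensoring with $1$, and setting $c=t^{\,2n_2S-N}$ produces~(\ref{expression}). The cases $n_1=0$ and $n_1<0$ are dealt with separately: when $n_1=0$ the scalar $x_{0,N}=t^{N}+t^{-N}$ is nonzero, so $(0)*\wedge^{(n_2)}=\wedge^{(n_2)}$ equals $x_{0,N}*y_{n_2,S}$ up to an explicit scalar and~(\ref{expression}) holds with empty sum, while $n_1<0$ mirrors the positive case.

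The main obstacle is the inductive leading-term analysis in the second step: one must check that the recursion can only reach longitudinal index $m+1$ by multiplying the unique top term $t^{N}(m)$ of $x_{m,N}$ by $(1)$, whereas the term $\wedge^{(1)}*x_{m-1,N}$ and all products involving the lower terms of $x_{m,N}$ land at index $\le m$. Establishing that this single clean top term $t^{N}(M)$ persists, with every correction strictly smaller in $|i|$, is precisely what lets the lower-order terms be absorbed into the $\sum a_{ij}(i)*\wedge^{(j)}\otimes 1$ summand and fed into the downward induction used to prove Theorem~\ref{lenthm}.
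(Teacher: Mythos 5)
Your proposal is correct and follows essentially the same route as the paper: the paper's own proof is a two-step citation of its Lemma \ref{coprime} (B\'ezout from $\det = -1$, giving $n_1 = ma+kp$, $n_2 = ra+sp$) and Lemma \ref{Highestpower} (the leading-term expansion of $x_{m,n}$ obtained from the recursion of Lemma \ref{recx} and the initial conditions of Lemmas \ref{m0init} and \ref{initialx}), followed by balancing $\wedge^{(ra+sp)}\otimes 1 = y_{ra+sp,br+qs}\otimes t^{-2(ra+sp)(br+qs)}$. You have simply unpacked and re-proved those two ingredients inline, with the small bonus of identifying the top coefficient as exactly $t^{N}$ rather than merely invertible.
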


In the next subsections we go over and establish the lemmas used to prove the above lemmas.\\

For convenience, we briefly summarize the proof Theorem \ref{lenthm}. One uses Lemma \ref{lexpression} to write the elements $(n_1)*\wedge^{(n_2)}\otimes 1$ in terms of $x_{ma+kp,mb+kq}*y_{ra+ps,br+qs}$. Then using Lemma \ref{mainlemma} one shows that it is contained in the wanted vector space.\\


\subsection{Lemmas Used to Prove Lemma \ref{mainlemma}}

The following lemma will be the main tool in reducing the subscripts of $x_{ma+pk,mb+qk}*y_{ra+sp,rb+sq}\otimes 1$, and is just an algebraic description of the fact $(p,q)$ curves now bound a disk due to the gluing of the solid tori. 

\begin{lemma}
For every $k\in \BZ$ and $u\in G_t(S^1\times \BD)$ one has the identities
\begin{align}\label{reducingab}
\begin{split}
(a+kp,b+kq)_T\cdot u \otimes 1 &=(a,b)_T\cdot u\otimes t^k.\\\\
\wedge^{(a+kp,b+kq)}\cdot u \otimes 1 &=\wedge^{(a,b)}\cdot u\otimes t^{-2k},
\end{split}
\end{align}
in the setting of $G_t(S^1\times \BD)\bigotimes_{G_t(\BT)} G_t(S^1\times \BD).$
\end{lemma}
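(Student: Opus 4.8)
The plan is to realize both identities as a single application of the balancing relation \eqref{mainbalancetens} across the tensor product, preceded by the observation that the curves appearing on the left are $f$-images of simple curves in the opposite solid torus. Since the gluing antihomomorphism $f:G_t(\BT)\to G_t(\BT)$ acts on homology through the matrix in \eqref{gluematrix}, sending $(m,n)\mapsto(am+pn,bm+qn)$, I first record the identities
$$f((1,k)_T)=(a+kp,b+kq)_T,\qquad f((1,0)_T)=(a,b)_T,$$
and likewise $f(\wedge^{(1,k)})=\wedge^{(a+kp,b+kq)}$ and $f(\wedge^{(1,0)})=\wedge^{(a,b)}$. Geometrically this just says that $(a+kp,b+kq)$ differs from $(a,b)$ by $k$ copies of the $(p,q)$-curve, which is exactly the curve becoming a meridian after gluing; the scalar in each identity is the price of unwinding those meridional twists.

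Next I would push the action through the tensor symbol. Writing the left-hand side as $f((1,k)_T)\cdot u\otimes 1$ and applying \eqref{righttoleftaction} together with \eqref{mainbalancetens} with $v=u$ and $w=1$, I obtain
$$(a+kp,b+kq)_T\cdot u\otimes 1=u\otimes (1,k)_T\cdot 1.$$
By \eqref{projectionaction} the right factor is $(1,k)_T\cdot 1=x_{1,k}$, which Lemma \ref{initialx} evaluates to $t^{k}(1)$, so the whole expression equals $t^{k}\,(u\otimes(1))$. Running the identical computation with $k=0$ gives $(a,b)_T\cdot u\otimes 1=u\otimes(1)$, hence $(a,b)_T\cdot u\otimes t^{k}=t^{k}\,(u\otimes(1))$ as well, and comparing the two displays proves the first identity.

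The second identity is proved verbatim with wedges in place of $T$-curves: the same balancing step yields $\wedge^{(a+kp,b+kq)}\cdot u\otimes 1=u\otimes\wedge^{(1,k)}\cdot 1=u\otimes y_{1,k}$, Lemma \ref{projwedge} gives $y_{1,k}=t^{-2k}\wedge^{(1)}$, and the $k=0$ case supplies the matching right-hand side $\wedge^{(a,b)}\cdot u\otimes t^{-2k}=t^{-2k}\,(u\otimes\wedge^{(1)})$.

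I expect the only real difficulty to be bookkeeping rather than mathematics. One must keep straight that \eqref{mainbalancetens} trades a left action on the right tensor factor for the $\cdot'$ right action on the left factor, that \eqref{righttoleftaction} rewrites that $\cdot'$ action as the canonical left action of $f$ applied to the element, and that the resulting scalars $t^{k}$ and $t^{-2k}$ must be recorded on the correct side of $\otimes$. A secondary point worth checking is that $f$ carries the single basis curves $(1,k)_T$ and $\wedge^{(1,k)}$ to the stated images with no stray framing factor; this is already built into the derivations of \eqref{balance}, so I would simply invoke those rather than re-derive them. Notably, the determinant hypothesis $aq-bp=-1$ is not needed here, since only the action of the matrix on homology classes enters.
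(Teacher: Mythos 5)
Your proposal is correct and follows essentially the same route as the paper: identify $(a+kp,b+kq)_T$ and $\wedge^{(a+kp,b+kq)}$ as the $f$-images of $(1,k)_T$ and $\wedge^{(1,k)}$, balance across the tensor product via \eqref{righttoleftaction} and \eqref{mainbalancetens}, and evaluate $x_{1,k}=t^k(1)$ and $y_{1,k}=t^{-2k}\wedge^{(1)}$ in the solid torus. The only cosmetic difference is that the paper balances the scalar back across the tensor symbol as $(1,0)_T\cdot t^k$ (resp.\ $\wedge^{(1,0)}\cdot t^{-2k}$) to land directly on the stated right-hand side, whereas you evaluate both sides to $t^k(u\otimes(1))$ (resp.\ $t^{-2k}(u\otimes\wedge^{(1)})$) and compare.
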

\begin{proof}
\begin{align*}
(a+kp,b+kq)_T\cdot u \otimes 1 
&=(1,k)_T\cdot' u\otimes 1\hspace{15mm} (\ref{righttoleftaction},\ref{gluematrix})\\
(\ref{mainbalancetens})&=u\otimes (1,k)_T\cdot 1\\
(\ref{projectionaction})&=u\otimes x_{1,k}\\
(\ref{initialxform})&=u\otimes t^k x_{1,0}\\
(\ref{projectionaction})&=u\otimes (1,0)_T\cdot t^k\\
(\ref{mainbalancetens})&=(1,0)_T\cdot' u\otimes t^k\\
(\ref{righttoleftaction})&=(a,b)_T\cdot u\otimes t^k. 
\end{align*}
\begin{align*}
\wedge^{(a+kp,b+kq)}\cdot u \otimes 1 &= \wedge^{(1,k)}\cdot' u \otimes 1 \hspace{15mm} (\ref{righttoleftaction},\ref{gluematrix})\\ 
(\ref{mainbalancetens})&=  u \otimes \wedge^{(1,k)}\cdot 1\\
(\ref{projectionaction})&= u\otimes t^{-2k} \wedge^{(1)}\\
(\ref{projectionaction})&=u\otimes \wedge^{(1,0)}\cdot t^{-2k}\\
(\ref{mainbalancetens})&=\wedge^{(1,0)}\cdot' u\otimes  t^{-2k}\\
(\ref{righttoleftaction})&=\wedge^{(a,b)}\cdot u \otimes t^{-2k}.
\end{align*}
\end{proof}

The following lemma will give us a range in which we can lower the numbers $n,m$ in the context of $(n)$ and $\wedge^{(m)}$.
\begin{lemma}\label{RazvanNumbers}
Let $r\in \BZ$, $a,p$ be from the gluing matrix of our lens space, and $s_0$ be an integer that minimizes the value of $|ra+ps_0|$. The minimum value $|ra+ps_0|$ can take is at most $\left\lfloor{\frac{p}{2}}\right \rfloor$.
\end{lemma}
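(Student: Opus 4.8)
The plan is to recognize the statement as the elementary number-theoretic fact that every residue class modulo $p$ contains an integer whose absolute value is at most $\left\lfloor{\frac{p}{2}}\right\rfloor$. First I would note that as $s_0$ ranges over $\BZ$, the expression $ra+ps_0$ runs through exactly the integers congruent to $ra$ modulo $p$, so minimizing $|ra+ps_0|$ amounts to finding the smallest-in-absolute-value member of the coset $ra+p\BZ$. It therefore suffices to exhibit a single choice of $s_0$ for which $|ra+ps_0|\le\left\lfloor{\frac{p}{2}}\right\rfloor$, since the true minimum can only be smaller.

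To produce such an $s_0$, I would reduce $ra$ modulo $p$ to its standard representative $c$ with $0\le c\le p-1$, so that $c=ra+ps_0$ for the appropriate $s_0$ (namely $ps_0=c-ra$). If $c\le\left\lfloor{\frac{p}{2}}\right\rfloor$, this representative already realizes the bound. Otherwise $c>\left\lfloor{\frac{p}{2}}\right\rfloor$, and I would instead take the representative $c-p$ (adjusting $s_0$ by one), whose absolute value is $p-c$; a short check splitting on the parity of $p$ shows $p-c\le\left\lfloor{\frac{p}{2}}\right\rfloor$ throughout this range. Combining the two cases yields the claimed bound on the minimum.

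The argument involves no genuine obstacle; the only point requiring care is the boundary case in the parity split, for instance the value $c=p/2$ when $p$ is even, where one must confirm that $\left\lfloor{\frac{p}{2}}\right\rfloor$ itself, rather than a strict inequality, is exactly the bound that holds. This is precisely the standard statement that the balanced residues modulo $p$ fill out the symmetric range from $-\left\lfloor{\frac{p}{2}}\right\rfloor$ to $\left\lfloor{\frac{p}{2}}\right\rfloor$.
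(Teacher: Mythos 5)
Your proposal is correct and follows essentially the same route as the paper: divide $ra$ by $p$ to get a remainder in $[0,p-1]$, keep it if it is at most $\left\lfloor \frac{p}{2}\right\rfloor$, and otherwise subtract $p$ to land in the symmetric range. The paper likewise identifies this as the standard absolute remainder argument, so there is nothing to add.
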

\begin{proof}
We use the Euclidean algorithm on $ra$ via $p$. Doing so we can find $s\in \BZ$ and $w\in\BZ$ such that
$$ra=ps+w$$
and $0\geq w< p$. Note that $w=ra-ps$.

If $|w|\leq \left\lfloor{\frac{p}{2}}\right \rfloor$, then we are done. If $\left\lfloor{\frac{p}{2}}\right \rfloor <|w|< p$, then let $w'=w-p$. We see that $|w'|\leq \left\lfloor{\frac{p}{2}}\right \rfloor$, and $w'=ra-p(s+1)$.

This result is also known as the absolute remainder theorem, the proof presented here is not original, and is generally known.




\end{proof}

The following lemma allows us to reduce the subscripts of $y_{ar+ps,br+qs}$ while keeping
$x_{ma+pk,mb+qk}$ unchanged in the term $x_{ma+pk,mb+qk}*y_{ra+sp,rb+sq}\otimes 1$. This is possible because we can rearrange $y_{ra+sp,rb+sq}$ in a way that takes advantage of equations \ref{reducingab}. After this one then projects it back down to the solid tori. Explicit details are found in the proof.
\begin{lemma}\label{lredy}
Let $s_0$ be the lowest integer such that $|ar+ps_0|$ is minimized. Then for any $m,n,r,s\in \BZ$, there exist a constant $\eta\in \BZ$ such that
\begin{align}
\label{redy}
x_{m,n}*y_{ar+ps,br+qs}\otimes 1=t^\eta(x_{m,n}*y_{ra+ps_0,rb+qs_0} \otimes 1)
\end{align}
\end{lemma}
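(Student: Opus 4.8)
The plan is to reduce the second subscript of $y_{ar+ps,br+qs}$ from $s$ to $s_0$ by transporting the difference $(ar+ps,br+qs)-(ar+ps_0,br+qs_0)=(s-s_0)(p,q)$ across the tensor product, exploiting that $(p,q)$ is precisely the meridian bounding a disk after the gluing. The conceptual obstruction to doing this naively is that $y_{ar+ps,br+qs}$ and $y_{ar+ps_0,br+qs_0}$ are genuinely distinct elements of $G_t(S^1\times\BD)$ (by \ref{projwedge} they are nonzero scalar multiples of $\wedge^{(ar+ps)}$ and $\wedge^{(ar+ps_0)}$, which have different labels), so the reduction cannot take place inside a single solid torus and must route through the balancing relations \ref{mainbalancetens}. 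A secondary difficulty is that $y_{ar+ps,br+qs}$ appears multiplied into $x_{m,n}$ via the algebra product $*$, whereas the balancing relations are phrased in terms of the module action $\cdot$; so the product must first be traded for an action. Throughout I treat $s_0$ as a fixed target integer (its minimality plays no role here, only later via \ref{RazvanNumbers}).

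First I would invoke \ref{pully} to convert the product into an action on the left solid torus:
\[
x_{m,n}*y_{ar+ps,br+qs}\otimes 1 = t^{2m(br+qs)}\left(\wedge^{(ar+ps,br+qs)}\cdot x_{m,n}\right)\otimes 1 .
\]

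Next, setting $k=s-s_0$ and observing that $(ar+ps,br+qs)$ is the image of $(r,s)$ under the gluing matrix \ref{gluematrix}, I would run the same chain of identities that proves \ref{reducingab}, but with the general slope $(r,s)$ in place of $(1,k)$: apply \ref{righttoleftaction} to rewrite $\wedge^{(ar+ps,br+qs)}\cdot$ as $\wedge^{(r,s)}\cdot{}'$, balance with \ref{mainbalancetens} to move the wedge onto the right factor acting on $1$, identify $\wedge^{(r,s)}\cdot 1 = y_{r,s}$ via \ref{projectionaction}, and finally apply \ref{projwedge} to get $y_{r,s}=t^{-2rs}\wedge^{(r)}$. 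Carrying out the identical computation for $s$ and for $s_0$ and comparing yields
\[
\wedge^{(ar+ps,br+qs)}\cdot x_{m,n}\otimes 1 = t^{-2r(s-s_0)}\left(\wedge^{(ar+ps_0,br+qs_0)}\cdot x_{m,n}\otimes 1\right).
\]
This is the heart of the argument and the step I expect to demand the most care, since one must verify that the computation behind \ref{reducingab} still goes through verbatim for an arbitrary slope $r$ rather than the special case $r=1$ recorded there.

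Finally I would convert back using \ref{pully} in the inverted form $\wedge^{(ar+ps_0,br+qs_0)}\cdot x_{m,n}=t^{-2m(br+qs_0)}\,x_{m,n}*y_{ar+ps_0,br+qs_0}$ and collect the three accumulated powers of $t$. Their exponents combine to
\[
\eta = 2m(br+qs)-2r(s-s_0)-2m(br+qs_0)=2(s-s_0)(mq-r),
\]
which is manifestly an integer, establishing \ref{redy}. Since the lemma asserts only the existence of some $\eta\in\BZ$, verifying that this closed form is integral suffices; the one place a slip could occur is in the sign and exponent bookkeeping across the two applications of \ref{pully}, and the explicit formula for $\eta$ serves precisely as a guard against such an error.
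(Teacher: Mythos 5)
Your proof is correct and reaches the same identity by the same overall strategy as the paper: use \ref{pully} to trade the product $x_{m,n}*y_{ar+ps,br+qs}$ for the action of $\wedge^{(ar+ps,br+qs)}$, push a multiple of $(p,q)$ across the tensor product to lower the wedge label from $s$ to $s_0$, and convert back with \ref{pully}. Where you genuinely diverge is the middle step. The paper does not generalize \ref{reducingab}: it first splits the wedge via \ref{wedgecombinesplit} as $\wedge^{(ra+ps,rb+qs)}=t^{\lambda_1}\wedge^{(a+p(s-s_0),b+q(s-s_0))}*\wedge^{((r-1)a+ps_0,(r-1)b+qs_0)}$, applies \ref{reducingab} exactly as stated to the first factor (which has the required form $\wedge^{(a+kp,b+kq)}$ with $k=s-s_0$), and then recombines the wedges. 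You instead balance the entire wedge across the tensor product in one pass --- rewriting $\wedge^{(ar+ps,br+qs)}\cdot u\otimes 1$ as $u\otimes y_{r,s}$ and returning through $y_{r,s_0}$ --- which amounts to proving the arbitrary-slope version of \ref{reducingab} that the paper deliberately sidesteps; your sketch of that chain (\ref{righttoleftaction}, \ref{mainbalancetens}, \ref{projectionaction}, \ref{projwedge}) is valid, since $f(\wedge^{(r,s)})=\wedge^{(ar+ps,br+qs)}$ for any slope. Your route buys a cleaner computation and the closed form $\eta=2(s-s_0)(mq-r)$, which does agree with the paper's $\lambda_1+\lambda_2+\eta_1+\eta_2$ after simplifying with $aq-bp=-1$ (and after accounting for the factor $t^{-2(s-s_0)}$ that the paper's displayed computation leaves in the right tensor slot but omits from its stated $\eta$); the paper's route buys strict reuse of an already-proved lemma at the cost of messier scalar bookkeeping. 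Your observation that the minimality of $s_0$ is irrelevant to this lemma is also correct.
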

\begin{proof}
\begin{align*}
x_{m,n}*y_{ar+ps,br+qs}\otimes 1&=\wedge^{(ar+ps,br+qs)}\cdot t^{\eta_1}x_{m,n} \otimes 1 \quad \quad \text{(\ref{pully})}\\
\text{(\ref{wedgecombinesplit})}&=t^{\lambda_1}[\wedge^{(a+p(s-s_0),b+q(s-s_0))}*\wedge^{((r-1)a+ps_0,(r-1)b+qs_0)}]\cdot t^{\eta_1}x_{m,n} \otimes 1\\
&=t^{\lambda_1}\wedge^{(a+p(s-s_0),b+q(s-s_0))}\cdot[\wedge^{((r-1)a+ps_0,(r-1)b+qs_0)}\cdot t^{\eta_1}x_{m,n}] \otimes 1\\
\text{(\ref{reducingab})}&=t^{\lambda_1}\wedge^{(a,b)}\cdot[\wedge^{((r-1)a+ps_0,(r-1)b+qs_0)}\cdot t^{\eta_1}x_{m,n}] \otimes t^{-2(s-s_0)}\\
&=t^{\lambda_1}[\wedge^{(a,b)}*\wedge^{((r-1)a+ps_0,(r-1)b+qs_0)}]\cdot t^{\eta_1}x_{m,n} \otimes t^{-2(s-s_0)}\\
\text{(\ref{wedgecombinesplit})}&=t^{\lambda_1}t^{\lambda_2}\wedge^{(ra+ps_0,rb+qs_0)}\cdot t^{\eta_1}x_{m,n} \otimes t^{-2(s-s_0)}\\
\text{(\ref{pully})}&=t^{\lambda_1}t^{\lambda_2}t^{\eta_1} t^{\eta_2}x_{m,n}*y_{ra+ps_0,rb+qs_0} \otimes t^{-2(s-s_0)}
\end{align*}
where
\begin{align*}
\lambda_1&=-2[(a+p(s-s_0))((r-1)b+qs_0)-((r-1)a+ps_0)(b+q(s-s_0))]\\
\lambda_2&=2[(a)((r-1)b+qs_0)-b((r-1)a+ps_0)]\\
\eta_1&=2(m)(br+qs)\\
\eta_2&=-2(m)(rb+qs_0).
\end{align*}
and thus we see that 
$$\eta= \lambda_1+\lambda_2+\eta_1+\eta_2.$$
It should be noted that for our purposes, having the explicit values of the above constants are not of importance.
\end{proof}

\begin{lemma}
We have an alternate form of the Frohman-Gelca Formula \ref{FGform},
\begin{align}
\label{FGadj}   
(m,n)_T*(r,s)_T=(m+r,n+s)_T+\wedge^{(m,n)}*(r-m,s-n)_T.
\end{align}
\end{lemma}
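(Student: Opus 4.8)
The plan is to derive the identity directly from the original Frohman--Gelca formula (\ref{FGform}), which already gives
$$(m,n)_T*(r,s)_T=(m+r,n+s)_T+(m-r,n-s)_T*\wedge^{(r,s)}.$$
Since the leading term $(m+r,n+s)_T$ is identical in both formulas, it suffices to prove the single identity
$$(m-r,n-s)_T*\wedge^{(r,s)}=\wedge^{(m,n)}*(r-m,s-n)_T.$$
Everything then reduces to reconciling these two ``second terms'' using only the commutation, orientation-reversal, and wedge-combination relations (\ref{switch}), (\ref{revorien}), (\ref{wedgecombinesplit}). Note in particular that no topological input is needed beyond the presentation of $G_t(\BT)$; this is purely an algebraic rearrangement.

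The main work is to start from the right-hand side $\wedge^{(m,n)}*(r-m,s-n)_T$ and transform it into the left-hand side in three steps. First I would apply the commutation relation (\ref{switch}) to move the wedge factor past $(r-m,s-n)_T$, producing a scalar and the expression $(r-m,s-n)_T*\wedge^{(m,n)}$. Second, I would rewrite $(r-m,s-n)_T$ using the orientation-reversal relation (\ref{revorien}), which expresses it as $(m-r,n-s)_T*\wedge^{(r-m,s-n)}$; this is the key move, since it replaces the negated index vector by the one appearing in Frohman--Gelca while leaving an extra wedge factor to be absorbed. Third, I would merge the two now-adjacent wedges $\wedge^{(r-m,s-n)}*\wedge^{(m,n)}$ via (\ref{wedgecombinesplit}) into a scalar multiple of $\wedge^{(r,s)}$. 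The scalar factors from the first and third steps are designed to cancel, leaving exactly $(m-r,n-s)_T*\wedge^{(r,s)}$, which is the second term of (\ref{FGform}).

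The only real obstacle is the bookkeeping of the $t$-exponents: at each step one must evaluate a symplectic-type pairing of the form $2(ad-bc)$ and verify the exponents cancel. Concretely, checking that the commutation step contributes $t^{2(ms-nr)}$ while the wedge-combination step contributes $t^{-2(ms-nr)}$ is a short but careful computation, after which the two powers annihilate each other. Once this cancellation is confirmed, substituting the resulting identity into (\ref{FGform}) yields the stated alternate form, completing the proof.
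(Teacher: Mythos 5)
Your proposal is correct and is essentially the paper's own argument run in reverse: the paper transforms $(m-r,n-s)_T*\wedge^{(r,s)}$ into $\wedge^{(m,n)}*(r-m,s-n)_T$ using (\ref{switch}), (\ref{wedgecombinesplit}), and (\ref{revorien}) with the same cancelling exponents $t^{\pm 2(ms-nr)}$, while you go from the latter term to the former with the inverse sequence of the same three relations. No substantive difference.
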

\begin{proof}
Manipulating the Frohman-Gelca Formula \ref{FGform}, we see that
\begin{align*}
(m+r,n+s)_T &=(m,n)_T*(r,s)_T-(m-r,n-s)_T*\wedge^{(r,s)}\\
(\ref{switch})&=(m,n)_T*(r,s)_T-t^{2((m-r)s-(n-s)r)}\wedge^{(r,s)}*(m-r,n-s)_T\\
&=(m,n)_T*(r,s)_T-t^{2(ms-nr)}\wedge^{(r,s)}*(m-r,n-s)_T\\
(\ref{wedgecombinesplit})&=(m,n)_T*(r,s)_T\\
&\hspace{6mm}-t^{2(ms-nr)}t^{-2(m(s-n)-n(r-m))}[\wedge^{(m,n)}*\wedge^{(r-m,s-n)}]*(m-r,n-s)_T\\
&=(m,n)_T*(r,s)_T\\
&\hspace{6mm}-t^{2(ms-nr)}t^{-2(ms-nr)}[\wedge^{(m,n)}*\wedge^{(r-m,s-n)}]*(m-r,n-s)_T\\
&=(m,n)_T*(r,s)_T-[\wedge^{(m,n)}*\wedge^{(r-m,s-n)}]*(m-r,n-s)_T\\
&=(m,n)_T*(r,s)_T-\wedge^{(m,n)}*[\wedge^{(r-m,s-n)}*(m-r,n-s)_T]\\
(\ref{revorien})&=(m,n)_T*(r,s)_T-\wedge^{(m,n)}*(r-m,s-n)_T.
\end{align*}
\end{proof}

\subsection{Proof of Lemma \ref{mainlemma}}
With this we can now prove Lemma \ref{mainlemma}. Which says that for every $m,k,r,s\in \BZ$, one has $$x_{am+kp,bm+kq}*y_{ar+ps,br+qs}\otimes 1 \in \Tilde{V},$$
where
$$\Tilde{V}:=\left\{(i)*\wedge^{(j)} \hspace{2mm}|\hspace{2mm} -\left\lfloor{\frac{p}{2}}\right \rfloor\leq i,j\leq \left\lfloor{\frac{p}{2}}\right \rfloor\right\}$$
as a $\BC(t)$-vector space.
\begin{proof}
We will use induction on $m$.\\

(Base case $m=0:$)
Note that we have
\begin{align*}
x_{am+kp,bm+kq}*y_{ar+ps,br+qs}\otimes 1 &=x_{kp,kq}*y_{ar+ps,br+qs}\otimes 1\\
(\ref{pullx})&=t^{\eta_1}(kp,kq)_T\cdot y_{ar+ps,br+qs}\otimes 1\\
(\ref{mainbalancetens})&=t^{\eta_1}y_{ar+ps,br+qs}\otimes(0,k)_T\cdot1\\
(\ref{m0initform})&=t^{\eta_1}y_{ar+ps,br+qs}\otimes t^k+t^{-k}\\
&=(t^k+t^{-k})t^{\eta_1}(y_{ar+ps,br+qs}\otimes 1)
\end{align*}
where 
$\eta_1=2(ar+ps)(kq)$.
With this, we now have to show that
$(y_{ar+ps,br+qs}\otimes 1)\in \tilde{V}$. Note that
\begin{align*}
y_{ar+ps,br+qs}\otimes1 &=\frac{1}{2}x_{0,0}*y_{ar+ps,br+qs}\otimes 1 \\
\text{(\ref{redy})}&=\frac{t^{\eta}}{2}x_{0,0}*y_{ar+ps_0,br+qs_0}\otimes 1 \\
&=t^{\eta}y_{ar+ps_0,br+qs_0}\otimes 1
\end{align*}
Where $\eta$ in the equation above is from Equation \ref{redy}, but is not of importance for our purposes. Now note that Since $s_0$ minimizes $|ar+ps_0|$ and by Lemma \ref{RazvanNumbers} we know that $|ar+ps_0|$ is at most $\left\lfloor{\frac{p}{2}}\right \rfloor$, we thus have $y_{ar+ps_0,br+qs_0}\otimes 1\in \tilde{V}$.\\

(Base Case $m=1:$) We see that
\begin{align*}
x_{am+kp,bm+kq}*y_{ar+ps,br+qs}\otimes1 &=x_{a+kp,b+kq}*y_{ar+ps,br+qs}\otimes 1\\
\text{(\ref{pullx})}&=t^{\eta_2}(a+kp,b+kq)_T\cdot y_{ar+ps,br+qs}\otimes 1\\
\text{(\ref{reducingab})}&=t^{\eta_2}(a,b)_T\cdot y_{ar+ps,br+qs}\otimes t^k\\
\text{(\ref{pullx})}&=t^{\eta_2}t^{\eta_3}x_{a,b}*y_{ar+ps,br+qs}\otimes t^k\\
&=t^{\eta_2}t^{\eta_3}t^k(x_{a,b}*y_{ar+ps,br+qs}\otimes 1)\\
(\text{\ref{redy}})&=t^{\eta_2}t^{\eta_3}t^kt^{\eta_4}(x_{a,b}*y_{ar+ps_0,br+qs_0}\otimes 1),
\end{align*}
where
\begin{align*}
\eta_2&=2(ar+ps)(b+kq)\\
\eta_3&=2(ar+ps)(b)
\end{align*}
and $\eta_4$ is the constant from Lemma \ref{lredy}. Note that $a,b$ are from the gluing matrix of the lens space. And one can choose to have $a\leq \left\lfloor{\frac{p}{2}}\right \rfloor$, and hence we see that  $x_{a,b}*y_{ar+ps_0,br+qs_0}\otimes 1\in \tilde{V}$.\\

(Induction Hypothesis:) Suppose that we have the theorem true for $m-1$ and $m-2$, where $m>1$. Now let $k_0$ be an integer that minimizes $am+k_0p$ while keeping the expression positive. Now note that as a pair integers we have
$$[am+kp,bm+kq]=\Big[a+(k-k_0)p,b+(k-k_0)q\Big]+\Big[a(m-1)+k_0p,b(m-1)+k_0q\Big],$$
which then tells us that
\begin{align*}
(ma+kp,mb+kq)_T&=\Big(a+(k-k_0)p,b+(k-k_0)q\Big)_T*\Big((m-1)a+k_0p,(m-1)b+k_0q\Big)_T\\
&-\wedge^{\big(a+(k-k_0)p,b+(k-k_0)q\big)}*\Big((m-2)a-(k-2k_0)p,(m-2)b-(k-2k_0)q\Big)_T.  
\end{align*}
We see that this allows us to express terms of $m$ with terms of $m-1$ and $m-2$ which are given in the context of our induction hypothesis. 
With this, we then see that
\begin{align*}
& x_{am+kp,bm+kq}*y_{ar+ps,br+qs}\otimes 1 =  t^{\eta}(am+kp,bm+kq)_T\cdot y_{ar+ps,br+qs}\otimes 1 \quad (\ref{pullx})\\
&=t^{\eta}(a+(k-k_0)p,b+(k-k_0)q)_T*((m-1)a+k_0p,(m-1)b+k_0q)_T \cdot y_{ar+ps,br+qs}\otimes 1\\
&-t^{\eta}\wedge^{(a+(k-k_0)p,b+(k-k_0)q)}*((m-2)a-(k-2k_0)p,(m-2)b-(k-2k_0)q)_T  \cdot y_{ar+ps,br+qs}\otimes 1.
\end{align*}
Where $\eta$ in the equation above is from Equation \ref{pullx}, but is not important for purposes.\\

Let us now look at the two terms separately and show that they are both in $\tilde{V}$. We first do the easier term, namely we see that
\begin{align*}
&-\wedge^{(a+(k-k_0)p,b+(k-k_0)q)}*((m-2)a-(k-2k_0)p,(m-2)b-(k-2k_0)q)_T  \cdot y_{ar+ps,br+qs}\otimes 1\\
&=-\wedge^{(a+(k-k_0)p,b+(k-k_0)q)}\cdot[((m-2)a-(k-2k_0)p,(m-2)b-(k-2k_0)q)_T  \cdot y_{ar+ps,br+qs}]\otimes 1\\
(\ref{reducingab})
&=\wedge^{(a,b)}\cdot[((m-2)a-(k-2k_0)p,(m-2)b-(k-2k_0)q)_T]  \cdot y_{ar+ps,br+qs}\otimes t^{-2(k-k_0)}\\
&=[\wedge^{(a,b)}*((m-2)a-(k-2k_0)p,(m-2)b-(k-2k_0)q)_T]  \cdot y_{ar+ps,br+qs}\otimes t^{-2(k-k_0)}\\
(\ref{reducingab})
&=[\wedge^{(a,b)}*((m-2)a-(k-2k_0)p,(m-2)b-(k-2k_0)q)_T]  \cdot [\wedge^{(ar+ps,br+qs)}\cdot 1] \otimes t^{-2(k-k_0)}\\
&=\wedge^{(a,b)}*[((m-2)a-(k-2k_0)p,(m-2)b-(k-2k_0)q)_T * \wedge^{(ar+ps,br+qs)}]\cdot 1 \otimes t^{-2(k-k_0)}\\
(\ref{switch})
&=\wedge^{(a,b)}*[t^{\eta_5}\wedge^{(ar+ps,br+qs)}*((m-2)a-(k-2k_0)p,(m-2)b-(k-2k_0)q)_T  ]\cdot 1 \otimes t^{-2(k-k_0)}\\
(\ref{wedgecombinesplit})
&=[t^{\eta_5}t^{\eta_6}\wedge^{(a(r+1)+ps,b(r+1)+qs)}]\cdot x_{(m-2)a-(k-2k_0)p,(m-2)b-(k-2k_0)q}  \otimes t^{-2(k-k_0)}\\
(\ref{pully})
&=t^{\eta_5}t^{\eta_6}t^{\eta_7}y_{(a(r+1)+ps,b(r+1)+qs)}* x_{(m-2)a-(k-2k_0)p,(m-2)b-(k-2k_0)q}  \otimes t^{-2(k-k_0)}\\
(\ref{redy})&=t^{-2(k-k_0)}t^{\eta_5}t^{\eta_6}t^{\eta_7} t^{\eta_8}( x_{(m-2)a-(k-2k_0)p,(m-2)b-(k-2k_0)q}*y_{(a(r+1)+ps_1,b(r+1)+qs_1)}  \otimes 1)\in \tilde{V}
\end{align*}
where
\begin{align*}
\eta_5&=2(((m-2)a-(k-2k_0)p)(br+qs)-((m-2)b-(k-2k_0)q)(ar+ps))\\  
\eta_6&=2((a)(br+qs)-(b)(ar+ps))\\    
\eta_7&=-2((m-2)a-(k-k_0)p)(b(r+1)+qs),
\end{align*}
and $\eta_8$ is the constant from Lemma $\ref{lredy}$.

Now looking at the first term, we see that
\begin{align*}
&(a+(k-k_0)p,b+(k-k_0)q)_T*((m-1)a+k_0p,(m-1)b+k_0q)_T \cdot y_{ar+ps,br+qs}\otimes 1\\
&=(a+(k-k_0)p,b+(k-k_0)q)_T\cdot[((m-1)a+k_0p,(m-1)b+k_0q)_T \cdot y_{ar+ps,br+qs}]\otimes 1\\
\text{(\ref{reducingab})}&=(a,b)_T\cdot[((m-1)a+k_0p,(m-1)b+k_0q)_T \cdot y_{ar+ps,br+qs}]\otimes t^{k-k_0}\\
&=[(a,b)_T*((m-1)a+k_0p,(m-1)b+k_0q)_T]\cdot y_{ar+ps,br+qs}\otimes t^{k-k_0}\\
\text{(\ref{FGadj})}&=\left[(ma+k_0p,mb+k_0q)_T+\wedge^{(a,b)}*((m-2)a+k_0p,(m-2)b+k_0q)_T\right]\cdot y_{ar+ps,br+qs}\otimes t^{k-k_0}\\
&=(ma+k_0p,mb+k_0q)_T\cdot y_{ar+ps,br+qs}\otimes t^{k-k_0}\\
&\hspace{4mm}+\wedge^{(a,b)}*((m-2)a+k_0p,(m-2)b+k_0q)_T\cdot y_{ar+ps,br+qs}\otimes t^{k-k_0}\\
\text{(\ref{pullx})}&=t^{\gamma_1}x_{ma+k_0p,mb+k_0q}*y_{ar+ps,br+qs}\otimes t^{k-k_0}\\
&\hspace{4mm}+[\wedge^{(a,b)}*((m-2)a+k_0p,(m-2)b+k_0q)_T]* \wedge^{(ar+ps,br+qs)}\cdot 1\otimes t^{k-k_0}\\
\text{(\ref{switch})}&=t^{\gamma_1}x_{ma+k_0p,mb+k_0q}*y_{ar+ps,br+qs}\otimes t^{k-k_0}\\
&\hspace{4mm}+\wedge^{(a,b)}*[t^{\gamma_2}\wedge^{(ar+ps,br+qs)}*((m-2)a+k_0p,(m-2)b+k_0q)_T] \cdot 1\otimes t^{k-k_0}\\
&=t^{\gamma_1}x_{ma+k_0p,mb+k_0q}*y_{ar+ps,br+qs}\otimes t^{k-k_0}\\
&\hspace{4mm}+[\wedge^{(a,b)}*t^{\gamma_2}\wedge^{(ar+ps,br+qs)}]\cdot x_{(m-2)a+k_0p,(m-2)b+k_0q} \otimes t^{k-k_0}\\
\text{(\ref{wedgecombinesplit})}&=t^{\gamma_1}x_{ma+k_0p,mb+k_0q}*y_{ar+ps,br+qs}\otimes t^{k-k_0}\\
&\hspace{4mm}+t^{\gamma_2}t^{\gamma_3}\wedge^{(a(r+1)+ps,b(r+1)+qs)}\cdot x_{(m-2)a+k_0p,(m-2)b+k_0q} \otimes t^{k-k_0}\\
\text{(\ref{pully})}&=t^{\gamma_1}x_{ma+k_0p,mb+k_0q}*y_{ar+ps,br+qs}\otimes t^{k-k_0}\\
&\hspace{4mm}+t^{\gamma_2}t^{\gamma_3}t^{\gamma_4}y_{a(r+1)+ps,b(r+1)+qs}*x_{(m-2)a+k_0p,(m-2)b+k_0q} \otimes t^{k-k_0}\\
\text{(\ref{redy})}&=t^{\gamma_1}t^{\gamma_{5}}x_{ma+k_0p,mb+k_0q}*y_{ar+ps_0,br+qs_0}\otimes t^{k-k_0}\\
&\hspace{4mm}+t^{\gamma_2}t^{\gamma_3}t^{\gamma_4}t^{\gamma_{6}}x_{(m-2)a+k_0p,(m-2)b+k_0q}*y_{a(r+1)+ps_1,b(r+1)+qs_1} \otimes t^{k-k_0}\\
&=t^{\gamma_1}t^{\gamma_{5}}t^{k-k_0}(x_{ma+k_0p,mb+k_0q}*y_{ar+ps_0,br+qs_0}\otimes 1)\\
&\hspace{4mm}+t^{\gamma_2}t^{\gamma_3}t^{\gamma_4}t^{\gamma_{6}}t^{k-k_0}(x_{(m-2)a+k_0p,(m-2)b+k_0q}*y_{a(r+1)+ps_1,b(r+1)+qs_1} \otimes 1)\in \tilde{V}
\end{align*}
where
\begin{align*}
\gamma_1&=-2(ar+ps)(mb+k_0q)\\
\gamma_2&=2[((m-2)a+k_0p)(br+qs)-((m-2)b+k_0q)(ar+ps)]\\
\gamma_3&=2[(a)(br+qs)-(b)(ar+ps)]\\
\gamma_4&=-2((m-2)a+k_0p)(b(r+s)+qs)
\end{align*}
and $\gamma_5,\gamma_6$ are the respective constants from Lemma \ref{lredy}. Again it should be noted that for our purposes the actual values of the constants are not of importance.

One similarly uses induction for the case $m<-1$.
\end{proof}

\subsection{Lemmas Used to Prove Lemma \ref{lexpression}}
\begin{lemma}\label{coprime}
Every integer $n$ can be written as $ma+kp$ for some $m,k\in \BZ$. Where $a,p$ are from the gluing matrix
$$\begin{pmatrix}
a & p\\
b & q
\end{pmatrix}$$
from the Heegaard splitting of the lens space being studied.
\end{lemma}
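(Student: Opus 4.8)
The plan is to reduce the statement to an elementary consequence of B\'ezout's identity, with the only geometric input being the determinant condition on the gluing matrix. Recall from the discussion following Equation \ref{gluematrix} that the gluing map $\tilde{f}$ is orientation reversing, so the matrix $\begin{pmatrix} a & p \\ b & q \end{pmatrix}$ has determinant $-1$, i.e. $aq - bp = -1$.

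The first step is to observe that this determinant relation forces $\gcd(a,p) = 1$. Indeed, if $d \in \BZ$ divides both $a$ and $p$, then $d$ divides the integer combination $aq - bp = -1$, so $d = \pm 1$; hence $a$ and $p$ are coprime.

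Next I would invoke B\'ezout's identity: since $\gcd(a,p) = 1$, there exist integers $m_0, k_0$ with $m_0 a + k_0 p = 1$. Then for an arbitrary $n \in \BZ$, setting $m = n m_0$ and $k = n k_0$ gives
$$
ma + kp = n(m_0 a + k_0 p) = n,
$$
which is exactly the desired expression.

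I do not expect any real obstacle here: the content of the lemma is entirely captured by the coprimality of $a$ and $p$, which in turn is immediate from the $\det = -1$ condition already recorded in the paper. The only point to be careful about is citing that determinant fact correctly, since it is what guarantees $\gcd(a,p)=1$ rather than some larger common divisor (which would obstruct representing every integer).
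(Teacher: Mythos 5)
Your proposal is correct and follows essentially the same route as the paper: both extract $aq-bp=-1$ from the orientation-reversing gluing condition, deduce $\gcd(a,p)=1$, and then solve $ma+kp=n$ via B\'ezout. If anything, your version is marginally more streamlined, since the relation $a(-q)+pb=1$ already exhibits an explicit B\'ezout pair that can be scaled by $n$ directly, rather than passing through the general solvability criterion for linear Diophantine equations as the paper does.
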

\begin{proof}
Note 
$$
Det
\begin{pmatrix}
a & p\\
b & q
\end{pmatrix}=-1
$$
gives us that
$aq-bp=-1$ and in particular $a(-q)+p(b)=1$.
Now note the well known result that for any two integers $l$ and $j$, there exist integers $x$ and $y$ such that 
$$gcd(l,j)=l(x)+j(y).$$
This result then tells us that $gcd(a,p)=1$. This then lets us write $am+pk=n$ as a linear Diophantine equation with given integers $a,p,n$. It is a well known result that this linear Diophantine equation has solution if and only if $n$ is multiple of $gcd(a,p)$. Hence, in our case of $gcd(a,p)=1$, we have that there exists solutions $m,k\in \BZ$ so that we have $n=ma+kp.$
\end{proof}

\begin{lemma}\label{Highestpower}
For any two integers $m,n$ we have that
\begin{align*}
x_{m,n}&=\sum a_{ij}(i)*\wedge^{(j)}
\end{align*}
where $a_{ij}\in \BC(t)$, $i$ is bounded by $-m$ and $m$, and the summation over $j$ is finite.

In particular we have,
\begin{align*}
(m)&=x_{m,n}+\sum a'_{kl}(k)*\wedge^{(l)}
\end{align*}
where k is strictly bounded by $-m$ and $m$, $a'_{kl}\in\BC(t)$, and the summation over $l$ is finite.
\end{lemma}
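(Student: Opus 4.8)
The plan is to induct on $|m|$, using the recursion of Lemma \ref{recx} to pass from $x_{m-1,n},x_{m,n}$ to $x_{m+1,n}$, with base cases supplied by Lemmas \ref{m0init} and \ref{initialx}. The whole argument rests on first understanding how the two operators appearing in that recursion, namely $(1,0)_T\cdot$ and $\wedge^{(1,0)}\cdot$, act on a standard solid-torus skein $(i)*\wedge^{(j)}$, and in particular on its longitudinal index $i$.

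So first I would record the action of these two operators. Since neither $(1,0)_T=(1,0)$ nor $\wedge^{(1,0)}$ has a meridional component, Equation \ref{longprojez} lets me replace the action by multiplication after projection: $(1,0)_T\cdot U=\pi((1,0)_T)*U$ and $\wedge^{(1,0)}\cdot U=\pi(\wedge^{(1,0)})*U$. By Lemma \ref{initialx} one has $\pi((1,0)_T)=x_{1,0}=(1)$, and by Lemma \ref{projwedge} one has $\pi(\wedge^{(1,0)})=\wedge^{(1)}$. Combining with the solid-torus fusion relations \ref{solidezrelations} gives the two clean rules
\[
(1,0)_T\cdot\big[(i)*\wedge^{(j)}\big]=(i+1)*\wedge^{(j)},\qquad
\wedge^{(1,0)}\cdot\big[(i)*\wedge^{(j)}\big]=(i)*\wedge^{(j+1)}.
\]
Thus the first operator raises the longitudinal index by one and the second leaves it fixed.

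With these rules the induction is immediate. The base cases $m=0,1$ are handled by Lemmas \ref{m0init} and \ref{initialx}: $x_{0,n}=t^n+t^{-n}$ is a scalar (so supported at $i=0$) and $x_{1,n}=t^n(1)$ is supported at $i=1$; I would also record $x_{2,n}$, either by direct computation or by checking that the derivation of Lemma \ref{recx} remains valid at $m=1$. For the inductive step, applying the two rules to $x_{m+1,n}=(1,0)_T\cdot x_{m,n}-\wedge^{(1,0)}\cdot x_{m-1,n}$ shows that the first term is a combination of $(i)*\wedge^{(j)}$ with $|i|\le m+1$ and the second a combination with $|i|\le m-1$; hence $x_{m,n}$ is supported in indices $|i|\le m$, which is the first assertion. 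The case $m<0$ is symmetric, using the second line of Lemma \ref{recx} and $x_{-1,n}=t^{-n}(-1)$.

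To get the ``in particular'' statement I would run the same induction while tracking the top term. Since $(1,0)_T\cdot$ sends the unique top-index term $t^n(m)$ (the one with $i=m$, $j=0$) of $x_{m,n}$ to $t^n(m+1)$, while $\wedge^{(1,0)}\cdot x_{m-1,n}$ contributes only terms with $i\le m-1$, the $i=m$ term of $x_{m,n}$ is $t^n(m)$ for every $m\ge1$. Solving for $(m)$ then yields $(m)=t^{-n}x_{m,n}+\sum a'_{kl}(k)*\wedge^{(l)}$ with $|k|<m$, as claimed. \textbf{The main point to watch} is precisely this leading coefficient: it is the invertible scalar $t^{n}$ rather than $1$, so the displayed identity must be read up to that unit (equivalently, the coefficient of $x_{m,n}$ is $t^{-n}$); the only remaining care is the bookkeeping at the degenerate end $m=0$, where $x_{0,n}$ is a scalar multiple of the empty skein, i.e. the $i=0$ term.
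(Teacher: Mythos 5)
Your proposal is correct and follows essentially the same route as the paper: induction via the recursion of Lemma \ref{recx} with base cases from Lemmas \ref{m0init} and \ref{initialx}, driven by the observations that $(1,0)_T\cdot$ shifts the longitudinal index up by one while $\wedge^{(1,0)}\cdot$ leaves it fixed. Your extra bookkeeping of the leading coefficient $t^{n}$ is a sound refinement --- the paper's displayed identity $(m)=x_{m,n}+\sum a'_{kl}(k)*\wedge^{(l)}$ is literally valid only after rescaling by the unit $t^{-n}$, which the paper silently absorbs into the ($n$-dependent) coefficients, and this is harmless for the way the lemma is later used.
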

\begin{proof}
Note that Lemma \ref{recx} allows us to view $x_{m,n}$ recursively, namely we have
\begin{align*}
x_{m+1,n}&=(1,0)_T\cdot x_{m,n}-\wedge^{(1,0)}\cdot x_{m-1,n},\\
x_{-m-1,n}&=(-1,0)_T\cdot x_{-m,n}-\wedge^{(-1,0)}\cdot x_{-m+1,n},
\end{align*}
and from Lemma \ref{initialx} and \ref{m0init}, we have the following initial conditions
\begin{align*}
x_{0,k}&=t^{k}+t^{-k}
&
x_{1,k}&=t^k(1)\\
x_{0,-k}&=t^k+t^{-k}
&
x_{-1,k}&=t^{-k}(-1).
\end{align*}

Note that 
$$(1,0)_T\cdot (k)*(l)=(k+1)*(l).$$
With this we see that the number $k$ in the context of $(k)$ in the terms of $x_{m,n}$ is bounded by $-m$ and $m$. And that the greatest numbered term in the expression $x_{m,n}$ is $(m)$. This is from the way $x_{m,n}$ is recursively defined. We also see from this recursion relation that super scripts of the $\wedge^{(j)}$ terms in the expression $x_{m,n}$ are finite.

Hence
\begin{align*}
x_{m,n}&=\sum a_{ij}(i)*\wedge^{(j)}
\end{align*}
with $i$ bounded by $m$ and $-m$, $a_{ij}\in \BC(t)$, and the summation over $j$ is finite.

Now we can isolate the $(m)$ term in the above expression, since $\wedge^{(k)}$ and $a_{ij}$ are invertible elements. And thus have
\begin{align*}
(m)&=x_{m,n}+\sum a'_{kl}(k)*\wedge^{(l)}
\end{align*}
where k is strictly bounded by $-m$ and $m$, $a'_{kl}\in\BC(t)$, and the summation over $l$ is finite.
\end{proof}

It should be noted that in the equation
$$(m)=x_{m,n}+\sum a_{kl}'(k)*(l),$$
we have that $n$ is arbitrary and that the explicit value of $a_{kl}'$ depends on $n$. For our purposes we do not need to explicitly know how $a_{kl}'$ depends on $n$.


\subsection{Proof of Lemma \ref{lexpression}}
We are now ready to prove Lemma \ref{lexpression}. Which says that for any $n_1,n_2\in \BZ$ there exists $m,k,r,s\in \BZ$ such that we have
\begin{align*}
(n_1)*\wedge^{(n_2)}\otimes 1&=x_{ma+kp,mb+kq}*y_{ra+ps,br+qs}\otimes c 
+ \sum a_{ij}(i)*\wedge^{(j)} \otimes 1
\end{align*}
where $c,a_{ij}\in \BC(t)$,
$i$ is strictly bounded by $-n_1$ and $n_1$, and $\sum a_{ij}$ is a finite sum.
\begin{proof}
First note that
$$\wedge^{(ra+sp)}\otimes 1= y_{ra+sp,br+qs}\otimes t^{-2(ra+sp)(br+qs)}.$$
With this we see that
\begin{align*}
(n_1)*\wedge^{(n_2)}\otimes 1&=(ma+kp)*\wedge^{(ra+sp)}\otimes 1 \quad \text{(Lemma \ref{coprime})}\\
\text{(Lemma \ref{Highestpower})}&=x_{ma+kp,mb+kq}*y_{ra+ps,br+qs}\otimes c + \sum a_{ij}(i)*\wedge^{(j)} \otimes 1
\end{align*}
where $c,a_{ij}\in \BC(t)$ and $|i|$ is strictly bounded by $|ma+kp|$.

It should be noted that a lot of the scalars such as $t^{-2(ra+sp)(br+qs)}$ stated at the beginning are implicitly written in $a_{ij}$.
\end{proof}

\subsection{Proof of Theorem \ref{lenthm}}
We are now ready to prove Theorem \ref{lenthm}. Which says that for the lens space $L(p,q)$, we have that $G_t(L(p,q))$ is spanned by the elements
$$\{(n)*\wedge^{(m)}\otimes 1\},$$
where $n\in\{0,\dots,\left\lfloor{\frac{p}{2}}\right \rfloor\}$ and $m\in \{-\left\lfloor{\frac{p}{2}}\right \rfloor,\dots ,\left\lfloor{\frac{p}{2}}\right \rfloor\}$ with $\left\lfloor{\frac{p}{2}}\right \rfloor$ meaning the greatest integer less than or equal to $\frac{p}{2}$.
\begin{proof}
From Lemma \ref{lexpression} we see that
\begin{align*}
(n_1)*\wedge^{(n_2)}\otimes 1&=x_{ma+kp,mb+kq}*y_{ra+ps,br+qs}\otimes c + \sum a_{ij}(i)*\wedge^{(j)} \otimes 1
\end{align*}
where $i$ is strictly bounded by $n_1$ and $-n_1$. Hence by applying Lemma \ref{mainlemma} to the term $x_{ma+kp,mb+kq}*y_{ra+ps,br+qs}$ and repeating the process on the other terms of $\sum a_{ij}(i)*\wedge^{(j)}\otimes 1$ we have that $$(n_1)*\wedge^{(n_2)}\otimes 1\in \Tilde{V}$$
for every $n_1,n_2\in \BZ$, where
$\Tilde{V}=span\{(i)*\wedge^{(j)}\otimes 1\}$ as a $\BC(t)$-vector space and $i,j\in \{-\left\lfloor{\frac{p}{2}}\right \rfloor, \dots, \left\lfloor{\frac{p}{2}}\right \rfloor\}$.\\

We now show that we can reduce the range of $i$ in the context of $(i)$ even further. Let
$$V:=span\{(n)*\wedge^{(m)}\otimes 1\}$$
as a $\BC(t)$-vector space where $n\in\{0,\dots,\left\lfloor{\frac{p}{2}}\right \rfloor\}$ and $m\in \{-\left\lfloor{\frac{p}{2}}\right \rfloor,\dots ,\left\lfloor{\frac{p}{2}}\right \rfloor\}$.

Now we will reduce the range on $n_1$ for elements $(n_1)*\wedge^{(n_2)}\otimes 1\in \Tilde{V}$ . Note for $k\in \{-\left\lfloor{\frac{p}{2}}\right \rfloor, \dots, 0\}$ we have that 
\begin{align*}
(k)*\wedge^{(n_2)}\otimes 1&= (-k)*\wedge^{(n_2+k)}\otimes 1.
\end{align*}
In the case that $n_2+k$ is not bounded by $-\left\lfloor{\frac{p}{2}}\right \rfloor$ and $\left\lfloor{\frac{p}{2}}\right \rfloor$ one can reduce the number $n_2+k$, in the context  of $\wedge^{(n_2+k)}$, using Lemma \ref{lredy}.\\

With this we now have that
$$(n_1)*\wedge^{(n_2)}\otimes 1\in V,$$
for all $n_1,n_2\in \BZ$.
\end{proof}


\begin{thebibliography}{99999}

\baselineskip15pt

\bibitem[BBJ18]{BBJ18} Ben‐Zvi, David, Adrien Brochier, and David Jordan. "Integrating quantum groups over surfaces." Journal of Topology 11.4 (2018): 874-917.

\bibitem[Bou89]{Bou89} Bourbaki, Nicolas, Algebra I Chapters 1-3 Reprint of the 1989 English translation Elements
of Mathematics (Berlin). Springer-Verlag, Berlin, 1998. xxiv+709 pp. ISBN: 3-540-64243-9

\bibitem[Coo19]{Coo19} Cooke, Juliet; Excision of Skein Categories and Factorisation Homology, arXiv:1910.02630

\bibitem[FG00]{FG00} Frohman, Charles; Gelca, Răzvan Skein modules and the noncommutative torus. Trans. Amer. Math. Soc. 352 (2000), no. 10, 4877–4888.

\bibitem[FS20]{FS20} Frohman, Charles, and Adam S. Sikora. "SU (3)-skein algebras and webs on surfaces." arXiv preprint arXiv:2002.08151 (2020).

\bibitem[GS99]{GS99} Gompf, Robert E., and András I. Stipsicz. 4-manifolds and Kirby calculus. No. 20. American Mathematical Soc., 1999.

\bibitem[Hom20]{Hom20} Hom, Jennifer; Lecture notes on Heegaard Floer Homology, arXiv:2008.01836

\bibitem[Mcl04]{Mcl04} McLendon, Michael, Detecting torsion in skein modules using Hochschild homology. J. Knot Theory Ramifications 15 (2006), no. 2, 259–277.

\bibitem[Prz91]{Prz91} J. H. Przytycki. “Skein modules of 3-manifolds”. Bull. Polish Acad. Sci. Math. 39.1-2 (1991),

\bibitem[Prz99]{Prz99} Przytycki, Józef H. (1-GWU)
Fundamentals of Kauffman bracket skein modules. (English summary)
Kobe J. Math. 16 (1999), no. 1, 45–66.
57M25

\bibitem[QW18]{QW18} Queffelec, Hoel; Wedrich, Paul;
Khovanov homology and categorification of skein modules.
Quantum Topol. 12 (2021), no. 1, 129–209.
57K18 (57K10)

\bibitem[Sch14]{Sch14} Schultens, Jennifer. Introduction to 3-manifolds. Vol. 151. American Mathematical Soc., 2014.

\bibitem[Tur88]{Tur88} V. G. Turaev. “The Conway and Kauffman modules of a solid torus”. Zap. Nauchn. Sem. Leningrad. Otdel. Mat. Inst. Steklov. (LOMI) 167.Issled. Topol. 6 (1988), pp. 79–89, 190.


\end{thebibliography}
\end{document}